\newcommand{\NN}{\mathbb{N}}
\newcommand{\ZZ}{\mathbb{Z}}
\newcommand{\RR}{\mathbb{R}}
\newcommand{\CC}{\mathbb{C}}
\newcommand{\Span}{\mathrm{span}\,}
\newcommand{\rank}{\mathrm{rank\,}}
\newlength{\wdth}
\newtheorem{theorem}{Theorem}[section]
\newtheorem{lemma}[theorem]{Lemma}
\newtheorem{corollary}[theorem]{Corollary}
\newtheorem{proposition}[theorem]{Proposition}
\newtheorem{definition}[theorem]{Definition}
\newtheorem{remark}[theorem]{Remark}
\def\ps@pprintTitle{%
     \let\@oddhead\@empty
     \let\@evenhead\@empty
     \let\@evenfoot\@oddfoot}
\title{Orthogonal Laurent polynomials of two real variables}
\author{Ruymán Cruz-Barroso and Lidia Fern\'andez}
\address{Ruym\'an Cruz-Barroso\\
Department of Mathematical Analysis and Instituto de Matem\'aticas y sus Aplicaciones (IMAULL) \\
La Laguna University \\
38271 La Laguna,Tenerife. Canary Islands. Spain.}
\email{rcruzb@ull.es}
\address{Lidia Fern\'andez\\
IMAG and Departamento de Matem\'atica Aplicada\\
Universidad de Granada\\
18071, Granada, Spain.}
\email{lidiafr@ugr.es}
\thanks{The work of the first author was partially supported by IMAG-Mar\'ia de Maeztu grant CEX2020-001105-M.
}
\date{\today}
\subjclass[2010]{33C45, 42C05}
\keywords{Orthogonal Laurent polynomials of two real variables, balanced ordering, recurrence relations, Christoffel-Darboux and confluent formula, Favard's theorem.}
\begin{document}

\maketitle

\begin{abstract}
In this paper we consider an appropriate ordering of the Laurent monomials $x^{i}y^{j}$, $i,j \in \ZZ$ that allows us to study sequences of orthogonal Laurent polynomials of the real variables $x$ and $y$ with respect to a positive Borel measure $\mu$ defined on $\RR^2$ such that $\{ x=0 \}\cup \{ y=0 \} \not\in \textrm{supp}(\mu)$. This ordering is suitable for considering the {\em multiplication plus inverse multiplication operator} on each varibale $\left( x+\frac{1}{x}\right.$ and $\left. y+\frac{1}{y}\right)$, and as a result we obtain five-term recurrence relations, Christoffel-Darboux and confluent formulas for the reproducing kernel and a related Favard's theorem. A connection with the one variable case is also presented, along with some applications for future research.
\end{abstract}


\section{Introduction}\label{secIntro}

Orthogonal Laurent polynomials with respect to a positive Borel measure supported on the real line were introduced for the first time in \cite{JT1}, and also implicitly in \cite{JTW} in relation to continued fractions and the solution of the strong Stieltjes moment problem (see also chronologically \cite{OT1,JNT1,OT2,SCop,JO1}). However, it is convenient to say that they can be viewed as orthogonal polynomials with respect to varying weights, which have been extensively used in the theory of constructive approximation (see for example \cite{HvR} where they appear in the context of continuous T-fractions). The situation when the measure is supported on the unit circle is very much simplified, see e.g. \cite{RCB1}. An extensive bibliography has been produced after these works, giving rise to a theory close to the well known theory of orthogonal polynomials on the real line (see e.g. \cite{But,CDM1,CDM2,RCB5,CDM3,RCB3}), with applications in moment problems, recurrence relations, reproducing kernels, Favard's theorem, interpolation and quadrature formulas along with denseness and convergence, linear algebra and inverse eigenvalue problems, Krylov methods, model reduction, linear prediction, system identification,$\ldots$. This theory has been also considered for positive Borel measures supported on the unit circle (for the first time \cite{Thr}), giving rise in particular to the well known CMV theory (\cite{CMV1}, see also e.g. \cite{RCB1,RCB2}) that has produced an important impulse in the theory of orthogonal polynomials on the unit circle (see \cite{Simon}). In particular, in the context of quadrature formulas on the real line, the advantages of considering rules based on Laurent polynomials instead of ordinary polynomials have been shown deeply in the literature, theoretically and numerically. For example, from a theoretical point of view, the theory of Orthogonal Laurent Polynomials has allowed the development of the Theory of Strong Moment Problems (see Subsection \ref{Appl2} further). From a numerical point of view, a comparison between quadrature formulas on the real half-line based on orthogonal polynomials and orthogonal Laurent polynomials is carried out in \cite{RCB5}, and it is shown there that when the integrand presents singularities near the subset of integration, the results of the classical Gaussian quadrature rules are strongly improved when quadrature formulas based on Laurent polynomials are used.

In the one variable case, there are few weight functions on the real line that give rise to explicit expressions for the corresponding orthogonal Laurent polynomials. In practice these orthogonal Laurent polynomials are computed recursively from a three-term recurrence relation that holds for an arbitrarily ordered sequence of monomials $\{ x^{i} \}_{i \in \ZZ}$ (induced from what is known in the literature as ``generating sequence"). In the particular case of the ``balanced" ordering
\begin{equation}\label{balanced1v}
\mathscr{L}=\Span \left\{ 1, x, \frac{1}{x}, x^2, \frac{1}{x^2}, \ldots \right\},
\end{equation} this recurrence is given by the following (see \cite{CDM2,CDM3})
\begin{theorem}\label{recu1var}
Let $\omega$ be a positive Borel measure on $\RR^{+}$ and let $\{ \psi_k \}_{k \geq 0}$ be the sequence of orthonormal Laurent polynomials induced by the inner product $\langle f,g \rangle_{\omega}=\int_0^{\infty} f(x)g(x)d\omega(x)$ and the balanced ordering (\ref{balanced1v}). Then, setting $\psi_{-1} \equiv 0$, there exist two sequences of positive real numbers $\{ \Omega_n \}_{n \geq 0}$ and $\{ C_n \}_{n \geq 0}$ such that for all $n\geq 0$,
$$\begin{array}{cclcl}C_n\psi_{n+1}(x)&=&\left( \Omega_nx-1 \right)\psi_n(x)-C_{n-1}\psi_{n-1}(x) &\quad& \textrm{if n is even}, \\
C_n\psi_{n+1}(x)&=&\left( 1-\frac{\Omega_n}{x} \right)\psi_n(x)-C_{n-1}\psi_{n-1}(x) &\quad& \textrm{if n is odd}. \end{array}$$
Furthermore, $\psi_0 \equiv \frac{1}{\sqrt{m_0}}$, $\Omega_0=\frac{m_0}{m_1}$ and $C_0=\frac{\sqrt{m_2m_0-m_1^2}}{m_1}$, $m_k=\langle x^k,1 \rangle$ being corresponding moments for $\omega$, $k\in \ZZ$.
\end{theorem}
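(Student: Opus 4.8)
The plan is to extract the recurrence from two structural properties of the balanced ordering together with a normalization trick, and to control the signs by a running induction on $n$. Write $\mathscr{L}_n=\Span\{L_0,\dots,L_n\}$ with $L_0=1$, $L_{2j-1}=x^{j}$, $L_{2j}=x^{-j}$ for $j\ge1$, so that $\mathscr{L}_{2k}=\Span\{x^{-k},\dots,x^{k}\}$ and $\mathscr{L}_{2k-1}=\Span\{x^{-(k-1)},\dots,x^{k}\}$. Since $\{x=0\}\notin\supp(\omega)$, all moments $m_i=\langle x^{i},1\rangle$ are finite and the Gram matrix of $L_0,\dots,L_n$ is positive definite, so $\psi_n$ exists and is unique once its leading coefficient $a_n$ (that of $L_n$) is declared positive; let $b_n$ denote the coefficient of $L_{n-1}$ in $\psi_n$, i.e. the coefficient of the ``other extreme'' monomial ($x^{k}$ if $n=2k$, $x^{-(k-1)}$ if $n=2k-1$). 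The only inclusions I will use are the immediate ones $x\,\mathscr{L}_{2k-2}\subseteq\mathscr{L}_{2k-1}$, $x\,\mathscr{L}_{2k}\subseteq\mathscr{L}_{2k+1}$, $x^{-1}\mathscr{L}_{2k-3}\subseteq\mathscr{L}_{2k-2}$ and $x^{-1}\mathscr{L}_{2k-1}\subseteq\mathscr{L}_{2k}$.

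Step two is the heart of the matter. For $n=2k$ one has $x\psi_{2k}\in\mathscr{L}_{2k+1}$, and for every $q\in\mathscr{L}_{2k-2}$, $\langle x\psi_{2k},q\rangle=\langle\psi_{2k},xq\rangle=0$ because $xq\in\mathscr{L}_{2k-1}$ and $\psi_{2k}\perp\mathscr{L}_{2k-1}$; hence $x\psi_{2k}\in\Span\{\psi_{2k-1},\psi_{2k},\psi_{2k+1}\}$. The $\psi_{2k}$-coefficient of this expansion is $\langle x\psi_{2k},\psi_{2k}\rangle=\int x\,\psi_{2k}^{2}\,d\omega>0$, precisely because $\supp(\omega)\subset\RR^{+}$; calling its reciprocal $\Omega_{2k}>0$ and moving the $\psi_{2k}$ term to the left turns the expansion into $(\Omega_{2k}x-1)\psi_{2k}=C_{2k}\psi_{2k+1}+C_{2k-1}\psi_{2k-1}$, i.e. the asserted even relation. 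The odd case $n=2k-1$ is identical with $x$ and $x^{-1}$ interchanged: $x^{-1}\psi_{2k-1}\in\mathscr{L}_{2k}$, $\langle x^{-1}\psi_{2k-1},q\rangle=0$ for $q\in\mathscr{L}_{2k-3}$, the $\psi_{2k-1}$-coefficient is $\langle x^{-1}\psi_{2k-1},\psi_{2k-1}\rangle>0$ with reciprocal $\Omega_{2k-1}>0$, and one gets $(1-\Omega_{2k-1}x^{-1})\psi_{2k-1}=C_{2k-1}\psi_{2k}+C_{2k-2}\psi_{2k-2}$. So far the $C_n$ are merely real expansion coefficients and $\Omega_n>0$.

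The delicate point — and the step I expect to be the main obstacle — is showing that the $C_n$ are positive and that the coefficient called ``$C_{n-1}$'' coming out of step $n$ is the same as the one produced at step $n-1$ (equivalently, $\langle(\Omega_n x+\Omega_{n-1}x^{-1})\psi_{n-1},\psi_n\rangle=0$ for every $n\ge1$), together with the non-degeneracy $b_n\ne0$. All of this I would get from a short bookkeeping with the extreme coefficients. From the Gram--Schmidt expansion $L_n=\sum_{j\le n}\langle L_n,\psi_j\rangle\psi_j$ one reads $\langle L_n,\psi_n\rangle=1/a_n$, and comparing $L_{n-1}$-coefficients (only $\psi_{n-1}$ among $\psi_0,\dots,\psi_{n-1}$ involves $L_{n-1}$) gives the identity $b_n=-a_n a_{n-1}\langle L_n,\psi_{n-1}\rangle$. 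Peeling the extreme monomial ($x^{k+1}$ or $x^{-k}$) off each of $x\psi_{2k}$, $x\psi_{2k-1}$, $x^{-1}\psi_{2k-1}$ — the remainder always lands in some $\mathscr{L}_m$ orthogonal to the factor it is paired with — and inserting those two identities produces closed formulas such as $\langle x\psi_{2k},\psi_{2k}\rangle=-\,b_{2k}b_{2k+1}/(a_{2k}a_{2k+1})$, $\langle x^{-1}\psi_{2k-1},\psi_{2k-1}\rangle=-\,b_{2k-1}b_{2k}/(a_{2k-1}a_{2k})$, $\langle x\psi_{2k},\psi_{2k-1}\rangle=-\,a_{2k-1}b_{2k+1}/(a_{2k}a_{2k+1})$, $\langle x\psi_{2k},\psi_{2k+1}\rangle=b_{2k}/a_{2k+1}$, $\langle x^{-1}\psi_{2k-1},\psi_{2k}\rangle=b_{2k-1}/a_{2k}$. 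Positivity of the two diagonal ones (again because $\supp(\omega)\subset\RR^{+}$) forces consecutive $b_n$ to have opposite signs; since $\psi_1=a_1(x-m_1/m_0)$ has $b_1=-a_1 m_1/m_0<0$, it follows that $b_{2j}>0$ and $b_{2j+1}<0$ for all $j$, in particular $b_n\ne0$. Substituting into $C_{2k}=\Omega_{2k}\langle x\psi_{2k},\psi_{2k+1}\rangle$ and $C_{2k-1}=\Omega_{2k}\langle x\psi_{2k},\psi_{2k-1}\rangle$ gives $C_{2k}=-a_{2k}/b_{2k+1}>0$ and $C_{2k-1}=a_{2k-1}/b_{2k}>0$, and this same value $a_{2k-1}/b_{2k}$ is exactly what step $2k-1$ returns as the coefficient of $\psi_{2k}$ in $(1-\Omega_{2k-1}x^{-1})\psi_{2k-1}$; the analogous check matches the two copies of each $C_n$, which is the required consistency.

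It remains to record the base case $n=0$, which is direct. One takes $\psi_0\equiv m_0^{-1/2}$; imposing $\langle\psi_1,\psi_0\rangle=0$ on $C_0\psi_1=(\Omega_0 x-1)\psi_0$ yields $\Omega_0 m_1-m_0=0$, hence $\Omega_0=m_0/m_1$, and $\|\psi_1\|=1$ then gives $C_0^{2}m_1^{2}=m_0 m_2-m_1^{2}$, i.e. $C_0=\sqrt{m_2 m_0-m_1^{2}}/m_1$ (real and positive because the $2\times2$ moment matrix is positive definite). The whole argument can in fact be packaged as a single induction on $n$ that produces $\psi_{n+1}$, $\Omega_n$ and $C_n$ and simultaneously verifies orthonormality, positivity of $\Omega_n$ and $C_n$, the sign of $b_{n+1}$ and the recurrence, the only input at each step being the relation established at step $n-1$.
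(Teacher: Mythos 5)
The paper itself contains no proof of Theorem~\ref{recu1var}: it is stated in the Introduction as known background, quoted from the references it cites, so there is no in-paper argument to compare yours against. Your proof is correct and follows the standard route for the balanced ordering: the shift inclusions $x\,\mathscr{L}_{2k-2}\subseteq\mathscr{L}_{2k-1}$ and $x^{-1}\mathscr{L}_{2k-3}\subseteq\mathscr{L}_{2k-2}$ reduce $x\psi_{2k}$ (resp.\ $x^{-1}\psi_{2k-1}$) to a combination of three consecutive $\psi_j$; positivity of $\int x\,\psi_{2k}^2\,d\omega$ and $\int x^{-1}\psi_{2k-1}^2\,d\omega$ on $\RR^{+}$ gives $\Omega_n>0$; and the extreme-coefficient identities $\langle L_n,\psi_n\rangle=1/a_n$ and $b_n=-a_na_{n-1}\langle L_n,\psi_{n-1}\rangle$ deliver the alternating signs of $b_n$, the positivity of $C_n$, and the agreement of the two occurrences of each $C_n$. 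I verified all five closed formulas for the inner products and the resulting values $C_{2k}=-a_{2k}/b_{2k+1}$, $C_{2k-1}=a_{2k-1}/b_{2k}$; they are right, and the base case reproduces $\Omega_0=m_0/m_1$ and $C_0=\sqrt{m_0m_2-m_1^2}/m_1$. Two small points to make explicit in a write-up: the finiteness of all moments $m_k$, $k\in\ZZ$, is a standing hypothesis (it does not follow merely from $0\notin\supp(\omega)$), and the ``analogous check'' that the $\psi_{2k-2}$-coefficient produced at the odd step $n=2k-1$ equals $-a_{2k-2}/b_{2k-1}$, matching the even step $n=2k-2$, is the other half of the consistency claim and deserves to be displayed rather than omitted.
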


On the other hand, the general theory of multivariate orthogonal polynomials is still far from being considered an established field and has experienced delayed development, especially in fundamental aspects. In 1865, C. Hermite \cite{Her65} explored a two-variable generalization of Legendre polynomials, marking the initial appearance of orthogonal polynomial families in multiple variables in the literature. However, it was not until 1926 that a study on families of orthogonal polynomials in two variables on the unit disk and the triangle appeared in the classic monograph by Appell and Kampé de Fériet \cite{AKF26}. Since that moment, various authors have contributed to the development of the general theory of polynomials in several variables; see, for example, \cite{Ko75, KS67, Su99}.

Based on a vectorial representation, M. Kowalski (\cite{Ko82a, Ko82b}) proposed a novel approach in the study of polynomials in multiple variables. This perspective has allowed the development of a basic algebraic theory, which can be found in the monograph by C. F. Dunkl and Y. Xu (\cite{Xu}). In particular, it has been possible to extend fundamental properties to multiple variables, such as the three-term relation, Favard's theorem or the Christoffel-Darboux formula. The monograph \cite{Xu} comes highly recommended as reference for gaining insight into the current state of the art in multivariate orthogonal polynomials.

Orthogonal polynomials in several variables find diverse applications across fields like physics, quantum mechanics, and signal processing. One prominent application lies in optics and ophthalmology. Zernike polynomials are orthogonal polynomials on the unit disk \cite{Zer34} and were introduced by Fritz Zernike (Nobel prize for physics in 1953) in 1934 to address optical challenges related to telescopes and microscopes. In the year 2000, the Optical Society of America adopted them as the standard pattern in ophthalmic optics.

The purpose of this paper is to consider for the first time in the literature (as far as we know) the theory of sequences of orthogonal Laurent polynomials in several real variables. The advantages of considering orthogonal Laurent polynomials (or more generally, orthogonal rational functions) instead of ordinary orthogonal polynomials have been shown in a wide variety of contexts in the literature of the one variable case. The growing interest in the study of orthogonal polynomials in several variables undoubtedly motivates to consider generalizations to more general kind of functions than ordinary polynomials, mainly due to their possible applications in many problems like cubature rules, Fourier orthogonal series and summability of orthogonal expansions, moment problems,$\ldots$

In \cite{AM22}, multivariate orthogonal Laurent polynomials in the unit torus are studied. The authors provide an ordering for the monomials, focusing specifically on the moment matrix, Fourier series, Christoffel-Darboux formulas and related concepts. They work with complex variables and they prove three-term relations but they do not find a Favard's type theorem. The study explores also Christoffel-type perturbations of the measure through multiplication by Laurent polynomials. Both discrete and continuous deformations of the measure result in a Toda-type integrable hierarchy. The variables in this paper are real and the ordering considered is not the same that the one used in \cite{AM22} so the results obtained here are not included in it.

For simplicity, we will restrict to the case of two real variables, but all the results can be extended to more variables by using a somewhat more involved notation. Here, the basic key is to start from an appropriate ordering for the Laurent monomials $x^{i}y^{j}$, $i,j \in \ZZ$ that is inspired on the ``balanced case" (that is usually considered in the literature), but now for both real variables simultaneously. The vectorial representation of the Laurent polynomials is necessary for the proof of the main results.

The paper has been organized as follows. An appropriate ordering of the Laurent monomials $x^{i}y^{j}$, $i,j \in \ZZ$ for the construction of Laurent polynomials sequences of two real variables with respect to a linear functional is considered in Section 2. We concentrate in the positive-definite case, dealing with orthogonality with respect to a positive Borel measure $\mu$ defined on $\RR^2$ such that $\{ x=0 \}\cup \{ y=0 \} \not\in \textrm{supp}(\mu)$. Five-term recurrence relations are obtained involving multiplication by $x+\frac{1}{x}$ and $y+\frac{1}{y}$. In Section 3 we deduce a Favard's theorem and Christoffel-Darboux and confluent formulas for the reproducing kernel, whereas in Section 4 we present a connection with the one variable case when $\mu$ is supported in a rectangle and it is of the form $d\mu(x,y)=d\mu_1(x)d\mu_2(y)$. Some conclusions are finally carried out.

We end this introduction with some remaining notation throughout the paper. We denote by $E[\cdot]$ the integer part function, by $\delta_{k,l}$ the Kronecker delta symbol, by ${\mathcal M}_{n,m}$ the space of (real) matrices of dimension $n \times m$, ${\mathcal M}_{n}$ being the space of square (real) matrices of dimension $n$, by ${\mathcal I}_n$ the identity matrix of dimension $n$, by ${\mathcal O}_{n,m}$ and ${\mathcal O}_{n}$ the zero matrices in ${\mathcal M}_{n,m}$ and ${\mathcal M}_n$, respectively, and by $\textrm{diag}(a_1,\ldots,a_n)\in {\mathcal M}_n$ the diagonal matrix with ordered entries in the main diagonal $a_1,\ldots,a_n$.

\section{Orthogonal Laurent polynomials of two real variables. Five-term relations}\label{SecOLP2VRR}

In the one-variable situation it is usual to consider a nested sequence of subspaces of Laurent polynomials $\left\{ \mathscr{L}_n \right\}$ such that $\mathscr{L}_0=\Span \{ 1 \}$, $\mathscr{L}_n \subset \mathscr{L}_{n+1}$, $\textrm{dim} \left( \mathscr{L}_{n} \right)=n+1$ for all $n \geq 0$, and $\bigcup_{n \geq 0} \mathscr{L}_n = \mathscr{L}$. See e.g. \cite{CDM2,CDM4}.

Having in mind the ``balanced" ordering (\ref{balanced1v}) in the one variable situation
$$\mathscr{L}_0=\Span \{ 1 \}, \quad \mathscr{L}_{2k}=\Span \left\{ \frac{1}{x^k},\ldots,x^k \right\}, \quad \mathscr{L}_{2k-1}=\mathscr{L}_{2k-2} \oplus \Span \{ x^{k} \}, \quad \forall k\geq 1,$$
(see e.g. \cite{SCop,RCB5,CDM3,JNT1} for the real line case, and \cite{Thr,Simon,CMV1,RCB2} for the unit circle case), we can proceed by defining the sequence
\begin{equation}\label{defc}
c_n=(-1)^{n+1} \cdot E \left[ \frac{n+1}{2} \right], \quad \forall n \geq 0
\end{equation}
and considering the Laurent monomials
$$p_{m,n}(x,y)=x^{c_m}y^{c_n}, \quad \forall m,n \geq 0$$
and the infinite matrix
\begin{equation}\label{matrixL}
\begin{array}{cccccccc}
p_{0,0}=x^{c_0}y^{c_0}=1 & \; & p_{1,0}=x^{c_1}y^{c_0}=x & \; & p_{2,0}=x^{c_2}y^{c_0}=\frac{1}{x} & \; & p_{3,0}=x^{c_3}y^{c_0}=x^2 & \cdots
\\ \\
p_{0,1}=x^{c_0}y^{c_1}=y & \; & p_{1,1}=x^{c_1}y^{c_1}=xy & \; & p_{2,1}=x^{c_2}y^{c_1}=\frac{y}{x} & \; & p_{3,1}=x^{c_3}y^{c_1}=x^2y & \cdots
\\ \\
p_{0,2}=x^{c_0}y^{c_2}=\frac{1}{y} & \; & p_{1,2}=x^{c_1}y^{c_2}=\frac{x}{y} & \; & p_{2,2}=x^{c_2}y^{c_2}=\frac{1}{xy} & \; & p_{3,2}=x^{c_3}y^{c_2}=\frac{x^2}{y} & \cdots \\ \vdots & & \vdots & & \vdots & & \vdots & \ddots
\end{array}
\end{equation}

Setting ${\mathcal L}=\Span \{ x^{i}y^{j} : i,j\in \ZZ \}$, the space of Laurent polynomials of real variables $x$ and $y$, we can order these elements $p_{n,m}$ by anti-diagonals in (\ref{matrixL}) as
\begin{equation}\label{L} {\mathcal L} = \Span \{ \underbrace{p_{0,0}}_{n+m=0}, \underbrace{p_{1,0}, p_{0,1}}_{n+m=1}, \underbrace{p_{2,0}, p_{1,1}, p_{0,2}}_{n+m=2}, \underbrace{p_{3,0}, p_{2,1}, p_{1,2}, p_{0,3}}_{n+m=3}, \underbrace{p_{4,0}, p_{3,1}, p_{2,2}, p_{1,3}, p_{0,4}}_{n+m=4}, \cdots \; \}\end{equation}
and define
\begin{equation}\label{Ln}
{\mathcal L}_n = \Span \left\{ p_{i,j} : i+j \leq n \right\}, \quad\textrm{for all} \;n\geq 0, \quad \quad \textrm{dim}\left( {\mathcal L}_n \right)=\frac{(n+1)(n+2)}{2}, \quad \quad {\mathcal L}=\bigcup_{n \geq 0} {\mathcal L}_n.
\end{equation}

Consider
\begin{equation}\label{phik}\phi_k(x,y)=\left( \begin{array}{c} p_{k,0}(x,y) \\ p_{k-1,1}(x,y) \\ \vdots \\ p_{0,k}(x,y) \end{array} \right) \in {\mathcal M}_{k+1,1}, \quad \forall \;k \geq 0,
\end{equation} that is, the components of the vector $\phi_k$ are the $k+1$ linearly independent Laurent monomials of ${\mathcal L}_k \backslash {\mathcal L}_{k-1}$, ordered as they appear in the expansion (\ref{L}). So, we can interpret ${\mathcal L}_{n} = \Span \{ \phi_0, \ldots,\phi_n \}$, for all $n\geq 0$ so that
if $\psi_k \in {\mathcal L}_{k}$ for some $k \geq 0$, then $\psi_k = \sum_{l=0}^{k} C_l\phi_l$ where $C_l \in {\mathcal M}_{1,l+1}$ are constant matrices ($C_k$ being the leading coefficient matrix).

Observe that $\phi_0 \equiv 1$ and for all $l \geq 1$,
$$\phi_{2l}(x,y)=\left( \begin{array}{c} x^{-l}y^0 \\ x^ly \\ x^{-(l-1)}y^{-1} \\ \vdots \\ xy^l \\ x^0y^{-l} \end{array} \right), \quad \quad \phi_{2l-1}(x,y)=\left( \begin{array}{c} x^{l}y^0 \\ x^{-(l-1)}y \\ x^{l-1}y^{-1} \\ \vdots \\ xy^{-(l-1)} \\ x^0y^{l} \end{array} \right).$$
Thus,
\begin{equation}\label{mult1}
\begin{array}{ccc}
x\phi_{2l}(x,y)= \left( \begin{array}{c} p_{2l-2,0} \in {\mathcal L}_{2l-2} \backslash {\mathcal L}_{2l-3} \\ p_{2l+1,1} \in {\mathcal L}_{2l+2} \backslash {\mathcal L}_{2l+1}  \\ p_{2l-4,2} \in {\mathcal L}_{2l-2} \backslash {\mathcal L}_{2l-3}  \\ \vdots \\ p_{0,2l-2} \in {\mathcal L}_{2l-2} \backslash {\mathcal L}_{2l-3}  \\ p_{3,2l-1} \in {\mathcal L}_{2l+2} \backslash {\mathcal L}_{2l+1}  \\ p_{1,2l} \in {\mathcal L}_{2l+1} \backslash {\mathcal L}_{2l}\end{array} \right), &\quad &
\frac{1}{x}\phi_{2l}(x,y)=\left( \begin{array}{c} p_{2l+2,0} \in {\mathcal L}_{2l+2} \backslash {\mathcal L}_{2l+1} \\ p_{2l-3,1} \in {\mathcal L}_{2l-2} \backslash {\mathcal L}_{2l-3}  \\ p_{2l,2} \in {\mathcal L}_{2l+2} \backslash {\mathcal L}_{2l+1}  \\ \vdots \\ p_{4,2l-2} \in {\mathcal L}_{2l+2} \backslash {\mathcal L}_{2l+1} \\ p_{0,2l-1} \in {\mathcal L}_{2l-1} \backslash {\mathcal L}_{2l-2} \\ p_{2,2l} \in {\mathcal L}_{2l+2} \backslash {\mathcal L}_{2l+1}\end{array} \right),
\end{array}\end{equation}

\begin{equation}\label{mult2}\begin{array}{ccc}
x\phi_{2l-1}(x,y)= \left( \begin{array}{c} p_{2l+1,0} \in {\mathcal L}_{2l+1} \backslash {\mathcal L}_{2l} \\ p_{2l-4,1} \in {\mathcal L}_{2l-3} \backslash {\mathcal L}_{2l-4}  \\ p_{2l-1,2} \in {\mathcal L}_{2l+1} \backslash {\mathcal L}_{2l}  \\ \vdots \\ p_{0,2l-3} \in {\mathcal L}_{2l-3} \backslash {\mathcal L}_{2l-4}  \\ p_{3,2l-2} \in {\mathcal L}_{2l+1} \backslash {\mathcal L}_{2l}  \\ p_{1,2l-1} \in {\mathcal L}_{2l} \backslash {\mathcal L}_{2l-1}\end{array} \right), &\quad &
\frac{1}{x}\phi_{2l-1}(x,y)= \left( \begin{array}{c} p_{2l-3,0} \in {\mathcal L}_{2l-3} \backslash {\mathcal L}_{2l-4} \\ p_{2l,1} \in {\mathcal L}_{2l+1} \backslash {\mathcal L}_{2l}  \\ p_{2l-5,2} \in {\mathcal L}_{2l-3} \backslash {\mathcal L}_{2l-4}  \\ \vdots \\ p_{4,2l-3} \in {\mathcal L}_{2l+1} \backslash {\mathcal L}_{2l} \\ p_{0,2l-2} \in {\mathcal L}_{2l-2} \backslash {\mathcal L}_{2l-3} \\ p_{2,2l-1} \in {\mathcal L}_{2l+1} \backslash {\mathcal L}_{2l}\end{array} \right),
\end{array}\end{equation}

\begin{equation}\label{mult3}
\begin{array}{ccc}
y\phi_{2l}(x,y)= \left( \begin{array}{c} p_{2l,1} \in {\mathcal L}_{2l+1} \backslash {\mathcal L}_{2l} \\ p_{2l-1,3} \in {\mathcal L}_{2l+2} \backslash {\mathcal L}_{2l+1}  \\ p_{2l-2,0} \in {\mathcal L}_{2l-2} \backslash {\mathcal L}_{2l-3}  \\ \vdots \\ p_{2,2l-4} \in {\mathcal L}_{2l-2} \backslash {\mathcal L}_{2l-3}  \\ p_{1,2l+1} \in {\mathcal L}_{2l+2} \backslash {\mathcal L}_{2l+1}  \\ p_{0,2l-2} \in {\mathcal L}_{2l-2} \backslash {\mathcal L}_{2l-3}\end{array} \right), &\quad &
\frac{1}{y}\phi_{2l}(x,y)=\left( \begin{array}{c} p_{2l,2} \in {\mathcal L}_{2l+2} \backslash {\mathcal L}_{2l+1} \\ p_{2l-1,0} \in {\mathcal L}_{2l+1} \backslash {\mathcal L}_{2l}  \\ p_{2l-2,4} \in {\mathcal L}_{2l+2} \backslash {\mathcal L}_{2l+1}  \\ \vdots \\ p_{2,2l} \in {\mathcal L}_{2l+2} \backslash {\mathcal L}_{2l+1} \\ p_{1,2l-3} \in {\mathcal L}_{2l-2} \backslash {\mathcal L}_{2l-3} \\ p_{0,2l+2} \in {\mathcal L}_{2l+2} \backslash {\mathcal L}_{2l+1}\end{array} \right),
\end{array}\end{equation}
and
\begin{equation}\label{mult4}\begin{array}{ccc}
y\phi_{2l-1}(x,y)= \left( \begin{array}{c} p_{2l-1,1} \in {\mathcal L}_{2l} \backslash {\mathcal L}_{2l-1} \\ p_{2l-2,3} \in {\mathcal L}_{2l+1} \backslash {\mathcal L}_{2l}\\ p_{2l-3,0} \in {\mathcal L}_{2l-3} \backslash {\mathcal L}_{2l-4}  \\ \vdots \\ p_{2,2l-1} \in {\mathcal L}_{2l+1} \backslash {\mathcal L}_{2l}\\ p_{1,2l-4} \in {\mathcal L}_{2l-3} \backslash {\mathcal L}_{2l-4}  \\ p_{0,2l+1} \in {\mathcal L}_{2l+1} \backslash {\mathcal L}_{2l}\end{array} \right), &\quad &
\frac{1}{y}\phi_{2l-1}(x,y)= \left( \begin{array}{c} p_{2l-1,2} \in {\mathcal L}_{2l+1} \backslash {\mathcal L}_{2l} \\ p_{2l-2,0} \in {\mathcal L}_{2l-2} \backslash {\mathcal L}_{2l-3}  \\ p_{2l-3,4} \in {\mathcal L}_{2l+1} \backslash {\mathcal L}_{2l}  \\ \vdots \\ p_{2,2l-5} \in {\mathcal L}_{2l-3} \backslash {\mathcal L}_{2l-4} \\ p_{1,2l} \in {\mathcal L}_{2l+1} \backslash {\mathcal L}_{2l} \\ p_{0,2l-3} \in {\mathcal L}_{2l-3} \backslash {\mathcal L}_{2l-4}\end{array} \right).
\end{array}\end{equation}

It follows from (\ref{mult1})-(\ref{mult2}) and (\ref{mult3})-(\ref{mult4}) that if $\psi_k \in {\mathcal L}_k \backslash {\mathcal L}_{k-1}$, then $x\psi_k \in {\mathcal L}_{k+2}$, $\frac{1}{x}\psi_k \in {\mathcal L}_{k+2}$ and $y\psi_k \in {\mathcal L}_{k+2}$, $\frac{1}{y}\psi_k \in {\mathcal L}_{k+2}$, respectively. However, the key fact in what follows is that all the components of the vectors $\left(x+\frac{1}{x}\right)\psi_k$ and $\left(y+\frac{1}{y}\right)\psi_k$ are in ${\mathcal L}_{k+2} \backslash {\mathcal L}_{k+1}$.

A \emph{Laurent system} in two variables $\{\varphi_n\}_{n\ge 0}$ is a sequence of vectors of increasing size
$$
\varphi_n \in {\mathcal M}_{n+1,1}, \quad \varphi_n \in {\mathcal L}_n \backslash {\mathcal L}_{n-1}, \quad \forall \;n \geq 0
$$
such that the components in the vector $\varphi_n$ are linearly independent. It is clear that in this case
\begin{equation}\label{varphisexpand}
\varphi_{n}=\sum_{i=0}^{n}A_{i}^{(n)}\phi_i,  \quad \text{ with } A_{i}^{(n)}\in {\mathcal M}_{n+1,i+1} \text{ constant matrices, } A_{n}^{(n)} \text{ being regular. }
\end{equation}

Let us consider a linear functional $L$ defined in ${\mathcal L}$ by $L(x^i y^j)=\mu_{i,j}$ for $i,j\in\mathbb{Z}$ and extended by linearity. It can be defined over product of vectors in the following way

\begin{equation}\label{innervector}
L(f\,  g^T)= \left(L(f_i \, g_j) \right)_{i=1,\ldots,k; \; j=1,\ldots,m} \in {\mathcal M}_{k,m}, \;\; \textrm{where} \; f=[f_1,\ldots,f_k]^T \;\; \textrm{and} \;\; g=[g_1,\ldots,g_m]^T.
\end{equation}

\begin{definition}
A Laurent system $\{\varphi_n\}_{n\ge 0}$ is a \emph{system of orthogonal Laurent polynomials} with respect to the linear functional $L$ if for all $n\ge 0$
\begin{equation}\label{orth_cond_1}
\begin{aligned}
&L(\varphi_n \varphi_k^T) = {\mathcal O}_{n+1,k+1}, \quad k=0,\dots,n-1 \\
&L(\varphi_n \varphi_n^T) = {\mathcal H}_{n}\in {\mathcal M}_{n+1} \quad \text{with } {\mathcal H}_{n} \text{ an invertible matrix.}
\end{aligned}
\end{equation}
In the case when ${\mathcal H}_{n}={\mathcal I}_{n+1}$ for all $n\ge 0$, $\{\varphi_n\}_{n\ge 0}$ is called a \emph{system of orthonormal Laurent polynomials}.
\end{definition}
Observe that the orthogonality conditions are equivalent to
\begin{equation}\label{orth_cond_2}
\begin{aligned}
&L(\phi_k \varphi_n ^T) = {\mathcal O}_{k+1,n+1}, \quad k=0,\dots,n-1 \\
&L(\phi_n \varphi_n^T) = {\mathcal S}_{n}\in {\mathcal M}_{n+1} \quad \text{with } {\mathcal S}_{n} \text{ an invertible matrix.}
\end{aligned}
\end{equation}

For $n\ge 0$, $k,l\ge 0$, we define the matrices
$$
\mathrm{M}_{k,l}=L( \phi_k \, \phi_l^T)
$$
and the matrix
$$
\mathrm{M}_n=\left(\mathrm{M}_{k,l}\right)_{k,l=0}^n \quad \text{with} \quad \Delta_n=\det \mathrm{M}_n.
$$
We call $\mathrm{M}_n$ a \emph{moment matrix}.
Observe that

$$
\Delta_0=\left|\mu_{0,0}\right|, \;\;
\Delta_1=\left|\begin{array}{c|cc}
\mu_{0,0} & \mu_{1,0} & \mu_{0,1} \\
\hline
\mu_{1,0} & \mu_{2,0} & \mu_{1,1} \\
\mu_{0,1} & \mu_{1,1} & \mu_{0,2}
\end{array}\right|, \;\;
\Delta_2=\left|\begin{array}{c|cc|ccc}
\mu_{0,0} & \mu_{1,0} & \mu_{0,1} & \mu_{-1,0} & \mu_{1,1} & \mu_{0,-1} \\
\hline
\mu_{1,0} & \mu_{2,0} & \mu_{1,1} & \mu_{0,0} & \mu_{2,1} & \mu_{1,-1} \\
\mu_{0,1} & \mu_{1,1} & \mu_{0,2} & \mu_{-1,1} & \mu_{1,2} & \mu_{0,0} \\
\hline
\mu_{-1,0} & \mu_{0,0} & \mu_{-1,1} & \mu_{-2,0} & \mu_{0,1} & \mu_{-1,-1} \\
\mu_{1,1} & \mu_{2,1} & \mu_{1,2} & \mu_{0,1} & \mu_{2,2} & \mu_{1,0} \\
\mu_{0,-1} & \mu_{1,-1} & \mu_{0,0} & \mu_{-1,-1} & \mu_{1,0} & \mu_{0,-2} \\
\end{array}\right|,\ldots
$$

\begin{proposition}
A system of orthogonal Laurent polynomials with respect to the linear functional $L$ exists, if and only if, $\Delta_n\ne 0$ for all $n\ge 0$.
\end{proposition}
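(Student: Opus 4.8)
The plan is to mirror the classical argument for orthogonal polynomials in several variables (as in Dunkl–Xu) adapted to the Laurent monomial basis $\{\phi_k\}$. I would work with the equivalent formulation \eqref{orth_cond_2}: finding a system $\{\varphi_n\}$ of orthogonal Laurent polynomials amounts, in view of \eqref{varphisexpand}, to finding for each $n$ constant matrices $A_0^{(n)},\ldots,A_n^{(n)}$ with $A_n^{(n)}$ regular such that $L(\phi_k\varphi_n^T)=\mathcal O_{k+1,n+1}$ for $k=0,\dots,n-1$ and $L(\phi_n\varphi_n^T)=\mathcal S_n$ is invertible. Writing $\varphi_n=\sum_{i=0}^n A_i^{(n)}\phi_i$ and taking $L(\phi_k\,\cdot\,)$ turns these conditions into the block-linear system $\sum_{i=0}^n \mathrm M_{k,i}\,(A_i^{(n)})^T=\mathcal O$ for $k<n$, i.e. the first $n$ block-rows of the moment matrix $\mathrm M_n$ annihilate the stacked unknown $\big((A_0^{(n)})^T,\dots,(A_n^{(n)})^T\big)$.

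For the ``if'' direction, assume $\Delta_n\neq 0$ for all $n\ge 0$. Then every principal block submatrix $\mathrm M_{n-1}$ is invertible, so I can solve, for each fixed column of $A_n^{(n)}$, the system determined by the first $n$ block-rows: given any choice of $A_n^{(n)}$, the blocks $A_0^{(n)},\dots,A_{n-1}^{(n)}$ are uniquely determined by Cramer/Schur-complement arguments since $\mathrm M_{n-1}$ is nonsingular. It then remains to check that $A_n^{(n)}$ can be chosen regular so that $\mathcal S_n=L(\phi_n\varphi_n^T)$ is invertible; the natural choice is to take $\varphi_n$ ``monic'' in the sense $A_n^{(n)}=\mathcal I_{n+1}$ (or, for the orthonormal version, a suitable square root of the Schur complement). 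Here one computes $\mathcal S_n = \mathrm M_{n,n}-(\mathrm M_{n,0},\dots,\mathrm M_{n,n-1})\,\mathrm M_{n-1}^{-1}(\mathrm M_{0,n},\dots,\mathrm M_{n-1,n})^T$, which is exactly the Schur complement of $\mathrm M_{n-1}$ in $\mathrm M_n$; by the determinant identity $\Delta_n=\Delta_{n-1}\det\mathcal S_n$ and the hypothesis $\Delta_n,\Delta_{n-1}\neq0$, the matrix $\mathcal S_n$ is invertible, as required. One should also note the subtlety that $L$ need not be symmetric, so $\mathrm M_n$ need not be symmetric; the argument only uses invertibility of the principal blocks, so this causes no trouble, though one must keep $\phi_k\varphi_n^T$ and $\varphi_n\phi_k^T$ distinct and work consistently with \eqref{orth_cond_2}.

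For the ``only if'' direction, suppose a system $\{\varphi_n\}$ exists. From \eqref{varphisexpand} the change of basis from $(\phi_0,\dots,\phi_n)$ to $(\varphi_0,\dots,\varphi_n)$ is given by a block lower-triangular matrix $\mathcal A_n$ whose diagonal blocks $A_i^{(i)}$ are regular, hence $\mathcal A_n$ is invertible. Then $\mathcal A_n\,\mathrm M_n\,\mathcal A_n^{T}=\big(L(\varphi_k\varphi_l^T)\big)_{k,l=0}^n=\mathrm{diag}(\mathcal H_0,\dots,\mathcal H_n)$ by the orthogonality \eqref{orth_cond_1}, and since each $\mathcal H_k$ is invertible the right-hand side is invertible; therefore $\Delta_n=\det\mathrm M_n\neq 0$. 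I expect the main obstacle to be purely bookkeeping: carefully setting up the block-matrix Schur-complement computation and the determinant factorization $\Delta_n=\Delta_{n-1}\det\mathcal S_n$ with the non-symmetric moment matrix, and making precise in what sense $\varphi_n$ (monic vs. orthonormal) is pinned down — the linear algebra itself is standard once the right identity is written down.
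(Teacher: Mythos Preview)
Your proposal is correct and follows essentially the same route as the paper: both reduce the orthogonality conditions \eqref{orth_cond_2} to the block linear system $\mathrm M_n\,(A_0^T,\dots,A_n^T)^T=(0,\dots,0,\mathcal S_n)^T$ and appeal to invertibility of $\mathrm M_n$. Your version is in fact more complete than the paper's (which only sketches the ``if'' direction and does not explicitly verify that $\mathcal S_n$ and $A_n^{(n)}$ come out invertible, nor argue the converse); your Schur-complement computation and the block change-of-basis argument for ``only if'' fill those gaps cleanly, and your worry about symmetry of $\mathrm M_n$ is unnecessary here since $L(fg)=L(gf)$ for scalar $f,g$ forces $\mathrm M_{k,l}=\mathrm M_{l,k}^T$.
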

\begin{proof}
Using that $\varphi_n=\sum_{i=0}^n A_i \phi_i$ we have
$$
L(\phi_k \varphi_n^T)=\sum_{k=0}^n L(\phi_k \phi_i^T) A_i^T=\sum_{k=0}^n \mathrm{M}_{k,i} A_i^T
$$
The orthogonality conditions \eqref{orth_cond_2} are equivalent to the following linear system of equations:
$$
\mathrm{M}_n \begin{pmatrix} A_0^T \\ \vdots \\ A_{n-1}^T \\ A_n^T \end{pmatrix}= \begin{pmatrix} 0 \\ \vdots \\ 0 \\ {\mathcal S}_{n} \end{pmatrix}
$$
The system has a unique solution if the matrix $\mathrm{M}_n$ is invertible, that is, if $\Delta_n\ne 0$.
\end{proof}

\begin{definition}
A linear functional $L$ defined in ${\mathcal L}$ given by (\ref{L}) is called {\em quasi-definite} if there exists a system of orthogonal Laurent polynomials with respect to $L$. $L$ is {\em positive definite} if it is quasi-definite and $L(\psi^2)>0$, $\forall \psi \in {\mathcal L}$, $\psi \neq 0$.
\end{definition}

\begin{proposition}
If $L$ is a positive definite moment functional then there exists a system of orthonormal Laurent polynomials with respect to $L$.
\end{proposition}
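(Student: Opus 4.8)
The plan is to produce the orthonormal system by normalizing an arbitrary orthogonal system with a matrix factorization of its Gram matrices. First I would note that since $L$ is positive definite it is, by definition, quasi-definite, so a system of orthogonal Laurent polynomials $\{\varphi_n\}_{n\ge 0}$ already exists; it satisfies \eqref{orth_cond_1}, and in particular $\mathcal H_n=L(\varphi_n\varphi_n^T)$ is an invertible matrix in $\mathcal M_{n+1}$ for every $n\ge 0$. The goal is then to replace each $\varphi_n$ by $\widehat\varphi_n=T_n\varphi_n$ for a suitable invertible $T_n\in\mathcal M_{n+1}$ so that $L(\widehat\varphi_n\widehat\varphi_n^T)=\mathcal I_{n+1}$ while keeping the lower-order orthogonality and the Laurent-system structure.

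The main point is to show that $\mathcal H_n$ is symmetric \emph{and positive definite}, not merely invertible. Symmetry is immediate because $L(pq)=L(qp)$ for scalar Laurent polynomials, so the $(i,j)$ and $(j,i)$ entries of $L(\varphi_n\varphi_n^T)$ agree. For positive definiteness I would take $v\in\RR^{n+1}$, $v\neq 0$, set $\psi=v^T\varphi_n$, and observe that $\psi\in\mathcal L$ is not identically zero because the components of $\varphi_n$ are linearly independent; then
$$
v^T\mathcal H_n v=v^T L(\varphi_n\varphi_n^T)v=L\bigl((v^T\varphi_n)(v^T\varphi_n)\bigr)=L(\psi^2)>0
$$
by the definition of positive definiteness. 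The same computation applied to the full vector of monomials $(\phi_0^T,\dots,\phi_n^T)^T$ shows that $\mathrm M_n$ is positive definite and hence $\Delta_n>0$, which is exactly the hypothesis of the preceding proposition; either route yields the orthogonal system to start from.

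Once $\mathcal H_n$ is known to be symmetric positive definite, I would let $\mathcal H_n^{1/2}$ denote its unique symmetric positive definite square root (a lower-triangular Cholesky factor works equally well), write $\mathcal H_n^{-1/2}=(\mathcal H_n^{1/2})^{-1}$, and define $\widehat\varphi_n=\mathcal H_n^{-1/2}\varphi_n$. Writing $\varphi_n=\sum_{i=0}^n A_{i}^{(n)}\phi_i$ as in \eqref{varphisexpand} with $A_{n}^{(n)}$ regular, one gets $\widehat\varphi_n=\sum_{i=0}^n\bigl(\mathcal H_n^{-1/2}A_{i}^{(n)}\bigr)\phi_i$ with leading matrix $\mathcal H_n^{-1/2}A_{n}^{(n)}$ a product of invertible matrices, hence regular; so $\widehat\varphi_n\in\mathcal L_n\setminus\mathcal L_{n-1}$ with linearly independent components, i.e. $\{\widehat\varphi_n\}_{n\ge 0}$ is again a Laurent system. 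Finally, for $0\le k<n$ one has $L(\widehat\varphi_n\widehat\varphi_k^T)=\mathcal H_n^{-1/2}L(\varphi_n\varphi_k^T)(\mathcal H_k^{-1/2})^T=\mathcal O_{n+1,k+1}$ by \eqref{orth_cond_1}, and $L(\widehat\varphi_n\widehat\varphi_n^T)=\mathcal H_n^{-1/2}\mathcal H_n\mathcal H_n^{-1/2}=\mathcal I_{n+1}$, so $\{\widehat\varphi_n\}_{n\ge 0}$ is orthonormal with respect to $L$.

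I expect no serious obstacle here: the only real content is converting the scalar positivity $L(\psi^2)>0$ into positive-definiteness of the Gram matrices $\mathcal H_n$ (equivalently of $\mathrm M_n$), which rests on the linear independence of the components of $\varphi_n$; the subsequent normalization is routine linear algebra, and the verification that the normalized vectors still constitute a Laurent system is immediate from the invertibility of $\mathcal H_n^{-1/2}$.
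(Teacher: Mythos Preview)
Your proposal is correct and follows essentially the same approach as the paper: show that the Gram matrices $\mathcal H_n$ (and/or the moment matrices $\mathrm M_n$) are positive definite via $v^T\mathcal H_n v=L(\psi^2)>0$, then normalize by the inverse square root $\mathcal H_n^{-1/2}$. Your write-up is in fact slightly more careful, explicitly checking symmetry of $\mathcal H_n$ and that the normalized vectors still form a Laurent system, points the paper leaves implicit.
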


\begin{proof}
Suppose that $L$ is positive definite. Let $a=(a_0,\dots,a_n)$, with $a_j\in {\mathcal M}_{1,j+1}$, be an eigenvector of the matrix $\mathrm{M}_n$ corresponding to eigenvalue $\lambda$. Then, on the one hand, $a^T\,\mathrm{M}_n\,a = \lambda \|a\|^2$. On the other hand, $a^T\,\mathrm{M}_n\,a  = L(\psi^2) > 0$, where $\psi = \sum_{j=0}^{n} a_j^T \phi_j$. It follows that $\lambda > 0$. Since all the eigenvalues are positive, $\Delta_n= \det(\mathrm{M}_n)> 0$.

As a consequence, there exists a system $\{\varphi_n\}_n$ of orthogonal Laurent polynomials with respect to $L$ with ${\mathcal H}_n = L(\varphi_n\,\varphi_n^T)$. For any nonzero vector $v$, $\psi = v\varphi_n$ is a nonzero element of ${\mathcal L}_n$. Then, $v \, {\mathcal H}_n\, v^T = L (\psi^2) > 0$, so ${\mathcal H}_n$ is a positive definite matrix. If we define $\tilde{\varphi}_n = ({\mathcal H}_n^{1/2})^{-1}\varphi_n$, then $L(\tilde{\varphi}_n \, \tilde{\varphi}_n^T) = ({\mathcal H}_n^{1/2})^{-1} L(\varphi_n\, \varphi_n^T)({\mathcal H}_n^{1/2})^{-1} = {\mathcal I}_{n+1}$. This proves that $\{\tilde{\varphi}_n\}_n$ is a system of orthonormal Laurent polynomials with respect to $L$.
\end{proof}

From now on, we will deal with a positive Borel measure $\mu(x,y)$ on $\RR^2$ such that $\{ x=0 \}\cup \{ y=0 \} \not\in \textrm{supp}(\mu)=:D$. We consider the induced inner product
\begin{equation}\label{inner}
\langle f,g \rangle_{\mu}=\iint_{D} f(x,y)g(x,y)d\mu(x,y), \quad f,g \in L_2^{\mu}=\left\{ h: \RR^2 \rightarrow \RR \;:\;  \iint_{D} h^2(x,y)d\mu(x,y) < \infty \right\},
\end{equation}
and we assume the existence of the moments
\begin{equation}\label{moments}
\mu_{i,j}=\langle x^{i},y^{j} \rangle_{\mu}, \quad \quad \forall \; i,j \in \ZZ.
\end{equation}

Consider the inner product
\begin{equation}\label{innervector2}
\langle f , g^T \rangle = \left( \langle f_i , g_j \rangle_{\mu} \right)_{i=1,\ldots,k; \; j=1,\ldots,m} \in {\mathcal M}_{k,m}, \quad \textrm{where} \quad f=[f_1,\ldots,f_k]^T\, , \;\; g=[g_1,\ldots,g_m]^T.
\end{equation}

From orthogonalization to ${\mathcal L}_n = \Span \{ \phi_0,\ldots,\phi_n \}$ with respect to the inner product (\ref{innervector2}), for all $n \geq 0$, we can obtain an equivalent system ${\mathcal L}_n = \Span \{ \varphi_0,\ldots,\varphi_n \}$ verifying $\varphi_0 \in {\mathcal L}_0$, $\varphi_l \in {\mathcal L}_l \backslash {\mathcal L}_{l-1}$, $\varphi_l \perp {\mathcal L}_{l-1}$, for all $l=1,\ldots,n$, and $\langle \varphi_k , \varphi_k^T \rangle = {\mathcal I}_{k+1}$, for all $k=0,\ldots,n$. If this procedure is repeated for all $n\geq 0$, we get $\{ \varphi_k \}_{k\geq 0}$, a family of orthonormal Laurent polynomials of two real variables with respect to the measure $\mu$.

\begin{remark}\label{unicity}
Observe that $\varphi_n$ is uniquely determined up to left multiplication by orthogonal matrices. Indeed, if $Q_{n+1}\in {\mathcal M}_{n+1}$ is an orthogonal matrix and $\tilde{\varphi}_n=Q_{n+1}\varphi_n$ then $\tilde{\varphi}_n \perp {\mathcal L}_{n-1}$ and $$\langle \tilde{\varphi}_n, \tilde{\varphi}_n^T \rangle = \langle Q_{n+1}\varphi_n , \varphi_n^T Q_{n+1}^T \rangle = Q_{n+1} \langle \varphi_n , \varphi_n^T \rangle Q_{n+1}^T = {\mathcal I}_{n+1}.$$
\end{remark}

From (\ref{mult1})-(\ref{mult2}) we get
$$\left( x + \frac{1}{x} \right)\phi_n = B_{n+2,1}^{(n)}\phi_{n+2} + B_{n+1,1}^{(n)}\phi_{n+1} + B_{n-1,1}^{(n)}\phi_{n-1} + B_{n-2,1}^{(n)} \phi_{n-2},$$
where by introducing $z_s=\left( 0 \;\; \cdots \;\; 0 \;\;1 \;\; 0 \right) \in {\mathcal M}_{1,s}$ for all $s \geq 3$, it follows for all $n \geq 2$ that
\begin{equation}\label{b1}
\begin{array}{lcl}
B_{n-2,1}^{(n)}=\left[\begin{array}{c}{\mathcal I}_{n-1} \\ \rule{1cm}{0.1mm} \\ {\mathcal O}_{2,n-1}\end{array}\right] \in {\mathcal M}_{n+1,n-1}, &\quad &B_{n-1,1}^{(n)}=\left[{\mathcal O}_{n+1,n-1} | z_{n+1}^{T} \right] \in {\mathcal M}_{n+1,n}, \\ \\
B_{n+1,1}^{(n)}=\left[\begin{array}{c}{\mathcal O}_{n,n+2} \\ \rule{1cm}{0.1mm} \\ z_{n+2} \end{array}\right]  \in {\mathcal M}_{n+1,n+2}, &\quad &B_{n+2,1}^{(n)}=\left[{\mathcal I}_{n+1} | {\mathcal O}_{n+1,2}\right] \in {\mathcal M}_{n+1,n+3}.
\end{array}
\end{equation}
These formulas are also valid to define $B_{i,1}^{(0)}$ and $B_{j,1}^{(1)}$ for $i=1,2$ and $j=0,2,3$ if we interpret ${\mathcal O}_{0,2}={\mathcal O}_{2,0}=\emptyset$ and $z_2=(1 \;0)$. Here, the second subindex in the $B_{s,1}^{(n)}\in {\mathcal M}_{n+1,s+1}$ matrices with $s \in \{ n-2,n-1,n+1,n+2\}$ is used to separate the case of multiplication by $\left(y+\frac{1}{y}\right)$, see further. So, it is clear from \eqref{varphisexpand} that
\begin{equation}\label{lt1}
\left( x + \frac{1}{x} \right)\varphi_n(x,y) = A_n^{(n)}\left[ B_{n+2,1}^{(n)}\phi_{n+2} + B_{n+1,1}^{(n)}\phi_{n+1} + B_{n-1,1}^{(n)}\phi_{n-1} + B_{n-2,1}^{(n)} \phi_{n-2} \right] \;+\;\textrm{l. t.},
\end{equation}
where by ``l. t. (lower terms)" we understand linear combinations of $\left\{ \phi_0,\ldots,\phi_{n+1} \right\}$.

The main reason why multiplication by $x + \frac{1}{x}$ should be considered is the fact that $B_{n+2,1}^{(n)}$ is full rank. This also holds for multiplication by $y+\frac{1}{y}$, as we will see further, but it is easy to check from (\ref{mult1})-(\ref{mult4}) that this property is not satisfied when considering multiplication by $x+\frac{1}{y}$ or $y+\frac{1}{x}$. So, for certain constant matrices $C_k^{(n+2)}\in {\mathcal M}_{n+3,k+1}$, it holds that $\phi_{n+2}(x,y)=\sum_{k=0}^{n+2} C_k^{(n+2)}\varphi_k(x,y)$ with $C_{n+2}^{(n+2)}=\left(A_{n+2}^{(n+2)} \right)^{-1}$ and then
$$\begin{array}{ccl} \left( x + \frac{1}{x} \right)\varphi_n(x,y) &=& A_n^{(n)} B_{n+2,1}^{(n)} \phi_{n+2}(x,y) \;+\;\textrm{lower terms} \\ \\
&=& A_n^{(n)} B_{n+2,1}^{(n)}\left( \sum_{k=0}^{n+2} C_k^{(n+2)}\varphi_k(x,y)\right) \;+\;\textrm{lower terms} \\ \\
&=& A_n^{(n)} B_{n+2,1}^{(n)} C_{n+2}^{(n+2)}\varphi_{n+2}(x,y)\;+\;\textrm{lower terms}.
      \end{array}$$
If we define
$$D_{n+2,1}^{(n)}:=A_n^{(n)} B_{n+2,1}^{(n)} C_{n+2}^{(n+2)}=A_n^{(n)} B_{n+2,1}^{(n)} \left( A_{n+2}^{(n+2)} \right)^{-1} \in {\mathcal M}_{n+1,n+3},$$
and it has (full) rank $n+1$.

In short, we have proved that
$$\left( x + \frac{1}{x} \right)\varphi_n(x,y) = \sum_{k=0}^{n+2} D_{k,1}^{(n)}\varphi_{k}(x,y) \quad \textrm{where} \quad D_{k,1}^{(n)}\in {\mathcal M}_{n+1,k+1}$$
and $D_{n+2,1}^{(n)}$ being of (full) rank $n+1$.
Now, using the orthogonality conditions and the property $\left( x + \frac{1}{x} \right)\varphi_k(x,y)\in {\mathcal L}_{k+2}$, it follows that
$$D_{k,1}^{(n)} = \langle \left( x + \frac{1}{x} \right)\varphi_n(x,y) , \varphi_k^T(x,y)  \rangle = \langle \varphi_n(x,y) , \left( x + \frac{1}{x} \right)\varphi_k^T(x,y) \rangle = 0 \quad \textrm{if} \quad k < n-2.$$
This implies the following five-term recurrence relation that holds for $n \geq 2$:
\begin{equation}\label{rec}
\begin{aligned}
\left( x + \frac{1}{x} \right)\varphi_n(x,y) = &D_{n+2,1}^{(n)}\varphi_{n+2}(x,y)+D_{n+1,1}^{(n)}\varphi_{n+1}(x,y)+D_{n,1}^{(n)}\varphi_{n}(x,y)
\\
& + D_{n-1,1}^{(n)}\varphi_{n-1}(x,y)+D_{n-2,1}^{(n)}\varphi_{n-2}(x,y),
\end{aligned}
\end{equation}
with
\begin{equation}\label{dsn}
D_{s,1}^{(n)} = \langle \left( x + \frac{1}{x} \right)\varphi_n(x,y),\varphi_s^T(x,y) \rangle\in {\mathcal M}_{n+1,s+1}, \quad \quad s \in \{n-2,\ldots,n+2 \}
\end{equation} and leading coefficient matrix $D_{n+2,1}^{(n)}$ of (full) rank $n+1$. Moreover, for $s \in \left\{ n-2,\ldots n+2 \right\}$
$$
\begin{aligned}
D_{s,1}^{(n)} = &\langle \left( x + \frac{1}{x} \right)\varphi_n(x,y),\varphi_{s}^T(x,y) \rangle = \langle \varphi_n(x,y),\left( x + \frac{1}{x} \right)\varphi_{s}^T(x,y) \rangle
\\
=& \langle \varphi_n, \left( D_{s+2,1}^{(s)}\varphi_{s+2} + D_{s+1,1}^{(s)}\varphi_{s+1} + D_{s,1}^{(s)}\varphi_{s}+ D_{s-1,1}^{(s)}\varphi_{s-1} + D_{s-2,1}^{(s)}\varphi_{s-2} \right)^T \rangle = \left(D_{n,1}^{(s)}\right)^T
\end{aligned}
$$
so we have proved that
\begin{equation}\label{coeftailed}
 D_{s,1}^{(n)} = \left(D_{n,1}^{(s)}\right)^T, \quad s \in \left\{ n-2,\ldots n+2 \right\}.
\end{equation}
This implies, in particular, that $D_{n,1}^{(n)}$ is symmetric and that the tailed coefficient matrix $D_{n-2,1}^{(n)}\in {\mathcal M}_{n+1,n-1}$ in (\ref{rec}) is also of full rank, equal to $n-1$.

Concerning the initial conditions, we may observe that the recurrence (\ref{rec}) is also valid for $n=0,1$ by setting $\varphi_{-1} \equiv \varphi_{-2} \equiv 0$. Indeed, recall first that
$$\phi_0(x,y) \equiv 1, \quad \phi_1(x,y) = \left( \begin{array}{c} x \\ y  \end{array}\right), \quad \phi_2(x,y)=\left( \begin{array}{c} 1/x \\ xy \\ 1/y  \end{array}\right), \quad \phi_3(x,y)=\left( \begin{array}{c} x^2 \\ y/x \\ x/y \\ y^2\end{array}\right).$$
For $n=0$, we can find matrices $D_{i,1}^{(0)} \in {\mathcal M}_{1,i+1}$, $i=0,1,2$ such that (\ref{rec}) holds. From (\ref{varphisexpand}) we can write
$$\left( x + \frac{1}{x} \right)A_0^{(0)}=\left[ D_{2,1}^{(0)}A_2^{(2)} \right]\phi_2 + \left[ D_{1,1}^{(0)}A_1^{(1)}+D_{2,1}^{(0)}A_1^{(2)} \right]\phi_1+\left[ D_{0,1}^{(0)}A_0^{(0)}+D_{1,1}^{(0)}A_0^{(1)}+D_{2,1}^{(0)}A_0^{(2)} \right]\phi_0$$
where $A_i^{(i)}$ are regular, for $i=0,1,2$. Hence, since
$$\left( x + \frac{1}{x} \right)A_0^{(0)}=B_{2,1}^{(0)}A_0^{(0)}\phi_2 + B_{1,1}^{(0)}A_0^{(0)}\phi_1,$$
it follows that
\begin{equation}\label{initialprocedure0}\begin{array}{l} D_{2,1}^{(0)}=B_{2,1}^{(0)}A_0^{(0)}\left(A_2^{(2)}\right)^{-1}
\\[6pt]
D_{1,1}^{(0)}=\left( B_{1,1}^{(0)} A_0^{(0)} - D_{2,1}^{(0)}A_1^{(2)} \right) \left(A_1^{(1)}\right)^{-1}
\\[6pt]
D_{0,1}^{(0)}=-\left( D_{1,1}^{(0)}A_0^{(1)} + D_{2,1}^{(0)}A_0^{(2)}\right) \left(A_0^{(0)}\right)^{-1}. \end{array}\end{equation}
Similarly when $n=1$, we can find matrices $D_{i,1}^{(1)} \in {\mathcal M}_{2,i+1}$, $i=0,1,2,3$ such that (\ref{rec}) holds. By one hand, we can write from (\ref{varphisexpand})
$$\begin{aligned}
\left( x + \frac{1}{x} \right)\left[ A_0^{(1)}\phi_0 + A_1^{(1)}\phi_1  \right]= & \left[ D_{3,1}^{(1)}A_3^{(3)} \right]\phi_3 + \left[ D_{3,1}^{(1)}A_2^{(3)} + D_{2,1}^{(1)}A_2^{(2)} \right]\phi_2
\\[6pt]
&  + \left[ D_{3,1}^{(1)}A_1^{(3)}+D_{2,1}^{(1)}A_1^{(2)}+D_{1,1}^{(1)}A_1^{(1)} \right]\phi_1
\\[6pt]
&
+\left[ D_{3,1}^{(1)}A_0^{(3)}+D_{2,1}^{(1)}A_0^{(2)}+D_{1,1}^{(1)}A_0^{(1)}+D_{0,1}^{(1)}A_0^{(0)} \right]\phi_0
\end{aligned}$$
with $A_i^{(i)}$ regular matrices for $i=0,\ldots,3$. By other hand,
$$\left( x + \frac{1}{x} \right)\left[ A_0^{(1)}\phi_0 + A_1^{(1)}\phi_1  \right]= A_1^{(1)}B_{3,1}^{(1)}\phi_3 + \left[ A_0^{(1)} B_{2,1}^{(0)} + A_1^{(1)} B_{2,1}^{(1)} \right]\phi_2 + A_0^{(1)} B_{1,1}^{(0)}\phi_1 + A_1^{(1)}B_{0,1}^{(1)}\phi_0.$$
Hence,
\begin{equation}\label{initialprocedure1}\begin{array}{l} D_{3,1}^{(1)}=A_1^{(1)}B_{3,1}^{(1)} \cdot\left(A_3^{(3)}\right)^{-1}
\\[5pt]
D_{2,1}^{(1)}=\left( A_0^{(1)} B_{2,1}^{(0)} + A_1^{(1)} B_{2,1}^{(1)} - D_{3,1}^{(1)}A_2^{(3)} \right) \cdot\left(A_2^{(2)}\right)^{-1}
\\[5pt]
D_{1,1}^{(1)}=\left( A_0^{(1)}B_{1,1}^{(0)} - D_{3,1}^{(1)}A_1^{(3)} - D_{2,1}^{(1)}A_1^{(2)}\right)\cdot\left(A_1^{(1)}\right)^{-1}
\\[5pt]
D_{0,1}^{(1)}=\left( A_1^{(1)}B_{0,1}^{(1)} - D_{3,1}^{(1)}A_0^{(3)} - D_{2,1}^{(1)}A_0^{(2)} - D_{1,1}^{(1)}A_0^{(1)}\right)\cdot\left(A_0^{(0)}\right)^{-1}.  \end{array}\end{equation}

As a consequence, we can give from (\ref{coeftailed}) a matrix representation with respect to $\{ \varphi_k \}_{k=0}^{\infty}$ of the {\em multiplication plus inverse multiplication} operator:
\begin{equation}\label{representation1x}
\left( x + \frac{1}{x} \right) \cdot \left(\varphi_0 \;\; \varphi_1 \;\; \varphi_2 \;\; \varphi_3 \;\; \varphi_4 \;\; \varphi_5 \;\; \cdots \right)^{T}={\mathcal F}_1 \cdot \left( \varphi_0 \;\; \varphi_1 \;\; \varphi_2 \;\; \varphi_3 \;\; \varphi_4 \;\; \varphi_5 \;\; \cdots \right)^{T}
\end{equation}
where ${\mathcal F}_1$ is the block symmetric matrix given by
\begin{equation}\label{representation2x}{\mathcal F}_1=\left(\begin{array}{ccccccccc} D_{0,1}^{(0)} & D_{1,1}^{(0)} & D_{2,1}^{(0)} & {\mathcal O}_{1,4} & {\mathcal O}_{1,5} & {\mathcal O}_{1,6} & {\mathcal O}_{1,7} & {\mathcal O}_{1,8} & \cdots \\ \left( D_{1,1}^{(0)}\right)^T & D_{1,1}^{(1)} & D_{2,1}^{(1)} & D_{3,1}^{(1)} & {\mathcal O}_{2,5} & {\mathcal O}_{2,6} & {\mathcal O}_{2,7} & {\mathcal O}_{2,8} & \cdots \\ \left(D_{2,1}^{(0)}\right)^T & \left( D_{2,1}^{(1)}\right)^T & D_{2,1}^{(2)} & D_{3,1}^{(2)} & D_{4,1}^{(2)} & {\mathcal O}_{3,6} & {\mathcal O}_{3,7} & {\mathcal O}_{3,8} & \cdots \\ {\mathcal O}_{4,1} & \left(D_{3,1}^{(1)}\right)^T & \left( D_{3,1}^{(2)}\right)^T & D_{3,1}^{(3)} & D_{4,1}^{(3)} & D_{5,1}^{(3)} & {\mathcal O}_{4,7} & {\mathcal O}_{4,8} & \cdots \\ {\mathcal O}_{5,1} & {\mathcal O}_{5,2} & \left(D_{4,1}^{(2)}\right)^T & \left( D_{4,1}^{(3)}\right)^T & D_{4,1}^{(4)} & D_{5,1}^{(4)} & D_{6,1}^{(4)} & {\mathcal O}_{5,8} & \cdots \\ \vdots & \vdots & \vdots & \vdots & \vdots & \vdots & \vdots & \vdots & \ddots  \end{array}\right).
\end{equation}

A very similar analysis can be done from (\ref{mult3})-(\ref{mult4}) when considering multiplication by $y+\frac{1}{y}$, we omit most of the details.
It is important to point out that the corresponding $B_{s,2}^{(n)}$ matrices in the relation
$$\left( y + \frac{1}{y} \right)\phi_n = B_{n+2,2}^{(n)}\phi_{n+2} + B_{n+1,2}^{(n)}\phi_{n+1} + B_{n-1,2}^{(n)}\phi_{n-1} + B_{n-2,2}^{(n)} \phi_{n-2}$$
are given in this case (compare with (\ref{b1})) for all $n \geq 2$ by
$$\begin{array}{lcl}
B_{n-2,2}^{(n)}=\left[\begin{array}{c}{\mathcal O}_{2,n-1} \\ \rule{1cm}{0.1mm} \\ {\mathcal I}_{n-1}\end{array}\right] \in {\mathcal M}_{n+1,n-1}, &\quad &B_{n-1,2}^{(n)}=\left[ \tilde{z}_{n+1}^{T} | {\mathcal O}_{n+1,n-1} \right] \in {\mathcal M}_{n+1,n}, \\ \\
B_{n+1,2}^{(n)}=\left[\begin{array}{c} \tilde{z}_{n+2} \\ \rule{1cm}{0.1mm} \\ {\mathcal O}_{n,n+2}  \end{array}\right]  \in {\mathcal M}_{n+1,n+2}, &\quad &B_{n+2,2}^{(n)}=\left[ {\mathcal O}_{n+1,2} | {\mathcal I}_{n+1} \right] \in {\mathcal M}_{n+1,n+3},
\end{array}$$
where $\tilde{z}_s=\left( 0 \;\; 1 \;\; 0 \;\; \cdots \;\; 0 \right) \in {\mathcal M}_{1,s}$ for all $s \geq 3$ and $\tilde{z}_2=(0 \;1)$. These formulas are again valid for $n=0$, $s\in \{1,2\}$ and $n=1$, $s\in \{ 0,2,3 \}$. Thus,
$$D_{n+2,2}^{(n)}:=\left[ {\mathcal O}_{n+1,2} | A_n^{(n)} \right] \cdot C_{n+2}^{(n+2)}=A_n^{(n)}\cdot B_{n+2,2}^{(n)}\cdot \left( A_{n+2}^{(n+2)} \right)^{-1} \in {\mathcal M}_{n+1,n+3},$$
also has (full) rank $n+1$. The analog of (\ref{lt1}) is
\begin{equation}\label{lt2}
\left( y + \frac{1}{y} \right)\varphi_n(x,y) = A_n^{(n)}\left[ B_{n+2,2}^{(n)}\phi_{n+2} + B_{n+1,2}^{(n)}\phi_{n+1} + B_{n-1,2}^{(n)}\phi_{n-1} + B_{n-2,2}^{(n)} \phi_{n-2} \right] \;+\;\textrm{lower terms}.
\end{equation}

The corresponding five-term recurrence is now, for $n \geq 2$:
\begin{equation}\label{rec2}
\begin{aligned}
\left( y + \frac{1}{y} \right)\varphi_n(x,y) = &  D_{n+2,2}^{(n)}\varphi_{n+2}(x,y)+D_{n+1,2}^{(n)}\varphi_{n+1}(x,y)+D_{n,2}^{(n)}\varphi_{n}(x,y)
\\
& + D_{n-1,2}^{(n)}\varphi_{n-1}(x,y)+D_{n-2,2}^{(n)}\varphi_{n-2}(x,y),
\end{aligned}
\end{equation}
with
\begin{equation}\label{dsn2}
D_{s,2}^{(n)} = \langle \left( y + \frac{1}{y} \right)\varphi_n(x,y),\varphi_s^T(x,y) \rangle\in {\mathcal M}_{n+1,s+1}, \quad \quad s \in \{n-2,\ldots,n+2 \}.
\end{equation}
Equation (\ref{rec2}) is again valid for $n=0,1$. The analog of (\ref{initialprocedure0}) is
\begin{equation}\label{initialprocedure00}\begin{array}{l} D_{2,2}^{(0)}=B_{2,2}^{(0)}A_0^{(0)}\left(A_2^{(2)}\right)^{-1}
\\[5pt]
D_{1,2}^{(0)}=\left( B_{1,2}^{(0)} A_0^{(0)} - D_{2,2}^{(0)}A_1^{(2)} \right) \left(A_1^{(1)}\right)^{-1}
\\[5pt]
D_{0,2}^{(0)}=-\left( D_{1,2}^{(0)}A_0^{(1)} + D_{2,2}^{(0)}A_0^{(2)}\right) \left(A_0^{(0)}\right)^{-1}. \end{array}\end{equation}
As in (\ref{coeftailed}) it holds
\begin{equation}\label{symmetryD2}
D_{s,2}^{(n)}=\left(D_{n,2}^{(s)}\right)^T \quad \textrm{for} \quad s \in \left\{n-2,\ldots,n+2 \right\}
\end{equation} and hence, $D_{n,2}^{(n)}$ is symmetric and the tailing coefficient matrix $D_{n-2,2}^{(n)}\in {\mathcal M}_{n+1,n-1}$ in (\ref{rec2}) is also of full rank, equal to $n-1$.

The matrix representation with respect to $\{ \varphi_k \}_{k=0}^{\infty}$ of the {\em multiplication plus inverse multiplication} operator in the variable $y$ is
\begin{equation}\label{representation1y}
\left( y + \frac{1}{y} \right) \cdot \left(\varphi_0 \;\; \varphi_1 \;\; \varphi_2 \;\; \varphi_3 \;\; \varphi_4 \;\; \varphi_5 \;\; \cdots \right)^{T}={\mathcal F}_2 \cdot \left( \varphi_0 \;\; \varphi_1 \;\; \varphi_2 \;\; \varphi_3 \;\; \varphi_4 \;\; \varphi_5 \;\; \cdots \right)^{T}
\end{equation}
where ${\mathcal F}_2$ is a block symmetric matrix like ${\mathcal F}_1$ in (\ref{representation2x}) but replacing the second subindexes $1$ in the $D_{s,1}^{(n)}$ matrices by $2$.

The results of this section can be summarized in the following

\begin{theorem}
Let $\{ \varphi_k \}_{k \geq 0}$ be a family of orthonormal Laurent polynomials with respect to the measure $\mu$.
Then, for all $n \geq 0$ there exist constant matrices $D_{s,i}^{(n)} \in {\mathcal M}_{n+1,s+1}$, $s \in \{n-2,\ldots,n+2\}$, $i\in \{ 1,2 \}$ given by (\ref{dsn}) and (\ref{dsn2}) when $n\geq 2$, $s \in \{0,1,2\}$ if $n=0$ and $s \in \{0,1,2,3\}$ if $n=1$, such that the following five-term relations hold:
\begin{equation}\label{5TR}
\begin{aligned}
\left( x + \frac{1}{x} \right)\varphi_n(x,y) =& D_{n+2,1}^{(n)}\varphi_{n+2}(x,y)+D_{n+1,1}^{(n)}\varphi_{n+1}(x,y)  +D_{n,1}^{(n)}\varphi_{n}(x,y)
\\
&+ D_{n-1,1}^{(n)}\varphi_{n-1}(x,y)+D_{n-2,1}^{(n)}\varphi_{n-2}(x,y),
\\
\left( y + \frac{1}{y} \right)\varphi_n(x,y) =&  D_{n+2,2}^{(n)}\varphi_{n+2}(x,y)+D_{n+1,2}^{(n)}\varphi_{n+1}(x,y) +D_{n,2}^{(n)}\varphi_{n}(x,y)
\\
&+ D_{n-1,2}^{(n)}\varphi_{n-1}(x,y)+D_{n-2,2}^{(n)}\varphi_{n-2}(x,y),
\end{aligned}
\end{equation}
with $\varphi_{-1} \equiv \varphi_{-2} \equiv 0$.  Moreover, for $i\in \{ 1,2 \}$, $D_{s,i}^{(n)}=\left( D_{n,i}^{(s)} \right)^T$, for all $n \geq 2$, $s \in \left\{ n-2,n-1,n \right\}$,  $D_{0,i}^{(1)}=\left( D_{1,i}^{(0)} \right)^T$ and the matrices $D_{n+2,i}^{(n)}$ and  $D_{n-2,i}^{(n)}$ are full rank.
\end{theorem}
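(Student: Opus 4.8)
The statement is a compilation of the facts established through the section, so the plan is simply to organize them. First I would record how the scalar operators $x,\,\frac{1}{x},\,y,\,\frac{1}{y}$ act on the block vectors $\phi_n$: from the explicit tables (\ref{mult1})--(\ref{mult4}) one reads that $\left(x+\frac{1}{x}\right)\phi_n$ and $\left(y+\frac{1}{y}\right)\phi_n$ are linear combinations of $\phi_{n-2},\phi_{n-1},\phi_{n+1},\phi_{n+2}$ with coefficient matrices $B_{s,i}^{(n)}$ exhibited in (\ref{b1}) and its $y$-analogue. The point that distinguishes $x+\frac{1}{x}$ and $y+\frac{1}{y}$ from $x+\frac{1}{y}$ or $y+\frac{1}{x}$ is that the leading blocks $B_{n+2,i}^{(n)}\in{\mathcal M}_{n+1,n+3}$ are of full (row) rank $n+1$, being obtained from ${\mathcal I}_{n+1}$ by adjoining two zero columns; equivalently, all components of $\left(x+\frac{1}{x}\right)\psi_k$ and $\left(y+\frac{1}{y}\right)\psi_k$ lie in ${\mathcal L}_{k+2}\backslash{\mathcal L}_{k+1}$ whenever $\psi_k\in{\mathcal L}_k\backslash{\mathcal L}_{k-1}$.

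Passing to the orthonormal family via $\varphi_n=\sum_{i=0}^{n}A_i^{(n)}\phi_i$ with $A_n^{(n)}$ regular, and expanding $\phi_{n+2}$ back in the $\varphi$-basis (its leading coefficient being $\bigl(A_{n+2}^{(n+2)}\bigr)^{-1}$), I would obtain $\left(x+\frac{1}{x}\right)\varphi_n=\sum_{k=0}^{n+2}D_{k,1}^{(n)}\varphi_k$ with $D_{n+2,1}^{(n)}=A_n^{(n)}B_{n+2,1}^{(n)}\bigl(A_{n+2}^{(n+2)}\bigr)^{-1}$; this is a product of two invertible matrices with one of rank $n+1$, hence has rank $n+1$. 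Since moreover $\left(x+\frac{1}{x}\right)\varphi_s\in{\mathcal L}_{s+2}$ and multiplication by the real function $x+\frac{1}{x}$ is self-adjoint for $\langle\cdot,\cdot\rangle_\mu$, one gets, for $s<n-2$, $D_{s,1}^{(n)}=\langle\left(x+\frac{1}{x}\right)\varphi_n,\varphi_s^T\rangle=\langle\varphi_n,\left(x+\frac{1}{x}\right)\varphi_s^T\rangle=0$, which collapses the expansion to the five-term relation (\ref{rec}); the verbatim argument with $y+\frac{1}{y}$ yields (\ref{rec2}).

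The initial cases $n=0,1$ are handled directly, as in (\ref{initialprocedure0})--(\ref{initialprocedure1}) and (\ref{initialprocedure00}): using the explicit $\phi_0,\dots,\phi_3$ one expands both sides of the desired relation in the $\phi$-basis and solves the resulting block-triangular system — whose diagonal blocks are the invertible $A_i^{(i)}$ — recursively for $D_{i,1}^{(0)}$ ($i=0,1,2$), $D_{i,1}^{(1)}$ ($i=0,1,2,3$), and likewise with index $2$; this makes (\ref{5TR}) hold for $n=0,1$ under the convention $\varphi_{-1}\equiv\varphi_{-2}\equiv 0$, and projecting these relations onto $\varphi_s^T$ shows the $D$'s are the inner products (\ref{dsn}) and (\ref{dsn2}) in these cases too. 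For the symmetry I would again use self-adjointness: writing $D_{s,i}^{(n)}=\langle\varphi_n,\left(x_i+\frac{1}{x_i}\right)\varphi_s^T\rangle$ and substituting the five-term relation for $\left(x_i+\frac{1}{x_i}\right)\varphi_s$, only the $\varphi_n$-term survives orthonormality and yields $\bigl(D_{n,i}^{(s)}\bigr)^T$; in particular $D_{n,i}^{(n)}$ is symmetric, $D_{0,i}^{(1)}=\bigl(D_{1,i}^{(0)}\bigr)^T$, and the tail block satisfies $D_{n-2,i}^{(n)}=\bigl(D_{n,i}^{(n-2)}\bigr)^T=\bigl(D_{(n-2)+2,i}^{(n-2)}\bigr)^T$, the transpose of a rank-$(n-1)$ matrix, hence of full rank $n-1$.

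The only genuinely delicate point — the one I would re-check with care — is the rank claim for the leading blocks $B_{n+2,i}^{(n)}$ (and, by transposition, the tail blocks), since the fact that the recursion terminates after exactly five terms and all the rank assertions hinge on it; the bookkeeping in (\ref{mult1})--(\ref{mult4}) of which antidiagonal ${\mathcal L}_j\backslash{\mathcal L}_{j-1}$ receives each entry of $x^{\pm 1}\phi_n$ and $y^{\pm 1}\phi_n$ must be verified separately for the parities $n=2l$ and $n=2l-1$.
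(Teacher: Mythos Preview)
Your proposal is correct and follows essentially the same route as the paper: the action of $x+\frac{1}{x}$ and $y+\frac{1}{y}$ on the $\phi_n$ via the explicit blocks $B_{s,i}^{(n)}$, the change of basis to $\varphi_n$ yielding $D_{n+2,i}^{(n)}=A_n^{(n)}B_{n+2,i}^{(n)}\bigl(A_{n+2}^{(n+2)}\bigr)^{-1}$ of full rank, the vanishing of the lower coefficients by self-adjointness of the multiplication operator and orthogonality, the direct treatment of $n=0,1$, and the symmetry $D_{s,i}^{(n)}=\bigl(D_{n,i}^{(s)}\bigr)^T$ giving the tail rank. One small clarification: the termination after five terms does not actually hinge on the rank of $B_{n+2,i}^{(n)}$ --- it follows purely from $\left(x+\frac{1}{x}\right)\varphi_k\in{\mathcal L}_{k+2}$ and orthogonality --- the rank is needed only for the full-rank assertions on $D_{n+2,i}^{(n)}$ and $D_{n-2,i}^{(n)}$.
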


We conclude this section with the following elementary result that will be used in the next section.
\begin{lemma}\label{lemaD}
For all $n \geq 1$, the matrices
$$D_{n+2}^{(n)}:=\left[ \begin{array}{c} D_{n+2,1}^{(n)} \\ \rule{1cm}{0.1mm} \\ D_{n+2,2}^{(n)} \end{array}\right] \in {\mathcal M}_{2(n+1),n+3} \quad \quad \textrm{and} \quad \quad B_{n+2}^{(n)}:=\left[ \begin{array}{c} B_{n+2,1}^{(n)} \\ \rule{1cm}{0.1mm} \\ B_{n+2,2}^{(n)} \end{array}\right] \in {\mathcal M}_{2(n+1),n+3}$$
have full rank, equal to $n+3$. This full rank properties are also valid for $n=0$, but in this case they are equal to $2$, instead of $3$.
\end{lemma}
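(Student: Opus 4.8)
The plan is to reduce the statement about $D_{n+2}^{(n)}$ to the analogous statement about $B_{n+2}^{(n)}$, and then to read $\rank B_{n+2}^{(n)}$ directly off the explicit block forms of $B_{n+2,1}^{(n)}$ and $B_{n+2,2}^{(n)}$ recorded in Section~\ref{SecOLP2VRR}. For the reduction I would use that $D_{n+2,i}^{(n)}=A_n^{(n)}B_{n+2,i}^{(n)}\left(A_{n+2}^{(n+2)}\right)^{-1}$ for $i=1,2$ (valid for every $n\geq 0$, the two initial cases following from the initial-procedure formulas and the fact that $A_0^{(0)}$ is a scalar); stacking these two relations on top of one another yields
$$
D_{n+2}^{(n)}=\left[\begin{array}{cc} A_n^{(n)} & {\mathcal O}_{n+1} \\ {\mathcal O}_{n+1} & A_n^{(n)} \end{array}\right]\; B_{n+2}^{(n)}\; \left(A_{n+2}^{(n+2)}\right)^{-1}.
$$
Since $A_n^{(n)}$ and $A_{n+2}^{(n+2)}$ are regular, the block-diagonal left factor and the right factor are both invertible, so $\rank D_{n+2}^{(n)}=\rank B_{n+2}^{(n)}$. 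Thus it remains only to compute $\rank B_{n+2}^{(n)}$.

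For $n\geq 1$ I would substitute $B_{n+2,1}^{(n)}=\left[{\mathcal I}_{n+1}\,\big|\,{\mathcal O}_{n+1,2}\right]$ and $B_{n+2,2}^{(n)}=\left[{\mathcal O}_{n+1,2}\,\big|\,{\mathcal I}_{n+1}\right]$ into $B_{n+2}^{(n)}\in{\mathcal M}_{2(n+1),\,n+3}$ and then exhibit an invertible $(n+3)\times(n+3)$ submatrix. Indeed, rows $1,\dots,n+1$ of $B_{n+2}^{(n)}$ (those coming from $B_{n+2,1}^{(n)}$) are exactly the first $n+1$ rows of ${\mathcal I}_{n+3}$, while rows $2n+1$ and $2n+2$ (the last two rows of the $B_{n+2,2}^{(n)}$ block, whose single entries $1$ sit in columns $n+2$ and $n+3$) are the last two rows of ${\mathcal I}_{n+3}$; since $2n+1>n+1$ these index sets are disjoint, so the $(n+3)\times(n+3)$ submatrix of $B_{n+2}^{(n)}$ on rows $1,\dots,n+1,2n+1,2n+2$ is precisely ${\mathcal I}_{n+3}$. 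Because $n+3\leq 2(n+1)$ for $n\geq 1$, this forces $\rank B_{n+2}^{(n)}=n+3$, hence $\rank D_{n+2}^{(n)}=n+3$ by the reduction above.

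For $n=0$ I would compute directly: with the conventions ${\mathcal O}_{0,2}={\mathcal O}_{2,0}=\emptyset$ from Section~\ref{SecOLP2VRR} one has $B_{2,1}^{(0)}=(1\;\;0\;\;0)$ and $B_{2,2}^{(0)}=(0\;\;0\;\;1)$, so
$$
B_2^{(0)}=\left[\begin{array}{ccc} 1 & 0 & 0 \\ 0 & 0 & 1 \end{array}\right]\in{\mathcal M}_{2,3}
$$
has linearly independent rows, which gives $\rank B_2^{(0)}=\rank D_2^{(0)}=2$; here $2(n+1)=2<3=n+3$, which is exactly why the common rank is $2$ rather than $n+3$. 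I expect the only real obstacle to be the index bookkeeping that identifies the two stacked identity blocks with a full $(n+3)\times(n+3)$ identity submatrix, together with remembering to treat the dimension-degenerate case $n=0$ separately; there should be no deeper difficulty once the explicit formulas for the $B$-blocks are invoked.
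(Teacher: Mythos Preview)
Your argument is correct and follows essentially the same route as the paper: you factor $D_{n+2}^{(n)}=\mathrm{diag}\!\left(A_n^{(n)},A_n^{(n)}\right)\,B_{n+2}^{(n)}\,\left(A_{n+2}^{(n+2)}\right)^{-1}$ and read the rank of $B_{n+2}^{(n)}$ from the explicit block forms of $B_{n+2,1}^{(n)}$ and $B_{n+2,2}^{(n)}$, treating $n=0$ separately. The paper does the same, phrasing the rank conclusion via Sylvester's inequality where you invoke directly that multiplication by invertible matrices preserves rank; your explicit identification of an $(n+3)\times(n+3)$ identity submatrix is a slightly more detailed version of what the paper calls ``trivial.''
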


\begin{proof}
The result trivially follows for the matrix $B_{n+2}^{(n)}$ since we have already seen that
$$B_{n+2,1}^{(n)}=[{\mathcal I}_{n+1} | {\mathcal O}_{n+1,2}]\in {\mathcal M}_{n+1,n+3}, \quad \quad \quad B_{n+2,2}^{(n)}=[{\mathcal O}_{n+1,2} | {\mathcal I}_{n+1} ]\in {\mathcal M}_{n+1,n+3}.$$
We can write
$$D_{n+2}^{(n)}=\left( \begin{array}{cc}
A_n^{(n)}   &  {\mathcal O}_{n+1,2}
\\[5pt] \hline
{\mathcal O}_{n+1,2} &  A_n^{(n)}
\end{array}\right)\cdot \left( A_{n+2}^{(n+2)} \right)^{-1},$$  so the result follows directly by using Sylvester inequality (see e.g. \cite[p. 13]{Horn}). Finally, the result for $n=0$ holds since
$$D_2^{(0)}=B_2^{(0)}\cdot \left( A_2^{(2)} \right)^{-1}.$$
\end{proof}

\section{Favard's theorem and Christoffel-Darboux formula}\label{SecFavCD}

From the results of Section \ref{SecOLP2VRR} we have all the necessary technical modifications to adapt the proofs of Favard's theorem and Christoffel-Darboux formula presented in \cite[Section 3.3]{Xu} for the ordinary polynomials in several variables to the Laurent case.

Our first observation is that since for all $n\geq 1$, the matrix $D_{n+2}^{(n)} \in {\mathcal M}_{2(n+1),n+3}$ defined in Lemma \ref{lemaD} is of full rank $n+3$, it has a left inverse
$$\left( \overline{D}_{n+2}^{(n)}\right)^T=\left( \left(\overline{D}_{n+2,1}^{(n)}\right)^T | \left(\overline{D}_{n+2,2}^{(n)}\right)^T \right) \in {\mathcal M}_{n+3,2(n+1)}, \quad \quad \left(\overline{D}_{n+2,i}^{(n)}\right)^T\in {\mathcal M}_{n+3,n+1}, \quad i=1,2,$$ that is not unique. This means
$$\left( \overline{D}_{n+2}^{(n)}\right)^T \cdot D_{n+2}^{(n)} = \left( \overline{D}_{n+2,1}^{(n)}\right)^T \cdot  D_{n+2,1}^{(n)} + \left( \overline{D}_{n+2,2}^{(n)}\right)^T \cdot  D_{n+2,2}^{(n)} = {\mathcal I}_{n+3}.$$
We need also the following auxiliary result.
\begin{proposition}\label{favardaux}
Let $\left( \overline{D}_{n+2}^{(n)}\right)^T$ be a left inverse of $\overline{D}_{n+2}^{(n)}$. Then, there exists constant matrices $E_n^{i} \in {\mathcal M}_{n+3,n+3-i}$, $i=1,2,3,4$ such that
$$\varphi_{n+2}=\left[ \left(x+\frac{1}{x} \right)\left(\overline{D}_{n+2,1}^{(n)}\right)^T + \left(y+\frac{1}{y} \right)\left(\overline{D}_{n+2,2}^{(n)}\right)^T \right]\varphi_n + E_n^{1}\varphi_{n+1}+ E_n^{2}\varphi_{n}+ E_n^{3}\varphi_{n-1}+ E_n^{4}\varphi_{n-2}.$$
\end{proposition}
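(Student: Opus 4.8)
The plan is to obtain $\varphi_{n+2}$ by combining the two five-term relations in \eqref{5TR} after left-multiplying them by the two blocks of the chosen left inverse, exactly mirroring the argument for ordinary polynomials in \cite[Section 3.3]{Xu}. Fix $n\geq 1$, so that by Lemma \ref{lemaD} the matrix $D_{n+2}^{(n)}\in{\mathcal M}_{2(n+1),n+3}$ has full column rank $n+3$ and a (non-unique) left inverse $\left(\overline{D}_{n+2}^{(n)}\right)^T=\left(\left(\overline{D}_{n+2,1}^{(n)}\right)^T\mid\left(\overline{D}_{n+2,2}^{(n)}\right)^T\right)$ exists, with defining relation
\[
\left(\overline{D}_{n+2,1}^{(n)}\right)^T D_{n+2,1}^{(n)} + \left(\overline{D}_{n+2,2}^{(n)}\right)^T D_{n+2,2}^{(n)} = {\mathcal I}_{n+3}.
\]

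First I would left-multiply the first relation in \eqref{5TR} by the constant matrix $\left(\overline{D}_{n+2,1}^{(n)}\right)^T$ and the second by $\left(\overline{D}_{n+2,2}^{(n)}\right)^T$. Since these matrices have scalar entries they commute with multiplication by $x+\frac{1}{x}$ and by $y+\frac{1}{y}$, so the left-hand sides become $\left(x+\frac{1}{x}\right)\left(\overline{D}_{n+2,1}^{(n)}\right)^T\varphi_n$ and $\left(y+\frac{1}{y}\right)\left(\overline{D}_{n+2,2}^{(n)}\right)^T\varphi_n$, respectively. Adding the two resulting identities and grouping terms, the coefficient of $\varphi_{n+2}$ is $\left(\overline{D}_{n+2,1}^{(n)}\right)^T D_{n+2,1}^{(n)} + \left(\overline{D}_{n+2,2}^{(n)}\right)^T D_{n+2,2}^{(n)}={\mathcal I}_{n+3}$, while the coefficient of $\varphi_k$, for $k\in\{n+1,n,n-1,n-2\}$, is the constant matrix $\left(\overline{D}_{n+2,1}^{(n)}\right)^T D_{k,1}^{(n)} + \left(\overline{D}_{n+2,2}^{(n)}\right)^T D_{k,2}^{(n)}$. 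Solving for $\varphi_{n+2}$ then yields the asserted identity with
\[
E_n^{i} = -\left[\left(\overline{D}_{n+2,1}^{(n)}\right)^T D_{n+2-i,1}^{(n)} + \left(\overline{D}_{n+2,2}^{(n)}\right)^T D_{n+2-i,2}^{(n)}\right],\qquad i=1,2,3,4.
\]
To close the argument I would check the dimensions: $\left(\overline{D}_{n+2,j}^{(n)}\right)^T\in{\mathcal M}_{n+3,n+1}$ and, by \eqref{dsn}--\eqref{dsn2}, $D_{n+2-i,j}^{(n)}\in{\mathcal M}_{n+1,n+3-i}$, so each product above is legal and $E_n^{i}\in{\mathcal M}_{n+3,n+3-i}$, as required; for $n=1$ the term $E_1^4\varphi_{-1}$ is vacuous since $\varphi_{-1}\equiv 0$ (and $E_1^4\in{\mathcal M}_{4,0}$ is the empty matrix).

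There is no real obstacle here: once the left inverse from Lemma \ref{lemaD} is available, the proposition is a one-line manipulation of \eqref{5TR}. The only points deserving care are (i) recording that $\left(\overline{D}_{n+2,1}^{(n)}\right)^T$ and $\left(\overline{D}_{n+2,2}^{(n)}\right)^T$ genuinely are the two $(n+3)\times(n+1)$ blocks of a left inverse of $D_{n+2}^{(n)}$, which is precisely the displayed identity above, and (ii) the bookkeeping of the matrix shapes $D_{s,i}^{(n)}\in{\mathcal M}_{n+1,s+1}$ so that left multiplication keeps every product well-defined. The non-uniqueness of the left inverse simply translates into non-uniqueness of the matrices $E_n^{i}$, which is consistent with the phrasing ``there exist'' in the statement.
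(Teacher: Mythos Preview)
Your argument is correct and matches the paper's proof essentially line for line: left-multiply the two five-term relations \eqref{rec} and \eqref{rec2} by the blocks of the left inverse, add, use $\left(\overline{D}_{n+2,1}^{(n)}\right)^T D_{n+2,1}^{(n)} + \left(\overline{D}_{n+2,2}^{(n)}\right)^T D_{n+2,2}^{(n)} = {\mathcal I}_{n+3}$, and read off the same formula $E_n^{i}=-\left[\left(\overline{D}_{n+2,1}^{(n)}\right)^T D_{n+2-i,1}^{(n)} + \left(\overline{D}_{n+2,2}^{(n)}\right)^T D_{n+2-i,2}^{(n)}\right]$. Your additional dimension check and the remark on the $n=1$ case are harmless extras not present in the paper.
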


\begin{proof}
If we add (\ref{rec}) multiplied on the left by $\left(\overline{D}_{n+2,1}^{(n)}\right)^T$ and (\ref{rec2}) multiplied on the left by $\left(\overline{D}_{n+2,2}^{(n)}\right)^T$, we get
$$\begin{array}{l} \left[ \left(x+\frac{1}{x} \right)\left(\overline{D}_{n+2,1}^{(n)}\right)^T + \left(y+\frac{1}{y} \right)\left(\overline{D}_{n+2,2}^{(n)}\right)^T \right]\varphi_n \; = \; \left[ \left(\overline{D}_{n+2,1}^{(n)}\right)^T D_{n+2,1}^{(n)} + \left(\overline{D}_{n+2,2}^{(n)}\right)^T D_{n+2,2}^{(n)}\right]\varphi_{n+2} \\ \quad + \left[ \left(\overline{D}_{n+2,1}^{(n)}\right)^T D_{n+1,1}^{(n)} + \left(\overline{D}_{n+2,2}^{(n)}\right)^T D_{n+1,2}^{(n)}\right]\varphi_{n+1} + \left[ \left(\overline{D}_{n+2,1}^{(n)}\right)^T D_{n,1}^{(n)} + \left(\overline{D}_{n+2,2}^{(n)}\right)^T D_{n,2}^{(n)}\right]\varphi_{n} \\ \quad + \left[ \left(\overline{D}_{n+2,1}^{(n)}\right)^T D_{n-1,1}^{(n)} + \left(\overline{D}_{n+2,2}^{(n)}\right)^T D_{n-1,2}^{(n)}\right]\varphi_{n-1} + \left[ \left(\overline{D}_{n+2,1}^{(n)}\right)^T D_{n-2,1}^{(n)} + \left(\overline{D}_{n+2,2}^{(n)}\right)^T D_{n-2,2}^{(n)}\right]\varphi_{n-2}.\end{array}$$
So, the result follows by considering
$$E_n^{i}=-\left[  \left(\overline{D}_{n+2,1}^{(n)}\right)^T D_{n+2-i,1}^{(n)} + \left(\overline{D}_{n+2,2}^{(n)}\right)^T D_{n+2-i,2}^{(n)} \right] \in {\mathcal M}_{n+3,n+3-i}, \quad \quad i=1,2,3,4.$$
\end{proof}

Now we are in position to prove a Favard-type theorem by following the ideas presented in \cite[Section 3.3]{Xu}. We concentrate in the positive-definite case.
\begin{theorem}[Favard]\label{favard1}
Let ${\mathcal L}$ and ${\mathcal L}_n$ be given by (\ref{matrixL})-(\ref{Ln}), $\{ \varphi_n \}_{n=0}^{\infty}$ be an arbitrary sequence in ${\mathcal L}$ written in the form (\ref{varphisexpand}) with $\phi_k$ defined in (\ref{phik}) for all $k \geq 0$, $\varphi_n \in {\mathcal L}_n \backslash {\mathcal L}_{n-1}$, for all $n \geq 1$ where $\varphi_0 \equiv 1$ and set $\varphi_{-2} \equiv \varphi_{-1} \equiv 0$.

Suppose that for all $n \geq 0$, there exist matrices $D_{k,i}^{(n)}\in {\mathcal M}_{n+1,k+1}$, $i=1,2$, $k \in \{ n-2,\ldots,n+2\}$ when $n\geq 2$, $k \in \{ 0,1,2 \}$ when $n=0$ and $k \in \{ 0,1,2,3 \}$ when $n=1$, such that
\begin{enumerate}
\item the $L$-polynomials $\varphi_n$ satisfy the recurrences \eqref{5TR} with $D_{s,i}^{(n)}=\left( D_{n,i}^{(s)} \right)^T$, for all $n \geq 2$, $s \in \{ n-2,n-1,n \}$, $D_{0,i}^{(1)}=\left( D_{1,i}^{(0)} \right)^T$ and $i\in \{1,2\}$.
\item the matrices in the relation satisfy the rank conditions:
    $$
    \begin{aligned}
    &\rank D_{n+2,i}^{(n)}=\rank D_{n,i}^{(n+2)}=n+1, \qquad n \geq 0, \quad i=1,2, \\
    &\rank D_{n+2}^{(n)}=n+3, \qquad n \geq 1 \\
    &\rank D_{1}^{(0)}=2,
    \end{aligned}
    $$
with $D_{n+2}^{(n)} \in {\mathcal M}_{2(n+1),n+3}$ introduced in Lemma \ref{lemaD} and $D_{1}^{(0)} \in {\mathcal M}_2$ given by
\begin{equation}\label{d10}
D_1^{(0)}:=\left[ \begin{array}{c} D_{1,1}^{(0)} \\ \rule{1cm}{0.1mm} \\ D_{1,2}^{(0)} \end{array}\right]\in {\mathcal M}_2 \, .
\end{equation}
\end{enumerate}
Then, there exist a linear functional $L$ which defines a positive-definite functional on ${\mathcal L}$ and which makes $\{ \varphi_n \}_{n=0}^{\infty}$ an orthonormal basis in ${\mathcal L}$.
\end{theorem}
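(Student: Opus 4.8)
The plan is to mimic the standard Favard argument for multivariate orthogonal polynomials from \cite[Section 3.3]{Xu}, using the five-term relations \eqref{5TR} as the analog of the three-term relations there, and Proposition \ref{favardaux} as the key structural device. The first step is to \emph{define} the functional. Since $\{\varphi_n\}_{n\ge 0}$ is a Laurent system, each $\varphi_n$ has leading matrix $A_n^{(n)}$ regular, so the collection $\{\varphi_0,\dots,\varphi_n\}$ spans ${\mathcal L}_n$ and hence $\bigcup_n\{\varphi_k\}$ is a basis of ${\mathcal L}$. Declare $L(\varphi_0^2)=1$ (i.e. normalize so that $L(1)=1/(A_0^{(0)})^2$, or rather set $L$ on constants to match $\varphi_0\equiv 1$), and more importantly \emph{prescribe} $L(\varphi_n\varphi_k^T)=\delta_{n,k}{\mathcal I}_{n+1}$ for all $n,k\ge 0$; since $\{\varphi_k\}$ is a basis of ${\mathcal L}$ this extends uniquely by (bi)linearity to a well-defined linear functional $L$ on ${\mathcal L}$. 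Positive-definiteness is then immediate on the nose: any $\psi\in{\mathcal L}$, $\psi\ne 0$, writes as $\psi=\sum_{k=0}^N v_k\varphi_k$ with not all row vectors $v_k$ zero, and $L(\psi^2)=\sum_k \|v_k\|^2>0$. So the only real content is to check that this $L$ is \emph{consistent}, i.e. well-defined as a functional on monomials --- equivalently, that the prescribed values do not contradict each other; but since the $\varphi_k$ are a genuine basis this is automatic. Thus the subtlety is not existence of $L$ but rather showing that $L$, so defined, actually \emph{reproduces} the recurrence coefficients, i.e. that the $D_{s,i}^{(n)}$ appearing in the hypothesized \eqref{5TR} coincide with the integrals $\langle (x+1/x)\varphi_n,\varphi_s^T\rangle$ computed against this $L$. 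Without that, one has not shown the $\varphi_n$ are orthonormal \emph{with respect to a functional for which they are the OLP system induced by the given data}.

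The second step, therefore, is to verify this reproduction property, and here the symmetry hypothesis $D_{s,i}^{(n)}=(D_{n,i}^{(s)})^T$ does the work. By definition of $L$ and orthonormality, $\langle(x+1/x)\varphi_n,\varphi_s^T\rangle = L\big((x+1/x)\varphi_n\varphi_s^T\big)$. Using \eqref{5TR} to expand $(x+1/x)\varphi_n$ and the prescribed orthonormality $L(\varphi_k\varphi_s^T)=\delta_{k,s}{\mathcal I}$, this integral collapses to $D_{s,1}^{(n)}$ whenever $|n-s|\le 2$ and to $0$ otherwise --- so the recurrence coefficients are indeed the Fourier coefficients of $(x+1/x)\varphi_n$ against $L$, as required. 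One also has to check that $L$ is well-defined on \emph{all} of ${\mathcal L}$, not merely that the recurrences are self-consistent: for this one uses that \eqref{5TR} with the rank condition $\operatorname{rank}D_{n+2,i}^{(n)}=n+1$ (and the full-rank of $D_{n+2}^{(n)}$, via Proposition \ref{favardaux}) allows one to express $\varphi_{n+2}$ as a Laurent combination of $(x+1/x)\varphi_n$, $(y+1/y)\varphi_n$, and $\varphi_{n+1},\varphi_n,\varphi_{n-1},\varphi_{n-2}$ --- this is precisely the statement of Proposition \ref{favardaux}, whose left inverse $(\overline D_{n+2}^{(n)})^T$ exists exactly because $D_{n+2}^{(n)}$ has full rank $n+3$ (and the $n=0$ base case because $\operatorname{rank}D_1^{(0)}=2$). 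Inductively this shows the subspaces $\Span\{\varphi_0,\dots,\varphi_n\}$ are the same nested subspaces ${\mathcal L}_n$ built from the monomial ordering, which is what ties the abstract $\varphi_n$ back to the concrete Laurent structure; combined with $\varphi_n\in{\mathcal L}_n\setminus{\mathcal L}_{n-1}$ this is already in the hypotheses, so the induction is really just a consistency check.

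The third step is to organize all of this into the clean statement. I would proceed: (i) define $L$ on the basis $\{\varphi_k\}$ by $L(\varphi_n\varphi_k^T)=\delta_{n,k}{\mathcal I}_{n+1}$ and extend linearly to ${\mathcal L}$ --- noting it is well-defined since $\{\varphi_k\}_{k\ge0}$ is an algebraic basis of ${\mathcal L}$ by regularity of the leading coefficients $A_n^{(n)}$; (ii) observe positive-definiteness directly from the diagonal form; (iii) confirm that under this $L$ the five-term relations \eqref{5TR} are exactly the recurrences satisfied by the induced orthonormal OLP system --- i.e. check that $D_{s,i}^{(n)}=\langle(x+\tfrac1x)\varphi_n,\varphi_s^T\rangle_\mu$-type formulas hold, using the prescribed orthonormality; the symmetry conditions $D_{s,i}^{(n)}=(D_{n,i}^{(s)})^T$ guarantee the block matrices ${\mathcal F}_1,{\mathcal F}_2$ are symmetric, consistent with self-adjointness of multiplication operators under $L$; (iv) note that since the $\varphi_n$ are orthonormal with respect to $L$ and $\varphi_n\in{\mathcal L}_n\setminus{\mathcal L}_{n-1}$ with $\{\varphi_0,\dots,\varphi_n\}$ spanning ${\mathcal L}_n$ (this last point being where Proposition \ref{favardaux} and the rank hypotheses are genuinely used, to make the construction self-contained rather than circular), $\{\varphi_n\}$ is by definition a system of orthonormal Laurent polynomials for $L$, hence an orthonormal basis of ${\mathcal L}$.

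\textbf{Main obstacle.} The conceptually delicate point --- and the one where the rank hypotheses in item (2) of the theorem earn their keep --- is \emph{not} the formal definition of $L$ (which is almost tautological once one accepts $\{\varphi_k\}$ is a basis), but rather ensuring there is no hidden inconsistency: a priori the same Laurent monomial $x^iy^j$ can be reached along different "paths" through the recurrences, and one must know that all such paths assign it the same value under $L$. Equivalently, one must know that the recursively generated spaces $\Span\{\varphi_0,\dots,\varphi_n\}$ really are the canonical ${\mathcal L}_n$ and that multiplication by $x+\tfrac1x$ and $y+\tfrac1y$ act within the scheme consistently --- this is exactly what Proposition \ref{favardaux} (and the full-rank of $D_{n+2}^{(n)}$ underlying it) is designed to supply, by letting one solve for $\varphi_{n+2}$ in terms of lower-index data and the two multiplication operators. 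So the real work is an induction on $n$ verifying that $L$ extends consistently to ${\mathcal L}_{n+2}$ given its consistency on ${\mathcal L}_{n+1}$, with Proposition \ref{favardaux} providing the inductive step; after that, orthonormality and positive-definiteness are essentially free.
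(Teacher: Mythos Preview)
Your proposal has a genuine gap at its foundation. You write: ``prescribe $L(\varphi_n\varphi_k^T)=\delta_{n,k}{\mathcal I}_{n+1}$ for all $n,k\ge 0$; since $\{\varphi_k\}$ is a basis of ${\mathcal L}$ this extends uniquely by (bi)linearity to a well-defined linear functional $L$ on ${\mathcal L}$.'' This is false. The object $L$ is a \emph{linear} functional on ${\mathcal L}$, not a bilinear form; you may prescribe its values on a basis of ${\mathcal L}$, but the entries of the products $\varphi_n\varphi_k^T$ are highly redundant --- they span ${\mathcal L}$ with many linear relations --- so prescribing $L$ on all of them is over-determined. The consistency of such a prescription is exactly the content of the theorem, not a triviality, and it cannot be deduced from $\{\varphi_k\}$ being a basis. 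You acknowledge a version of this in your ``Main obstacle'' paragraph, but the resolution you sketch there contradicts your earlier claim that it is ``automatic,'' and the induction you describe is never set up in a way that could be carried out.

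The paper avoids this trap by defining $L$ only on the \emph{linear} basis: set $L(1)=1$ and $L(\varphi_n)=0$ for $n\ge 1$, which is automatically well-defined. Then $L(\varphi_k\varphi_j^T)$ is a \emph{determined} quantity, and one must \emph{prove} it equals $\delta_{k,j}{\mathcal I}$. The off-diagonal vanishing is proved by induction: assuming $L(\varphi_k\varphi_j^T)=0$ for $k\le n$, $j>k$, one writes $\varphi_{n+1}$ via Proposition~\ref{favardaux} and uses the five-term relations on the \emph{other} factor $\varphi_l$ (for $l>n+1$) to reduce $L(\varphi_{n+1}\varphi_l^T)$ to known zeros. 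The diagonal case ${\mathcal H}_n:=L(\varphi_n\varphi_n^T)={\mathcal I}_{n+1}$ is where the symmetry hypothesis $D_{s,i}^{(n)}=(D_{n,i}^{(s)})^T$ actually does work (your step~(iii) invokes it but never uses it): computing $L\big((x+\tfrac1x)\varphi_n\varphi_{n+2}^T\big)$ two ways gives $D_{n+2,1}^{(n)}{\mathcal H}_{n+2}={\mathcal H}_n(D_{n,1}^{(n+2)})^T={\mathcal H}_n D_{n+2,1}^{(n)}$, and combining with the $y$-relation yields $D_{n+2}^{(n)}{\mathcal H}_{n+2}=\mathrm{diag}({\mathcal H}_n,{\mathcal H}_n)D_{n+2}^{(n)}$; the full-rank hypothesis on $D_{n+2}^{(n)}$ then forces ${\mathcal H}_{n+2}={\mathcal I}_{n+3}$ from ${\mathcal H}_n={\mathcal I}_{n+1}$. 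The base case ${\mathcal H}_1={\mathcal I}_2$ is exactly where $\rank D_1^{(0)}=2$ is needed. Positive-definiteness then follows as you say.
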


\begin{proof}
We first prove that $\{ \varphi_n \}_{n=0}^{\infty}$ forms a basis of ${\mathcal L}$. Using the expression (\ref{varphisexpand}), it suffices to prove that the leading coefficient $A_n^{(n)}$ is regular, for all $n \geq 0$. For $n \geq 2$ we see that comparing the coefficient matrices of $\phi_{n+2}$ in (\ref{lt1}), (\ref{lt2}) and (\ref{rec}), (\ref{rec2}) we get
$$\textrm{diag}\left( A_n^{(n)} \;,\; A_n^{(n)}  \right)\cdot B_{n+2}^{(n)}=D_{n+2}^{(n)}\cdot A_{n+2}^{(n+2)},$$ where $B_{n+2}^{(n)}, D_{n+2}^{(n)}$ are the matrices of rank $n+3$ that have been introduced in Lemma \ref{lemaD}. To prove that $\rank A_n^{(n)}=n+1$ we proceed by induction by showing that from the two initial conditions $n=0,1$ (that holds by hypothesis) we get that if $\rank A_n^{(n)}=n+1$, then $\rank A_{n+2}^{(n+2)}=n+3$. Indeed, if $A_n^{(n)}$ is invertible, then $\textrm{diag}\left( A_n^{(n)} \;,\; A_n^{(n)}  \right)$ is also invertible, $$\rank\left(\textrm{diag}\left( A_n^{(n)} \;,\; A_n^{(n)}\right)\cdot B_{n+2}^{(n)}\right)=\rank B_{n+2}^{(n)}=n+3,$$
and hence, $\rank \left( D_{n+2}^{(n)} \cdot A_{n+2}^{(n+2)} \right)=n+3$. By using Sylvester inequality we get
$$\rank A_{n+2}^{(n+2)} \geq \rank \left( D_{n+2}^{(n)} \cdot A_{n+2}^{(n+2)} \right) \geq \rank D_{n+2}^{(n)} + \rank A_{n+2}^{(n+2)}-(n+3) =\rank A_{n+2}^{(n+2)}.$$
So, we conclude $\rank A_{n+2}^{(n+2)}=\rank \left( D_{n+2}^{(n)} \cdot A_{n+2}^{(n+2)} \right)=n+3$ and the induction is complete.

Since $\{ \varphi_n \}_{n=0}^{\infty}$ is a basis of ${\mathcal L}$, the linear functional $L$ defined on ${\mathcal L}$ by $L(1)=1$ and $L(\varphi_n)=0$, for all $n\geq 1$ is well defined. We now use induction to prove that
\begin{equation}\label{favarddef}L(\varphi_k \varphi_j^T)=0, \quad \quad \textrm{for all} \;\; k \neq j.
\end{equation}
For $n\geq 0$ assume that (\ref{favarddef}) hold $\forall k,j$ such that $0 \leq k \leq n$ and $j>k$. The induction process is directly obtained from Proposition \ref{favardaux} since we have for all $l>n+1$ that
$$L(\varphi_{n+1}\varphi_l^T)=L\left[ \left( \overline{D}_{n+1,1}^{(n-1)} \right)^T \varphi_{n-1} \left(x+\frac{1}{x}\right)\varphi_l^T  \right]+L\left[ \left( \overline{D}_{n+1,2}^{(n-1)} \right)^T \varphi_{n-1}\left(y+\frac{1}{y}\right)\varphi_l^T  \right]=0.$$
Let us see finally that ${\mathcal H}_n:=L(\varphi_n \varphi_n^T)={\mathcal I}_{n+1}$. Notice from (\ref{rec}) that
$$
D_{n+2,1}^{(n)}{\mathcal H}_{n+2}=L\left[ \left( x + \frac{1}{x}\right)\varphi_n \varphi_{n+2}^T \right]=L\left[ \varphi_n \left( \left( x + \frac{1}{x}\right)\varphi_{n+2} \right)^T \right]={\mathcal H}_n \left( D_{n,1}^{(n+2)}\right)^T = {\mathcal H}_n D_{n+2,1}^{(n)}, \quad \forall n \geq 0.
$$
We get a similar result from (\ref{rec2}) when multiplying by $\left( y + \frac{1}{y} \right)$, and both relations can be written together as
\begin{equation}\label{favardh}
D_{n+2}^{(n)}\cdot {\mathcal H}_{n+2} = \textrm{diag}\left( {\mathcal H}_n \;,\; {\mathcal H}_n\right)\cdot D_{n+2}^{(n)}.
\end{equation}
We proceed again by induction over $n$. It is clear from construction that it holds for $n=0$ and the proof is concluded if we prove it for $n=1$. Indeed, in such case if we suppose that the property holds for all $0 \leq k \leq n+1$, it follows from (\ref{favardh}) that $D_{n+2}^{(n)}\cdot {\mathcal H}_{n+2} = D_{n+2}^{(n)}$. Since $D_{n+2}^{(n)}$ is of full rank it has a left inverse, so ${\mathcal H}_{n+2}={\mathcal I}_{n+2}$.

Taking $n=0,1$ in (\ref{rec}) we see that
$$L\left[ \left( x + \frac{1}{x} \right) \varphi_0 \varphi_1^T \right]=D_{2,1}^{(0)}L[\varphi_2 \varphi_1^T] + D_{1,1}^{(0)}L[\varphi_1 \varphi_1^T]+ D_{0,1}^{(0)}L[\varphi_0 \varphi_1^T] = D_{1,1}^{(0)}L[\varphi_1 \varphi_1^T]$$
and
$$
\begin{aligned}
L\left[ \left( x + \frac{1}{x} \right) \varphi_0 \varphi_1^T \right] &=L[\varphi_3]\left( D_{3,1}^{(1)} \right)^T + L[\varphi_2]\left( D_{2,1}^{(1)} \right)^T + L[\varphi_1]\left( D_{1,1}^{(1)} \right)^T + L[\varphi_0]\left( D_{0,1}^{(1)} \right)^T
\\
 &= \left( D_{0,1}^{(1)} \right)^T = D_{1,1}^{(0)}.
\end{aligned}
$$
The same argument can be used to prove $L\left[ \left( y + \frac{1}{y} \right) \varphi_0 \varphi_1^T \right]=D_{1,2}^{(0)}L[\varphi_1 \varphi_1^T]=D_{1,2}^{(0)}$, so we get $D_1^{(0)}L\left[\varphi_1 \varphi_1^T\right]=D_1^{(0)}$ with $D_1^{(0)}$ introduced in (\ref{d10}).
Thus, the proof follows since $D_1^{(0)}$ is regular.
\end{proof}

Let us introduce now the {\em reproducing kernel}
\begin{equation}\label{eq-ker}
{\mathcal K}_n(x_1,y_1,x_2,y_2)=\sum_{k=0}^{n} \varphi_k^T(x_1,y_1)\varphi_k(x_2,y_2).
\end{equation}
This definition is clearly independent on the election of the orthonormal family $\{ \varphi_n \}_{n\geq 0}$ (recall it is uniquely determined up to left multiplication by orthogonal matrices). The name reproducing kernel is justified as in the ordinary polynomial situation because it is easy to verify the reproducing property $\psi(x,y)=\langle \psi(u,v),{\mathcal K}_n^T(x,y,u,v)\rangle$, $\forall \psi \in {\mathcal L}_n$. The extension of the well known Christoffel-Darboux formula for the ordinary polynomial situation (see \cite[Section 3.6.1]{Xu}) is given by the following

\begin{theorem}[Christoffel-Darboux]\label{cd}
Under the above conditions it holds
$${\mathcal K}_n(x_1,y_1,x_2,y_2)=\frac{\Omega_{n,1} + \Lambda_{n,1} + \Lambda_{n-1,1}}{\left( x_1 + \frac{1}{x_1}\right) - \left( x_2 + \frac{1}{x_2}\right)}=\frac{\Omega_{n,2} + \Lambda_{n,2} + \Lambda_{n-1,2}}{\left( y_1 + \frac{1}{y_1}\right) - \left( y_2 + \frac{1}{y_2}\right)},$$
whenever $x_1 + \frac{1}{x_1} \neq x_2 + \frac{1}{x_2}$ in the first equality, $y_1 + \frac{1}{y_1} \neq y_2 + \frac{1}{y_2}$ in the second one, and where for $i=1,2$ and $k \geq 0$,
\begin{equation}\label{LambdaOmegaki}
\begin{array}{ccl}
\Lambda_{k,i} &=&\varphi_{k+2}^T(x_1,y_1)\left( D_{k+2,i}^{(k)} \right)^T \varphi_k(x_2,y_2) - \varphi_{k}^T(x_1,y_1)D_{k+2,i}^{(k)}\varphi_{k+2}(x_2,y_2), \\
\Omega_{k,i} &=&\varphi_{k+1}^T(x_1,y_1)\left( D_{k+1,i}^{(k)} \right)^T \varphi_k(x_2,y_2) - \varphi_{k}^T(x_1,y_1)D_{k+1,i}^{(k)}\varphi_{k+1}(x_2,y_2).
\end{array}
\end{equation}
\end{theorem}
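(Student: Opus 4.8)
The plan is to mimic the classical one-variable (and Xu's multivariate) Christoffel--Darboux telescoping argument, now applied to the five-term recurrence (\ref{5TR}). The key observation is that the kernel ${\mathcal K}_n$ is a sum $\sum_{k=0}^n \varphi_k^T(x_1,y_1)\varphi_k(x_2,y_2)$, and we want to express $\left[\left(x_1+\tfrac{1}{x_1}\right)-\left(x_2+\tfrac{1}{x_2}\right)\right]{\mathcal K}_n$ as a telescoping sum. First I would take the five-term relation (\ref{rec}) for $\varphi_n$ evaluated at $(x_1,y_1)$, transpose it, and multiply on the right by $\varphi_n(x_2,y_2)$; then take (\ref{rec}) for $\varphi_n$ evaluated at $(x_2,y_2)$ and multiply on the left by $\varphi_n^T(x_1,y_1)$. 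Subtracting the two, the diagonal term $D_{n,1}^{(n)}$ (which is symmetric, by (\ref{coeftailed})) cancels, and the left-hand side becomes $\left[\left(x_1+\tfrac{1}{x_1}\right)-\left(x_2+\tfrac{1}{x_2}\right)\right]\varphi_n^T(x_1,y_1)\varphi_n(x_2,y_2)$ — exactly one summand of the kernel, times the desired denominator.

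The right-hand side of that difference will involve the four off-diagonal terms $D_{n\pm1,1}^{(n)}$ and $D_{n\pm2,1}^{(n)}$. Using the transpose relations $D_{s,1}^{(n)}=\left(D_{n,1}^{(s)}\right)^T$ from (\ref{coeftailed}), I would rewrite each of these cross terms so that they fit the pattern of $\Lambda_{k,1}$ and $\Omega_{k,1}$ defined in (\ref{LambdaOmegaki}). Concretely, the term coming from $D_{n+2,1}^{(n)}$ together with the term coming from $D_{n-2,1}^{(n+2)}$ (when we later sum over $n$) should combine/telescope; likewise the $D_{n+1,1}^{(n)}$ and $D_{n-1,1}^{(n+1)}$ contributions. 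Summing the difference over $k=0,\ldots,n$ (with the convention $\varphi_{-1}\equiv\varphi_{-2}\equiv 0$), the bulk of the $\Lambda$ and $\Omega$ quantities cancels in pairs, leaving only the boundary terms at the top of the range: this is precisely $\Omega_{n,1}+\Lambda_{n,1}+\Lambda_{n-1,1}$. Dividing by the (nonzero) scalar $\left(x_1+\tfrac{1}{x_1}\right)-\left(x_2+\tfrac{1}{x_2}\right)$ gives the first equality; the second follows identically by starting from (\ref{rec2}) instead, with the subindex $1$ replaced by $2$ everywhere.

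The main obstacle I anticipate is bookkeeping the telescoping carefully: because the recurrence is \emph{five}-term rather than three-term, each difference produces four cross terms rather than two, and one must check that when summed over $k$ the $\Lambda$-type terms (index jump $2$) and $\Omega$-type terms (index jump $1$) each telescope separately and that the surviving boundary contributions are exactly the three listed quantities $\Omega_{n,1}$, $\Lambda_{n,1}$, $\Lambda_{n-1,1}$, with the correct signs. One has to be attentive to the low-index initial cases $k=0,1$ where the matrices $D_{s,1}^{(k)}$ have the special shapes from (\ref{initialprocedure0})--(\ref{initialprocedure1}) and where $\varphi_{-1},\varphi_{-2}$ vanish; the transpose identities $D_{0,i}^{(1)}=\left(D_{1,i}^{(0)}\right)^T$ handle the boundary consistently, so no genuine new difficulty arises there. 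Once the telescoping is set up, verifying the reproducing/symmetry properties needed is routine given the symmetry of $D_{n,1}^{(n)}$ already established. Finally, the equality of the two expressions (the $x$-version and the $y$-version) is automatic since both equal ${\mathcal K}_n(x_1,y_1,x_2,y_2)$, so no separate argument is needed beyond repeating the computation with (\ref{rec2}).
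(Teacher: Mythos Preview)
Your proposal is correct and follows essentially the same route as the paper: the paper derives the single-index identity
\[
\left[\Bigl(x_1+\tfrac{1}{x_1}\Bigr)-\Bigl(x_2+\tfrac{1}{x_2}\Bigr)\right]\varphi_k^T(x_1,y_1)\varphi_k(x_2,y_2)=\Lambda_{k,1}+\Omega_{k,1}-\Lambda_{k-2,1}-\Omega_{k-1,1}
\]
by exactly the transpose--multiply--subtract manoeuvre you describe (using (\ref{rec}) and the symmetry/transpose relations (\ref{coeftailed})), checks the cases $k=0,1$ via the convention $\varphi_{-1}\equiv\varphi_{-2}\equiv0$, and then sums over $k$ to obtain the telescoped boundary $\Omega_{n,1}+\Lambda_{n,1}+\Lambda_{n-1,1}$; the $y$-version is handled identically from (\ref{rec2}).
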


\begin{proof}
From (\ref{rec}) and (\ref{coeftailed}) we can write for all $k \geq 2$ and $x_1 + \frac{1}{x_1} \neq x_2 + \frac{1}{x_2}$,
$$\left[ \left( x_1 + \frac{1}{x_1}\right) - \left( x_2 + \frac{1}{x_2}\right) \right] \varphi_k^T(x_1,y_1)\varphi_k(x_2,y_2) = \Lambda_{k,1} + \Omega_{k,1} - \Lambda_{k-2,1} - \Omega_{k-1,1}.$$
Taking $n=0,1$ in (\ref{rec}) it follows that the relation also holds for $k=0$ and $k=1$ respectively (recall $\varphi_{-1} \equiv \varphi_{-2} \equiv 0$), if we define $\Lambda_{-2,1}=\Lambda_{-1,1}=\Omega_{-1,1}=0$. So,
$$\sum_{k=0}^{n}\left[ \left( x_1 + \frac{1}{x_1}\right) - \left( x_2 + \frac{1}{x_2}\right) \right] \varphi_k^T(x_1,y_1)\varphi_k(x_2,y_2)=\Omega_{n,1} + \Lambda_{n,1} + \Lambda_{n-1,1}$$
and the first equality of the statement is deduced. The second equality follows in a similar way from (\ref{rec2}) and (\ref{symmetryD2}).
\end{proof}

\begin{corollary}[Confluent formula]\label{confluent}
Under the above conditions it holds
$${\mathcal K}_n(x,y,x,y)=\frac{x^2}{x^2-1}\left[ \tilde{\Omega}_{n,1} + \tilde{\Lambda}_{n,1} + \tilde{\Lambda}_{n-1,1} \right]=\frac{y^2}{y^2-1}\left[ \tilde{\Omega}_{n,2} + \tilde{\Lambda}_{n,2} + \tilde{\Lambda}_{n-1,2} \right]$$
whenever $x^2\neq 1$ in the first equality, $y^2\neq 1$ in the second one, and
$$\begin{array}{ccl}
\tilde{\Lambda}_{k,1} &=& {\displaystyle \varphi_{k+2}^T(x,y)\left( D_{k+2,1}^{(k)} \right)^T \frac{\partial}{\partial x} \varphi_k(x,y)  - \varphi_{k}^T(x,y)D_{k+2,1}^{(k)}\frac{\partial}{\partial x} \varphi_{k+2}(x,y),}
\\[7pt]
\tilde{\Lambda}_{k,2} &=& {\displaystyle \varphi_{k+2}^T(x,y)\left( D_{k+2,2}^{(k)} \right)^T \frac{\partial}{\partial y} \varphi_k(x,y)  - \varphi_{k}^T(x,y)D_{k+2,2}^{(k)}\frac{\partial}{\partial y} \varphi_{k+2}(x,y),}
\\[7pt]
\tilde{\Omega}_{k,1} &=& {\displaystyle \varphi_{k+1}^T(x,y)\left( D_{k+1,1}^{(k)} \right)^T \frac{\partial}{\partial x} \varphi_k(x,y) -  \varphi_{k}^T(x,y)D_{k+1,1}^{(k)}\frac{\partial}{\partial x} \varphi_{k+1}(x,y),}
\\[7pt]
\tilde{\Omega}_{k,2} &=& {\displaystyle \varphi_{k+1}^T(x,y)\left( D_{k+1,2}^{(k)} \right)^T \frac{\partial}{\partial y} \varphi_k(x,y) -  \varphi_{k}^T(x,y)D_{k+1,2}^{(k)}\frac{\partial}{\partial y} \varphi_{k+1}(x,y)}.
\end{array}$$
\end{corollary}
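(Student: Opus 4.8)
The plan is to obtain Corollary~\ref{confluent} as the diagonal limit of the Christoffel--Darboux formula of Theorem~\ref{cd}, via L'H\^opital's rule. Since each $\varphi_k$ is a column of Laurent polynomials, the kernel ${\mathcal K}_n$ of \eqref{eq-ker} is a Laurent polynomial in its four arguments, hence $C^\infty$ wherever none of $x_1,y_1,x_2,y_2$ vanishes; in particular, for $x,y\neq0$, ${\mathcal K}_n(x,y,x,y)=\lim_{x_2\to x}{\mathcal K}_n(x,y,x_2,y)$. Fixing $x_1=x$ and $y_1=y_2=y$, the first equality of Theorem~\ref{cd} reads, for $x_2$ near $x$ and $x^2\neq1$,
\[
{\mathcal K}_n(x,y,x_2,y)\Bigl[\bigl(x+\tfrac1x\bigr)-\bigl(x_2+\tfrac1{x_2}\bigr)\Bigr]=\Omega_{n,1}+\Lambda_{n,1}+\Lambda_{n-1,1},
\]
the right-hand side being obtained from \eqref{LambdaOmegaki} by setting $x_1=x$, $y_1=y_2=y$ (and leaving $x_2$ free).

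First I would check that the right-hand side above vanishes at $x_2=x$, so that one faces a genuine $0/0$ indeterminacy. This is immediate from \eqref{LambdaOmegaki}: when the two pairs of arguments coincide, each $\Lambda_{k,1}$ is a $1\times1$ matrix $\varphi_{k+2}^T(D^{(k)}_{k+2,1})^T\varphi_k$ minus its own transpose $\varphi_k^T D^{(k)}_{k+2,1}\varphi_{k+2}$, hence zero, and likewise for $\Omega_{k,1}$ with $D^{(k)}_{k+1,1}$ in place of $D^{(k)}_{k+2,1}$. This is the same transpose argument already used in the proof of Theorem~\ref{cd}, so no further structural input is needed here.

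The main step is then to differentiate the displayed identity with respect to $x_2$ and set $x_2=x$. On the left, the term carrying the vanishing factor $\bigl(x+\tfrac1x\bigr)-\bigl(x_2+\tfrac1{x_2}\bigr)$ drops out, leaving ${\mathcal K}_n(x,y,x,y)$ multiplied by the derivative of $-\bigl(x_2+\tfrac1{x_2}\bigr)$ at $x_2=x$, which is a nonzero (since $x^2\neq1$) explicit rational function of $x$. On the right, $\tfrac{\partial}{\partial x_2}$ acts only on the factors $\varphi_k(x_2,y)$ and $\varphi_{k+2}(x_2,y)$, which after setting $x_2=x$ become $\tfrac{\partial}{\partial x}\varphi_k(x,y)$ and $\tfrac{\partial}{\partial x}\varphi_{k+2}(x,y)$; comparing term by term with the definitions of $\tilde\Lambda_{k,1}$ and $\tilde\Omega_{k,1}$ in the statement, the right-hand side becomes exactly $\tilde\Omega_{n,1}+\tilde\Lambda_{n,1}+\tilde\Lambda_{n-1,1}$. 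Solving for ${\mathcal K}_n(x,y,x,y)$ yields the first claimed identity, and the second is obtained verbatim starting from the second equality of Theorem~\ref{cd} with $x_1=x_2=x$, $y_1=y$ and differentiating in $y_2$. The only points requiring a little care are (i) the vanishing of the numerator on the diagonal, handled by the transpose remark above, and (ii) the term-by-term identification of the differentiated numerator with the tilded quantities; the conventions $\tilde\Lambda_{-2,i}=\tilde\Lambda_{-1,i}=\tilde\Omega_{-1,i}=0$, inherited from Theorem~\ref{cd}, handle the initial cases $n=0,1$.
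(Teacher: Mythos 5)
Your argument is correct and is essentially the paper's own proof: both take the diagonal limit of the Christoffel--Darboux formula after observing that the numerator vanishes on the diagonal by the scalar-transpose argument, the only cosmetic difference being that you invoke L'H\^opital in $x_2$ while the paper factors the denominator as $(x_1-x_2)\frac{x_1x_2-1}{x_1x_2}$ and passes to the limit of difference quotients (your choice to freeze $y_2=y$ before letting $x_2\to x$ is in fact slightly cleaner, since the paper's joint limit of $\frac{\varphi(x_2,y_2)-\varphi(x_1,y_1)}{x_1-x_2}$ is path-dependent as written). One caveat: carrying your final division out explicitly, the derivative of $-\left(x_2+\frac{1}{x_2}\right)$ at $x_2=x$ equals $-\frac{x^2-1}{x^2}$, so your method actually produces the prefactor $\frac{x^2}{1-x^2}$ rather than the stated $\frac{x^2}{x^2-1}$ (a check at $n=0$ using the recurrence for $\left(x+\frac{1}{x}\right)\varphi_0$ confirms this sign), and the same sign emerges from a careful reading of the paper's own difference quotients, so you should not assert without comment that the computation ``yields the first claimed identity.''
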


\begin{proof}
Since $\varphi_{s}^{T}(x_1,y_1)\left( D_{s,i}^{(k)} \right)^{T} \varphi_k(x_1,y_1)$ is a scalar function for $s \in \{k+1,k+2\}$ and $i \in \{1,2\}$, we can write (compare with (\ref{LambdaOmegaki}))
$$\begin{aligned}
\Lambda_{k,i} &=\varphi_{k+2}^T(x_1,y_1)\left( D_{k+2,i}^{(k)} \right)^T \left[\varphi_k(x_2,y_2) - \varphi_k(x_1,y_1) \right]  - \varphi_{k}^T(x_1,y_1)D_{k+2,i}^{(k)}\left[\varphi_{k+2}(x_2,y_2) - \varphi_{k+2}(x_1,y_1) \right], \\[5pt]
\Omega_{k,i} &=\varphi_{k+1}^T(x_1,y_1)\left( D_{k+1,i}^{(k)} \right)^T \left[ \varphi_k(x_2,y_2) - \varphi_k(x_1,y_1) \right]  -  \varphi_{k}^T(x_1,y_1)D_{k+1,i}^{(k)}\left[ \varphi_{k+1}(x_2,y_2) - \varphi_{k+1}(x_1,y_1)  \right].
\end{aligned}
$$
Also, ${\displaystyle \left( x_1 + \frac{1}{x_1}\right) - \left( x_2 + \frac{1}{x_2}\right) = \left( x_1 - x_2 \right)\frac{x_1x_2-1}{x_1x_2}}$, so if ${\displaystyle x_1 + \frac{1}{x_1} \neq x_2 + \frac{1}{x_2}}$,
$$
\begin{array}{l}
{\mathcal K}_n(x_1,y_1,x_2,y_2)={\displaystyle \frac{\Omega_{n,1} + \Lambda_{n,1} + \Lambda_{n-1,1}}{\left( x_1 + \frac{1}{x_1}\right) - \left( x_2 + \frac{1}{x_2}\right)}=\frac{x_1x_2}{x_1x_2-1}\times \left[ \right.}
\\[20pt]
\quad {\displaystyle \varphi_{n+1}^T(x_1,y_1)\left( D_{n+1,1}^{(n)} \right)^T \cdot \frac{\varphi_n(x_2,y_2) - \varphi_n(x_1,y_1)}{x_1-x_2} }
\\[20pt]
\quad {\displaystyle - \; \varphi_{n}^T(x_1,y_1)D_{n+1,1}^{(n)}\cdot \frac{ \varphi_{n+1}(x_2,y_2) - \varphi_{n+1}(x_1,y_1)}{x_1-x_2}  }
\\[20pt]
\quad {\displaystyle + \; \varphi_{n+2}^T(x_1,y_1)\left( D_{n+2,1}^{(n)} \right)^T \cdot \frac{\varphi_n(x_2,y_2) - \varphi_n(x_1,y_1)}{x_1-x_2} }
\\[20pt]
\quad {\displaystyle - \; \varphi_{n}^T(x_1,y_1)D_{n+2,1}^{(n)}\cdot \frac{\varphi_{n+2}(x_2,y_2) - \varphi_{n+2}(x_1,y_1)}{x_1-x_2}  }
\\[20pt]
\quad {\displaystyle  + \; \varphi_{n+1}^T(x_1,y_1)\left( D_{n+1,1}^{(n-1)} \right)^T \cdot \frac{\varphi_{n-1}(x_2,y_2) - \varphi_{n-1}(x_1,y_1)}{x_1-x_2}}
\\[20pt]
\quad {\displaystyle \left. -  \; \varphi_{n-1}^T(x_1,y_1)D_{n+1,1}^{(n-1)}\cdot\frac{\varphi_{n+1}(x_2,y_2) - \varphi_{n+1}(x_1,y_1)}{x_1-x_2} \right]}.
\end{array}
$$
The first equality follows letting $(x_2,y_2) \rightarrow (x_1,y_1)=(x,y)$ and the second one is obtained in a similar way.
\end{proof}

\section{A connection with the one variable case}\label{Sec1variable}

Consider the rectangle ${\mathcal R}=[a,b]\times [c,d]$, $0<a<b<\infty$, $0 < c < d < \infty$, and a positive Borel measure on ${\mathcal R}$ that can be factorized in the form $d\mu(x,y)=d\mu_1(x)d\mu_2(y)$. Let $\langle \cdot , \cdot \rangle_{\mu}$ be the inner product given by (\ref{inner}) and $\langle \cdot , \cdot \rangle_{\mu_i}$ the corresponding inner products for the measures $d\mu_i$, $i=1,2$:
$$\begin{array}{lcl}
{\displaystyle \langle f,g \rangle_{\mu_1}=\int_a^b f(x)g(x)d\mu_1(x), }&\quad& f,g \in L_2^{\mu_1}=\left\{ h: [a,b] \rightarrow \RR \;:\;  {\displaystyle \int_a^b h^2(x)d\mu_1(x) < \infty} \right\}, \\[15pt]
{\displaystyle \langle f,g \rangle_{\mu_2}=\int_c^d f(y)g(y)d\mu_2(y),} &\quad& f,g \in L_2^{\mu_2}=\left\{ h: [c,d] \rightarrow \RR \;:\;  {\displaystyle \int_c^d h^2(y)d\mu_2(y)} < \infty \right\}.
\end{array}$$
Notice that the corresponding moments $m_k^{(i)}$ are strictly positive, for all $k \in \ZZ$ and $i\in \{ 1,2 \}$. Let us denote by $\left\{ \psi_n^{(i)} \right\}_{n \geq 0}$ for $i=1,2$ the families of orthogonal Laurent polynomials in one variable with respect to me measures $\mu_i$ and the ``balanced'' ordering (\ref{balanced1v}). Thus we can prove how from these two families we can construct an orthonormal basis of Laurent polynomials in two variables.

\begin{proposition}\label{aconnection}
Under the above conditions, let $\varphi_{n,k}$ be given by
$$\varphi_{n,k}(x,y)=\psi_{n-k}^{(1)}(x)\psi_k^{(2)}(y), \quad  \textrm{for all} \quad n\geq 0 \quad \textrm{and} \quad k \in \{0,1,\ldots,n\}.$$
Then the set $\left\{ \varphi_{n,k} : n\geq 0, \;k=0,\ldots,n  \right\}$ is an orthonormal basis of ${\mathcal L}$.
\end{proposition}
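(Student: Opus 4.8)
The plan is to verify separately that $\{\varphi_{n,k}\}$ is orthonormal with respect to $\langle\cdot,\cdot\rangle_\mu$ and that it spans ${\mathcal L}$; linear independence then follows automatically from orthonormality, so these two facts together give the assertion. (Implicit here is that the one–variable orthonormal families $\{\psi_j^{(i)}\}_{j\geq 0}$, $i=1,2$, exist; this holds because each $\mu_i$ is a positive Borel measure supported in a compact interval bounded away from the origin, so $\langle\cdot,\cdot\rangle_{\mu_i}$ is positive definite on one–variable Laurent polynomials and the one–variable theory recalled around Theorem \ref{recu1var} applies.)

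For orthonormality I would use the factorization $d\mu(x,y)=d\mu_1(x)\,d\mu_2(y)$ together with Fubini's theorem: for $n,m\geq 0$, $0\leq k\leq n$ and $0\leq l\leq m$,
\begin{align*}
\langle\varphi_{n,k},\varphi_{m,l}\rangle_\mu
&=\iint_{\mathcal R}\psi_{n-k}^{(1)}(x)\psi_{m-l}^{(1)}(x)\,\psi_k^{(2)}(y)\psi_l^{(2)}(y)\,d\mu_1(x)\,d\mu_2(y)\\
&=\langle\psi_{n-k}^{(1)},\psi_{m-l}^{(1)}\rangle_{\mu_1}\,\langle\psi_k^{(2)},\psi_l^{(2)}\rangle_{\mu_2}
=\delta_{n-k,m-l}\,\delta_{k,l}.
\end{align*}
When $\delta_{k,l}\neq 0$ one has $k=l$ and then $\delta_{n-k,m-l}=\delta_{n,m}$, so the product of Kronecker deltas equals $\delta_{n,m}\,\delta_{k,l}$; hence $\{\varphi_{n,k}\}$ is an orthonormal set, in particular linearly independent.

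For the spanning property I would argue by a dimension count inside each ${\mathcal L}_n$. In the balanced ordering one has $\psi_j^{(1)}(x)\in\mathscr L_j=\Span\{x^{c_0},\dots,x^{c_j}\}$ and likewise $\psi_j^{(2)}(y)\in\Span\{y^{c_0},\dots,y^{c_j}\}$; therefore $\varphi_{n,k}=\psi_{n-k}^{(1)}\psi_k^{(2)}$ is a linear combination of monomials $x^{c_i}y^{c_j}=p_{i,j}$ with $i\leq n-k$ and $j\leq k$, in particular with $i+j\leq n$, so $\varphi_{n,k}\in{\mathcal L}_n$. Consequently, for each fixed $n\geq 0$, the set $\{\varphi_{m,k}:0\leq k\leq m\leq n\}$ consists of pairwise orthonormal, hence linearly independent, elements of ${\mathcal L}_n$, and its cardinality is $\sum_{m=0}^n(m+1)=\frac{(n+1)(n+2)}{2}=\dim{\mathcal L}_n$; hence it is a basis of ${\mathcal L}_n$. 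Since ${\mathcal L}=\bigcup_{n\geq 0}{\mathcal L}_n$, passing to the union over $n$ yields that $\{\varphi_{n,k}:n\geq 0,\ 0\leq k\leq n\}$ is an orthonormal basis of ${\mathcal L}$.

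I do not expect a genuine obstacle; the only points needing care are the Kronecker bookkeeping $\delta_{n-k,m-l}\delta_{k,l}=\delta_{n,m}\delta_{k,l}$ and the fact that the balanced degree of $x^{c_j}$ is exactly $j$ (so that $\varphi_{n,k}$ lands precisely in ${\mathcal L}_n$, and in fact in ${\mathcal L}_n\setminus{\mathcal L}_{n-1}$ upon inspecting leading terms). If one prefers to stay within the vectorial formalism of Section \ref{SecOLP2VRR}, the leading term of $\varphi_{n,k}$ is a nonzero multiple of $p_{n-k,k}$, so the vector $\varphi_n:=(\varphi_{n,0},\dots,\varphi_{n,n})^T$ satisfies $\varphi_n=D_n\phi_n+\sum_{i<n}A_i^{(n)}\phi_i$ with $D_n$ a regular diagonal matrix, whence $\Span\{\varphi_0,\dots,\varphi_n\}={\mathcal L}_n$ by induction and $\{\varphi_n\}_{n\geq 0}$ is a system of orthonormal Laurent polynomials in the earlier sense.
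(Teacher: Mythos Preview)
Your proof is correct and follows essentially the same approach as the paper: Fubini's theorem for the orthonormality, and degree tracking in the balanced ordering to establish the spanning property. The only cosmetic difference is that the paper asserts the sharper inclusion $\varphi_{n,k}\in{\mathcal L}_n\setminus{\mathcal L}_{n-1}$ directly (and leaves the resulting basis conclusion implicit), whereas you prove $\varphi_{n,k}\in{\mathcal L}_n$ and make the dimension count $\sum_{m=0}^n(m+1)=\dim{\mathcal L}_n$ explicit; your version is if anything a touch more careful on the spanning step.
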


\begin{proof}
Recall ${\mathcal L}_k=\Span \left\{ \phi_0,\ldots,\phi_k \right\}$. It is clear from the construction of the ordering (\ref{matrixL}) and (\ref{L})-(\ref{phik}) that $\psi_{n-k}^{(1)}(x) \in {\mathcal L}_{n-k} \backslash {\mathcal L}_{n-k-1}$, $\psi_{k}^{(2)}(y) \in {\mathcal L}_{k} \backslash {\mathcal L}_{k-1}$ and $\psi_{n-k}^{(1)}(x)\psi_k^{(2)}(y) \in  {\mathcal L}_{n}\backslash {\mathcal L}_{n-1}$.
Thus, for fixed $n\geq 0$ and $k,l \in \{0,\ldots,n \}$, it is clear from Fubini's theorem that
$$\langle \psi_{n-k}^{(1)}(x)\psi_k^{(2)}(y) , \psi_{n-l}^{(1)}(x)\psi_l^{(2)}(y) \rangle_{\mu} =\langle \psi_{n-k}^{(1)}(x) , \psi_{n-l}^{(1)}(x) \rangle_{\mu_1} \cdot \langle \psi_{k}^{(2)}(y) , \psi_{l}^{(2)}(y) \rangle_{\mu_2}=\delta_{k,l}.$$
Also for $n \neq m$, $n,m \geq 0$, $k \in \{0\ldots,n\}$ and $l \in \{0\ldots,m\}$, we get by the same reason
$$\langle \psi_{n-k}^{(1)}(x)\psi_k^{(2)}(y) , \psi_{m-l}^{(1)}(x)\psi_l^{(2)}(y) \rangle_{\mu} = \langle \psi_{n-k}^{(1)}(x) , \psi_{m-l}^{(1)}(x) \rangle_{\mu_1} \cdot \langle \psi_{k}^{(2)}(y) , \psi_{l}^{(2)}(y) \rangle_{\mu_2}=0.$$
This concludes the proof.
\end{proof}

The aim of this section is to make use of Theorem \ref{recu1var} and Proposition \ref{aconnection} to obtain explicitly the relations \eqref{5TR} in this particular situation. We start with the following
\begin{lemma}\label{onevariable}
Let $\{ \Omega_n^{(2)} \}_{n \geq 0}$ and $\{ C_n^{(2)} \}_{n \geq 0}$ be the sequences of positive real numbers appearing in Theorem \ref{recu1var}, associated with the measure $d\mu_2$. Then, under the above conditions the family $\{ \varphi_n \}_{n \geq 0}$ satisfies the recurrence realations
\begin{equation}\label{onevariable1}
\begin{aligned}
&C_{2m}^{(2)}\varphi_{n,2m+1}(x,y)=\left( \Omega_{2m}^{(2)}y-1 \right)\varphi_{n-1,2m}(x,y)-C_{2m-1}^{(2)}\varphi_{n-2,2m-1}(x,y), \quad 0 \leq 2m-1 \leq n-2,
\\[8pt]
&C_{2m+1}^{(2)}\varphi_{n+1,2m+2}(x,y)=\left(1-\frac{\Omega_{2m+1}^{(2)}}{y} \right)\varphi_{n,2m+1}(x,y)-C_{2m}^{(2)}\varphi_{n-1,2m}(x,y), \quad 0 \leq 2m \leq n-1,
\\[8pt]
&C_0^{(2)}\varphi_{1,1}(x,y)=\left( \Omega_0^{(2)}y-1\right)\varphi_{0,0}(x,y).
\end{aligned}
\end{equation}
\end{lemma}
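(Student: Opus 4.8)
The statement is just a bookkeeping translation of Theorem~\ref{recu1var}, applied to the second measure $\mu_2$, into the two-variable language provided by Proposition~\ref{aconnection}. The plan is to substitute $\varphi_{n,k}(x,y)=\psi_{n-k}^{(1)}(x)\psi_k^{(2)}(y)$ everywhere and factor out the common $x$-dependent factor $\psi_{n-k}^{(1)}(x)$, reducing each asserted identity to the corresponding one-variable recurrence for $\{\psi_k^{(2)}\}_{k\ge 0}$.

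First I would fix $m$ and observe that in the first identity of \eqref{onevariable1} every term carries the factor $\psi_{(n)-(2m+1)}^{(1)}(x)=\psi_{n-2m-1}^{(1)}(x)$: indeed $\varphi_{n,2m+1}$ has first index drop $n-(2m+1)$, $\varphi_{n-1,2m}$ has $ (n-1)-2m = n-2m-1$, and $\varphi_{n-2,2m-1}$ has $(n-2)-(2m-1)=n-2m-1$ as well. So the first line of \eqref{onevariable1} is exactly $\psi_{n-2m-1}^{(1)}(x)$ times
$$
C_{2m}^{(2)}\psi_{2m+1}^{(2)}(y)=\bigl(\Omega_{2m}^{(2)}y-1\bigr)\psi_{2m}^{(2)}(y)-C_{2m-1}^{(2)}\psi_{2m-1}^{(2)}(y),
$$
which is precisely the ``$n$ even'' branch of Theorem~\ref{recu1var} with $n=2m$ and $\psi=\psi^{(2)}$. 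The hypothesis $0\le 2m-1\le n-2$ guarantees $2m-1\ge 0$ (so $\psi_{2m-1}^{(2)}$ is defined, and $\psi_{-1}\equiv 0$ covers the boundary) and $n-2m-1\ge 0$ (so the $x$-factor $\psi_{n-2m-1}^{(1)}$ makes sense). The same check applies to the second identity: the first indices of $\varphi_{n+1,2m+2}$, $\varphi_{n,2m+1}$, $\varphi_{n-1,2m}$ are $n-2m-1$ in all three cases, so after cancelling $\psi_{n-2m-1}^{(1)}(x)$ one is left with the ``$n$ odd'' branch of Theorem~\ref{recu1var} with $n=2m+1$. Finally, the third identity is the $n=0$ initial condition: $\varphi_{1,1}=\psi_0^{(1)}(x)\psi_1^{(2)}(y)$ and $\varphi_{0,0}=\psi_0^{(1)}(x)\psi_0^{(2)}(y)$, and cancelling $\psi_0^{(1)}(x)$ reduces it to $C_0^{(2)}\psi_1^{(2)}(y)=(\Omega_0^{(2)}y-1)\psi_0^{(2)}(y)$, i.e.\ Theorem~\ref{recu1var} at $n=0$ with $\psi_{-1}\equiv 0$.

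There is no real obstacle here; the only thing requiring care is the index arithmetic, namely verifying that the first subscript $n-k$ really is constant across each recurrence and that the stated ranges of $m$ in terms of $n$ are exactly what is needed for all the $\psi^{(2)}_\bullet$ appearing to be legitimate (with the convention $\psi_{-1}^{(2)}\equiv 0$) and for all the $\psi^{(1)}_\bullet$ factors to have nonnegative index. Once that is checked, multiplying the three displayed one-variable recurrences for $\mu_2$ through by the appropriate $\psi_{n-2m-1}^{(1)}(x)$ (respectively $\psi_0^{(1)}(x)$) yields \eqref{onevariable1} verbatim, which completes the proof.
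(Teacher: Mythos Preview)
Your proposal is correct and follows essentially the same approach as the paper: start from the one-variable recurrence of Theorem~\ref{recu1var} for $\{\psi_k^{(2)}\}$ and multiply through by the common factor $\psi_{n-2m-1}^{(1)}(x)$, invoking Proposition~\ref{aconnection} to identify the products as $\varphi_{n,k}$. Your version is in fact more detailed than the paper's, which carries out only the first identity explicitly and dismisses the other two as similar.
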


\begin{proof}
From Theorem \ref{recu1var} we have $$C_{2m}^{(2)}\psi_{2m+1}^{(2)}(y)=\left( \Omega_{2m}^{(2)}y-1 \right)\psi_{2m}^{(2)}(y)-C_{2m-1}^{(2)}\psi_{2m-1}^{(2)}(y),$$ and multiplying in both sides of this equality by $\psi_{n-(2m+1)}^{(1)}(x)$ we get from Proposition \ref{aconnection} the first relation in (\ref{onevariable1}). We can prove the second and third relations in (\ref{onevariable1}) proceeding in a similar way.
\end{proof}

\begin{remark}
A similar result can be proved involving only the coefficients $\{ \Omega_n^{(1)} \}_{n \geq 0}$ and $\{ C_n^{(1)} \}_{n \geq 0}$ related to the measure $d\mu_1$, we omit these details. It should be clear to the reader that despite the recurrence in Lemma \ref{onevariable} only involves the coefficients related to the measure $d\mu_2$, there is no relation between the families $\{ \varphi_n \}_{n \geq 0}$ and $\{ \tilde{\varphi}_n \}_{n \geq 0}$ associated with two measures of the form $d\mu(x,y)=d\mu_1(x)d\mu_2(y)$ and $d\tilde{\mu}(x,y)=d\tilde{\mu}_1(x)d\mu_2(y)$ respectively, since the influence of the first measure is due to
\begin{equation}\label{influence}
\begin{array}{lcl}\varphi_{0,0} \equiv \frac{1}{\sqrt{m_0^{(1)}m_0^{(2)}}}, && \\ \\ \varphi_{n,0}(x,y)=\frac{1}{\sqrt{m_0^{(2)}}}\psi_n^{(1)}(x), &\;& \varphi_{n,1}(x,y)=\frac{1}{C_0^{(2)}\sqrt{m_0^{(2)}}}\left( \Omega_0^{(2)}y - 1 \right)\psi_{n-1}^{(1)}(x), \quad \forall n \geq 1.
\end{array}
\end{equation}
From Lemma \ref{onevariable} we see actually how the combination of (\ref{influence}) and the relations in (\ref{onevariable1}) let us compute the full sequence $\{ \varphi_n \}_{n\geq 0}$.
\end{remark}

Next, let us see how explicit expressions for the matrices ${\mathcal F}_1$ and ${\mathcal F}_2$ in (\ref{representation1x})-(\ref{representation2x}) and (\ref{representation1y}) respectively, can be found from Lemma \ref{onevariable}. We present a proof for ${\mathcal F}_2$, the corresponding for ${\mathcal F}_1$ follows in a similar way.

\begin{theorem}
Under the above conditions, for $l \geq 0$,  let us introduce the constants
$$
\Gamma_l=\frac{C_l^{(2)}C_{l+1}^{(2)}}{\Omega_{l+1}^{(2)}}>0, \quad \Delta_l=(-1)^lC_l^{(2)}\left( \frac{1}{\Omega_{l}^{(2)}} - \frac{1}{\Omega_{l+1}^{(2)}}\right), \quad \Xi_l=\Omega_l^{(2)}+\frac{1}{\Omega_l^{(2)}} + \frac{\left( C_l^{(2)} \right)^2}{\Omega_{l+1}^{(2)}} + \frac{\left( C_{l-1}^{(2)} \right)^2}{\Omega_{l-1}^{(2)}}>0,
$$
with $C_{-1}^{(2)}=0$ and $\Omega_{-1}^{(2)}$ an arbitrary nonzero constant. Then, the matrix ${\mathcal F}_2$ in (\ref{representation1y}) is explicitly given for all $n \geq 1$ by
$$\begin{array}{ccl} D_{n+1,2}^{(n-1)} &=& \left( {\mathcal O}_{n,2} \;|\; \mathrm{diag}\left( \Gamma_0, \Gamma_1,\ldots, \Gamma_{n-1}  \right) \right), \\ D_{n,2}^{(n-1)} &=& \left( {\mathcal O}_{n,1} \;|\; \mathrm{diag}\left( \Delta_0, \Delta_1,\ldots, \Delta_{n-1}  \right) \right), \\
D_{n-1,2}^{(n-1)} &=& \mathrm{diag}\left( \Xi_0 ,\ldots, \Xi_{n-1}  \right).\end{array}$$
The matrices $D_{n-2,2}^{(n-1)}$ ($n \geq 2$) and $D_{n-3,2}^{(n-1)}$ ($n \geq 3$) are the transpose of $D_{n-1,2}^{(n-2)}$ and $D_{n-1,2}^{(n-3)}$.
\end{theorem}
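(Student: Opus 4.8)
The plan is to extract the matrices $D_{n+1,2}^{(n-1)}$, $D_{n,2}^{(n-1)}$ and $D_{n-1,2}^{(n-1)}$ directly from the action of the operator $y + \frac{1}{y}$ on the $\varphi_{n,k}$, using the factorized structure from Proposition \ref{aconnection} together with the one-variable recurrences of Theorem \ref{recu1var}. Concretely, since $\varphi_{n,k}(x,y) = \psi_{n-k}^{(1)}(x)\psi_k^{(2)}(y)$, multiplication by $y+\frac{1}{y}$ affects only the second factor, so it suffices to compute $\left(y + \frac{1}{y}\right)\psi_k^{(2)}(y)$ as a linear combination of $\psi_{k-2}^{(2)}, \psi_{k-1}^{(2)}, \psi_k^{(2)}, \psi_{k+1}^{(2)}, \psi_{k+2}^{(2)}$ and then read off the entries. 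The vector $\varphi_{n-1} = (\varphi_{n-1,0}, \ldots, \varphi_{n-1,n-1})^T$ has components indexed by $k = 0,\ldots,n-1$, and because multiplication by $y$ preserves $x$-degree, the coefficient matrices are \emph{diagonal} (up to the index shift that places $\varphi_{n+1,k+2}$ against $\varphi_{n-1,k}$, $\varphi_{n,k+1}$ against $\varphi_{n-1,k}$, etc.), which is exactly why the stated answer is built from $\mathrm{diag}(\cdot)$ blocks padded by zero columns ${\mathcal O}_{n,2}$ or ${\mathcal O}_{n,1}$.

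The key computation is therefore the scalar one: from the two cases of Theorem \ref{recu1var} (applied to $\mu_2$) one solves for $y\psi_k^{(2)}$ and $\frac{1}{y}\psi_k^{(2)}$ and adds them. First I would invert the even-index recurrence $C_k^{(2)}\psi_{k+1}^{(2)} = (\Omega_k^{(2)} y - 1)\psi_k^{(2)} - C_{k-1}^{(2)}\psi_{k-1}^{(2)}$ to express $y\psi_k^{(2)}$ (for $k$ even) in terms of $\psi_{k+1}^{(2)}, \psi_k^{(2)}, \psi_{k-1}^{(2)}$, and similarly invert the odd-index recurrence $C_k^{(2)}\psi_{k+1}^{(2)} = (1 - \frac{\Omega_k^{(2)}}{y})\psi_k^{(2)} - C_{k-1}^{(2)}\psi_{k-1}^{(2)}$ to get $\frac{1}{y}\psi_k^{(2)}$ (for $k$ odd). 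To obtain $\frac{1}{y}\psi_k^{(2)}$ when $k$ is even and $y\psi_k^{(2)}$ when $k$ is odd, I would instead use the recurrence with index $k$ replaced by $k-1$ or $k+1$ (whichever has the right parity) — e.g. the odd recurrence at index $k+1$ gives $C_{k+1}^{(2)}\psi_{k+2}^{(2)} = (1 - \frac{\Omega_{k+1}^{(2)}}{y})\psi_{k+1}^{(2)} - C_k^{(2)}\psi_k^{(2)}$, letting one solve for $\frac{1}{y}\psi_{k+1}^{(2)}$, and analogously at the other index. Collecting $\left(y + \frac{1}{y}\right)\psi_k^{(2)}$ from these pieces, the coefficient of $\psi_{k+2}^{(2)}$ should simplify to $\Gamma_k$, the coefficient of $\psi_{k+1}^{(2)}$ to $\Delta_k$ (the sign $(-1)^k$ coming from the alternating definition $c_n = (-1)^{n+1}E[\frac{n+1}{2}]$ embedded in the balanced ordering, i.e. from the parity-dependent form of the recurrence), and the coefficient of $\psi_k^{(2)}$ to $\Xi_k$; the $\psi_{k-1}^{(2)}$ and $\psi_{k-2}^{(2)}$ coefficients then fall out by the symmetry $D_{s,2}^{(n)} = (D_{n,2}^{(s)})^T$ already established in \eqref{symmetryD2}, which is what the last sentence of the statement asserts. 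The positivity of $\Gamma_l$ and $\Xi_l$ is immediate from positivity of the $\Omega_j^{(2)}, C_j^{(2)}$ in Theorem \ref{recu1var}, and the conventions $C_{-1}^{(2)} = 0$, $\Omega_{-1}^{(2)}$ arbitrary nonzero handle the boundary term $\Xi_0$ cleanly (the offending summand vanishes).

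After the scalar identity is in hand, I would multiply through by $\psi_{n-1-k}^{(1)}(x)$ — noting that $\left(y+\frac{1}{y}\right)\varphi_{n-1,k} = \psi_{n-1-k}^{(1)}(x)\left(y+\frac{1}{y}\right)\psi_k^{(2)}(y)$ — and rewrite each $\psi_{n-1-k}^{(1)}(x)\psi_{k+j}^{(2)}(y)$ as $\varphi_{n-1+j,k+j}(x,y)$, which is precisely the component arrangement of the vectors $\varphi_{n+1}, \varphi_n, \varphi_{n-1}, \varphi_{n-2}, \varphi_{n-3}$ appearing in the five-term relation \eqref{rec2}. Matching the resulting identity against \eqref{rec2} row by row then yields the claimed forms of $D_{n+1,2}^{(n-1)}$, $D_{n,2}^{(n-1)}$, $D_{n-1,2}^{(n-1)}$, with the zero-column padding reflecting that $\varphi_{n+1}$ has two and $\varphi_n$ has one more component than $\varphi_{n-1}$, and that $y\psi_k^{(2)}$ never reaches $\psi_{k+2}^{(2)}$ for the first two values of $k$ nor $\psi_{k+1}^{(2)}$ for the first value, so the top-left corners are zero. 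The main obstacle will be bookkeeping: keeping the two parity cases straight, correctly tracking which shifted-index recurrence supplies which term, and verifying that after summation the parity dependence collapses exactly into the single closed forms $\Gamma_l, \Delta_l, \Xi_l$ (in particular that the $(-1)^l$ survives only in $\Delta_l$ and cancels in $\Gamma_l$ and $\Xi_l$); this is a finite but delicate algebraic verification rather than a conceptual difficulty. The statement about $D_{n-2,2}^{(n-1)}$ and $D_{n-3,2}^{(n-1)}$ being transposes requires no further work beyond invoking \eqref{symmetryD2}.
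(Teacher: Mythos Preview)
Your proposal is correct and follows essentially the same route as the paper: both arguments compute $\left(y+\tfrac{1}{y}\right)\psi_k^{(2)}$ by inverting the parity-dependent one-variable recurrences of Theorem~\ref{recu1var} for the neighbouring indices, substitute, and collect terms into $\Gamma_l,\Delta_l,\Xi_l$. The only cosmetic difference is that the paper carries the factor $\psi_{n-1-k}^{(1)}(x)$ throughout (working with $\varphi_{n-1,k}$ via Lemma~\ref{onevariable}) and treats the boundary rows $l=0,1$ by separate short computations, whereas you do the purely scalar calculation first and then tensor up; this is the same argument in a different order.
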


\begin{proof}
The initial conditions $D_{s,2}^{(n)}$ with $n=0$ ($s=0,1,2$) and $n=1$ ($s=0,1,2,3$) are deduced by direct computations derived from Theorem \ref{recu1var} and Proposition \ref{aconnection}, in an analog procedure as in (\ref{initialprocedure0})-(\ref{initialprocedure1}).

For $n \geq 2$ we have to consider separately the cases that involves the two-term relation for $\psi_0^{(2)}$ and $\psi_1^{(2)}$ in Theorem \ref{recu1var}. So, from Theorem \ref{recu1var} and Proposition \ref{aconnection} we can write
\begin{equation}\label{onevariable1ini}
C_0^{(2)}\varphi_{n,1}=\left(\Omega_0^{(2)}y-1\right)\varphi_{n-1,0} \quad \Rightarrow \quad \frac{1}{y}\varphi_{n-1,0}=\Omega_0^{(2)}\varphi_{n-1,0}- C_0^{(2)}\frac{1}{y}\varphi_{n,1}
\end{equation}
and
\begin{equation}\label{onevariable2ini}C_1^{(2)}\varphi_{n+1,2}=\left(1-\frac{\Omega_1^{(2)}}{y}\right)\varphi_{n,1}-C_0^{(2)}\varphi_{n-1,0} \;\; \Rightarrow \;\; \frac{1}{y}\varphi_{n,1}=\frac{1}{\Omega_1^{(2)}}\varphi_{n,1}-\frac{C_1^{(2)}}{\Omega_1^{(2)}}\varphi_{n+1,2}-\frac{C_0^{(2)}}{\Omega_1^{(2)}}\varphi_{n-1,0}.
\end{equation}
If we substitute in (\ref{onevariable1ini}) the term $\frac{1}{y}\varphi_{n,1}$ in (\ref{onevariable2ini}) and we add $y\varphi_{n-1,0}$ we get
$$\left( y + \frac{1}{y} \right)\varphi_{n-1,0} = \Xi_0\varphi_{n-1,0}+\Delta_0\varphi_{n,1} + \Gamma_0 \varphi_{n+1,2}.$$
In a similar way, we can write (\ref{onevariable2ini}) with $n$ replaced by $n-1$ as
\begin{equation}\label{onevariable3ini}
y\varphi_{n-1,1}=C_1^{(2)}y\varphi_{n,2}+\Omega_1^{(2)}\varphi_{n-1,1}+C_0^{(2)}y\varphi_{n-2,0}
\end{equation}
and we also have from Theorem \ref{recu1var} and Proposition \ref{aconnection} the relation
\begin{equation}\label{onevariable4ini}
y\varphi_{n,2}=\frac{C_2^{(2)}}{\Omega_2^{(2)}}\varphi_{n+1,3}+\frac{1}{\Omega_2^{(2)}}\varphi_{n,2}+\frac{C_1^{(2)}}{\Omega_2^{(2)}}\varphi_{n-1,1}.
\end{equation}
If we substitute in (\ref{onevariable3ini}) the terms $y\varphi_{n,2}$ in (\ref{onevariable4ini}) and $y\varphi_{n-2,0}$ in (\ref{onevariable1ini}) with $n$ replaced by $n-1$ we get
$$\left( y + \frac{1}{y} \right)\varphi_{n-1,1} = \Gamma_1\varphi_{n+1,3} + \Delta_1\varphi_{n,2} + \Xi_1\varphi_{n-1,1} + \Delta_0\varphi_{n-2,0}.$$

For the general case $n \geq 3$ and $l=2,3,\ldots,n-1$ we start writing the first equation of (\ref{onevariable1}) as
\begin{equation}\label{onevariable1b}
\frac{1}{y}\varphi_{n-1,2m}(x,y) = -C_{2m}^{(2)}\frac{1}{y}\varphi_{n,2m+1}(x,y)+\Omega_{2m}^{(2)}\varphi_{n-1,2m}(x,y)-C_{2m-1}^{(2)}\frac{1}{y}\varphi_{n-2,2m-1}(x,y)
\end{equation}
and the second equation of (\ref{onevariable1}) as
\begin{equation}\label{onevariable2b}
\begin{aligned}
\frac{1}{y}\varphi_{n,2m+1}(x,y) = {\displaystyle \frac{1}{\Omega_{2m+1}^{(2)}}} & \left[ -C_{2m+1}^{(2)}\varphi_{n+1,2m+2}(x,y) +\varphi_{n,2m+1}(x,y)-C_{2m}^{(2)}\varphi_{n-1,2m}(x,y)\right], 
\\[10pt]
\frac{1}{y}\varphi_{n-2,2m-1}(x,y) =  \frac{1}{\Omega_{2m-1}^{(2)}} & \left[ -C_{2m-1}^{(2)}\varphi_{n-1,2m}(x,y) \right.
\\[10pt]
& \left. +\varphi_{n-2,2m-1}(x,y)-C_{2m-2}^{(2)}\varphi_{n-3,2m-2}(x,y)\right].
\end{aligned}
\end{equation}
Thus, if we substitute (\ref{onevariable2b}) in (\ref{onevariable1b}) and we add the term $y\varphi_{n-1,2m}(x,y)$ from the first equation of (\ref{onevariable1}) we get
$$
\begin{aligned}
\left( y + \frac{1}{y} \right)\varphi_{n-1,2m}(x,y) = &\Gamma_{2m}\varphi_{n+1,2m+2}(x,y) 
 + \Delta_{2m}\varphi_{n,2m+1}(x,y)+\Xi_{2m}\varphi_{n-1,2m}(x,y) 
 \\
&+\Delta_{2m-1}\varphi_{n-2,2m-1}(x,y)+\Gamma_{2m-2}\varphi_{n-3,2m-2}(x,y).
\end{aligned}
$$
A similar relation is obtained for $\left( y + \frac{1}{y} \right)\varphi_{n-1,2m-1}(x,y)$, yielding for all $l=2,3,\ldots,n-1$ and $n \geq 3$,
$$
\begin{aligned}
\left( y + \frac{1}{y} \right)\varphi_{n-1,l}(x,y) = &\Gamma_{l}\varphi_{n+1,l+2}(x,y) 
+ \Delta_{l}\varphi_{n,l+1}(x,y)+\Xi_l\varphi_{n-1,l}(x,y)
\\
& + \Delta_{l-1}\varphi_{n-2,l-1}(x,y)+\Gamma_{l-2}\varphi_{n-3,l-2}(x,y).
\end{aligned}
$$
\end{proof}

As it is indicated in \cite[Section 2]{CDM4}, the families $\{ \psi_k \}_{k\geq 0}$ of orthogonal Laurent polynomials in the one variable case computed from Theorem \ref{recu1var} are related to the families of ordinary polynomials satisfying Laurent orthogonal conditions (that have been considered in the literature, e.g. by A. Sri Ranga and collaborators, see \cite{ranga1,ranga2,ranga3,ranga4}). So, we can make use of some of the results available in those references to get explicit expressions for the coefficients $\{ \Omega_n \}_{n \geq 0}$ and $\{ C_n \}_{n \geq 0}$ related to some particular absolutely continuous measures, like
$$
\begin{array}{lllll}
{\displaystyle d\omega_1(x)=\frac{dx}{\sqrt{(b-x)(x-a)}},} & &  {\displaystyle d\omega_2(x)=\frac{dx}{\sqrt{x}},} 
 \\[15pt]
 {\displaystyle d\omega_3^{\mu}(x)=\frac{[(b-x)(x-a)]^{\mu-\frac{1}{2}}}{\left( \sqrt{b} - \sqrt{a} \right) x^{\mu}}dx,}
 & & 
{\displaystyle d\omega_4(x)=\frac{x\left( 1 + \frac{\sqrt{ab}}{x} \right)^2}{\sqrt{(b-x)(x-a)}}dx, \;\;} 
 \\[15pt]
 {\displaystyle d\omega_5(x)=\frac{dx}{\left( x + \sqrt{ab} \right)\sqrt{(b-x)(x-a)}},\;\;} & &{\displaystyle d\omega_6^{\kappa}(t)=\frac{1}{2\kappa \sqrt{\pi}}\left( 1+\frac{1}{t} \right)e^{-\left( \frac{\log(t)}{2\kappa} \right)^2}}dt,
\end{array}
$$
with $x \in (a,b)$, $0<a<b<\infty$, $t>0$, $\kappa>0$ and $\mu > -1/2$. Thus, if the measure $d\mu(x,y)=d\mu_1(x)d\mu_2(y)$ defined on the rectangle ${\mathcal R}$ is of the form $\mu_1,\mu_2 \in \{ \omega_i \}_{i=1}^{6}$ we can recover directly from the results of this section the recurrence relations for $\{ \varphi_n \}_{n \geq 0}$ explicitly.

\section{Applications for future research}\label{Sec-App}

We conclude this paper with two applications to the {\em Theory of Orthogonal Laurent Polynomials of Two Real Variables} that we have introduced for future research.

\subsection{Applications to cubature formulas}\label{Appl1}

Let us suppose that we are interested in the numerical estimation of integrals of the form $I_{\mu}(f)=\iint_{D} f(x,y)d\mu(x,y)$, being $\mu$ a positive Borel measure supported on $D\subset \RR^2$ such that $\{ x=0 \}\cup \{ y=0 \} \not\in D$. The multi-dimensional analogue of the well known {\em Gaussian quadrature formulas} for the one-dimensional case are called {\em Gaussian cubature formulas}, see e.g. \cite{Xu} (Sections 3.7-3.8), \cite{cools,harris,lasse} and references therein. These rules are of the form
\begin{equation}\label{qf}
I_n(f)=\sum_{i=1}^{n} \lambda_i f(x_i), \quad \quad x_i \subset D, \; x_i \neq x_j \;\;\textrm{for all} \;\;i \neq j, \quad i,j=1,\ldots,n,
\end{equation}
where $\{x_i\}_{i=1}^{n}$ and $\{\lambda_i\}_{i=1}^{n}$ are called the set of nodes and weights (or coefficients), respectively. As it was mentioned in the Introduction, it has been numerically proven in the literature that when the integrand presents singularities near the subset of integration, quadrature formulas based on Laurent polynomials are preferable compared to the classical Gaussian rules. Having this in mind and from the results presented in this paper, it results of interest to consider new techniques of numerical integration based on Laurent polynomials of two (or more) real variables, which we could coin as {\em L-Gaussian cubature formulas}.

The cubature rule $I_n(f)$ has (exact) degree of precision $s$ if $I_{\mu}(P)=I_n(P)$ for all polynomials $P$ of degree less than or equal to $s$, and it is not exact for some polynomial of degree $s+1$. In the one-dimensional case, quadrature formulas are usually constructed of interpolatory-type: for a fixed set of $n$ nodes, the rule is obtained by integrating exactly the unique interpolatory polynomial to $f$ at these nodes in its Lagrange form, and the degree of precision of the resulting rule is at least $n-1$. The set of nodes can be adequately fixed to increase the degree of precision up to $2n-1$ (Gaussian quadrature formula, that always exist and it is unique). The set of nodes in this case are the zeros of a $n$-th orthogonal polynomial with respect to $\mu$, and the corresponding weights are positive, that is of interest due to convergence and stability reasons. All these ideas have been translated successfully to the Laurent case (see e.g. \cite{CDM2}), where the rules are obtained by looking for exactness in subspaces of Laurent polynomials (in particular, the balanced ones).

As indicated in the survey published by Cools et al. \cite{cools} in 2001 about cubature formulae, in more than one dimension things look worse and there are more questions than answers. However, some progress have been made, and several results of a certain generality have been found. The algebraic manifold generated by a polynomial $P(x,y)$ of exact degree $s$ is the set $\left\{ (x,y)\in \CC^2 : P(x,y)=0 \right\}$. A cubature formula (\ref{qf}) of degree of precision $s$ is interpolatory if the nodes do not lie on an algebraic manifold of degree $s$ and the coefficients are uniquely determined by the nodes (integrals of Lagrange Laurent polynomials). The number of nodes $n$ should be in this case the number of linearly independent polynomials of degree less than or equal to $s$ that, in the bi-variate case, is $\frac{1}{2}(s+1)(s+2)$; but in more than one dimension, $n$ might be lower since some of the weights may vanish. This motivates the concept of interpolatory minimal cubature rule: for fixed $s$, $n$ is minimal.

In the $k$-dimensional case, by a Gaussian cubature formula (see Section 3.8 of \cite{Xu}) we understand a rule having odd degree of precision $2p-1$ ($p\in \NN$) and minimal number of nodes: ${p+k-1\choose k}$. From 1970 until 2012, the only necessary and sufficient result available in the literature for the existence of a $k$-dimensional Gaussian cubature formula of degree $2p-1$  was due to Mysovskikh \cite{Mys}: the rule exists, if and only if, the orthogonal polynomials of degree $p$ have exactly ${p+k-1\choose k}$ common zeros (nodes of the cubature). Similar to the one-dimensional case, these common zeros can be computed as all joint eigenvalues of $k$ certain associated Jacobi matrices, see  Theorem 3.7.2 in \cite{Xu}. The main drawback of this characterization is that it is known that in many cases, orthogonal polynomials does not have distinct common zeros, and only for a very few particular situations it is known that they share common zeros (see Corollary 3.7.7 and Section 5.4 in \cite{Xu}). Inspired on these negative known results, we think that this way does not seems appropriate for the construction of cubature rules based on Laurent polynomials.

An alternative criterion was obtained in 2012 by Lasserre \cite{lasse}: a $k$-dimensional Gaussian cubature formula of degree $2p-1$ exists, if and only if, a certain overdetermined linear system of equations has a solution. The coefficient matrix of this linear system comes from expressing the product of two orthonormal polynomials $P_1,P_2$ of degree $p$ in the basis of orthonormal polynomials of degree up to $2p$. So, it is an open question if Theorem 3.1 in \cite{lasse} can be adapted to the orthonormal basis of Laurent polynomials that we have introduced in this paper. We should note in this regard a key difference in our case, which is the fact that if $L_s,\tilde{L}_s \in {\mathcal L}_{s} \backslash {\mathcal L}_{s-1}$, then $L_s\cdot \tilde{L}_s \in {\mathcal L}_{2s+2}$. This property for Laurent polynomials, which does not hold obviously for ordinary polynomials, can be directly obtained from the relations
\begin{equation}\label{crelations}
c_{2s}+c_{2t}=c_{2(s+t)}, \quad c_{2s-1}+c_{2t-1}=c_{2(s+t)-1}, \quad c_{2s}+c_{2t-1}=\left\{ \begin{array}{crl} c_{2(t-s)-1} &\textrm{if} & t-s>0, \\ c_{2(s-t)} &\textrm{if} & t-s \leq 0. \end{array} \right.
\end{equation}
where we recall that the sequence $\{c_n\}_{n\ge 0}$ is defined in \eqref{defc}.

In 2015, a further characterization for the existence of a $k$-dimensional Gaussian cubature formula of degree $2p-1$ when the nodes of the formula have Lagrange polynomials of degree at most $p$, was given by Harris \cite{harris}. The main condition is that the Lagrange polynomial at each node is a scalar multiple of the reproducing kernel of degree $p-1$ evaluated at the nodes plus an orthogonal polynomial of degree $p$. Stronger conditions are given for the case where the cubature is exact for polynomials of degree up to $2p$. In that paper, two particular Gaussian cubature rules are constructed, based on Geronimus and Morrow-Patterson nodes. We believe that this third approach could be another way to characterize L-Gaussian cubature formulas. Actually, we can extend to the Laurent context the main result of Harris paper (Lemma 1 in \cite{harris}), that is an equivalence between a cubature formula and a formula for the Lagrange polynomials for the nodes (see Lemma \ref{Harris} further). We need first the following technical result, that in the ordinary polynomial situation is trivial.
\begin{lemma}\label{Harrisprev}
Every Laurent polynomial $L \in {\mathcal L}_{2p-1}$ ($p \in \NN$, $p\geq 2$) is a linear combination of monomials of the form $l_1\cdot l_2$, with $l_1 \in {\mathcal L}_{p-1}$ and $l_2 \in {\mathcal L}_{p}$.
\end{lemma}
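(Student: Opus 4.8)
The plan is to reduce the claim to a single Laurent monomial and then split each of its two exponents with the help of the identities \eqref{crelations}. By linearity it is enough to treat $L=x^{c_m}y^{c_n}$ with $m,n\ge 0$; the hypothesis $L\in{\mathcal L}_{2p-1}$ is then exactly $m+n\le 2p-1$, and it suffices to find indices $a_1,a_2,b_1,b_2\ge 0$ with $c_{a_1}+c_{a_2}=c_m$, $c_{b_1}+c_{b_2}=c_n$, $a_1+b_1\le p-1$ and $a_2+b_2\le p$: for then $l_1:=x^{c_{a_1}}y^{c_{b_1}}\in{\mathcal L}_{p-1}$, $l_2:=x^{c_{a_2}}y^{c_{b_2}}\in{\mathcal L}_p$ and $l_1 l_2=L$. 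Since ${\mathcal L}_k$ is invariant under interchanging $x$ and $y$, I may assume $m\le n$, so that $m\le p-1$.

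The first attempt is to keep the whole of $x^{c_m}$ on the left, i.e.\ to take $a_1=m$, $a_2=0$; it then remains to split $c_n=c_{b_1}+c_{b_2}$ with $b_1\le q_1:=p-1-m$ and $b_2\le q_2:=p$, where $q_1+q_2=2p-1-m\ge n$. If $n$ is odd (so $c_n>0$), the second identity in \eqref{crelations} gives $c_n=c_{b_1}+c_{b_2}$ with $b_1,b_2$ odd and $b_1+b_2=n-1$, besides the trivial $c_n=c_n+c_0$; if $n$ is even (so $c_n\le 0$), the first identity gives $c_n=c_{b_1}+c_{b_2}$ with $b_1,b_2$ even and $b_1+b_2=n$. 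A short parity count — comparing $\lfloor q_1/2\rfloor+\lfloor q_2/2\rfloor$ (respectively the largest odd integers $\le q_1$ and $\le q_2$) with $n/2$ (respectively $(n-1)/2$), and feeding in $q_1+q_2=2p-1-m$ together with the parity of $m+n$ — shows that an admissible pair $(b_1,b_2)$ exists in every case, \emph{except} when $p$ and $m$ are both odd and $m+n=2p-1$ (which forces $n$ to be even).

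It then remains to settle this exceptional case directly. There $c_m=(m+1)/2=:\rho\ge 1$ and $c_n=-(p-\rho)$, so $L=x^{\rho}y^{-(p-\rho)}$ with $p$ odd and $1\le\rho\le p-1$; here one also splits the $x$-exponent. If $\rho\ge 2$ I would take $l_1=x\,y^{-(p-3)/2}$ and $l_2=x^{\rho-1}y^{-((p+3)/2-\rho)}$, and if $\rho=1$ I would take $l_1=y^{-(p-1)/2}$ and $l_2=x\,y^{-(p-1)/2}$; using that $p$ is odd and $\rho\le p-1$ one checks in each case that $l_1\in{\mathcal L}_{p-1}$, $l_2\in{\mathcal L}_p$ and $l_1 l_2=L$.

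The only point that goes beyond bookkeeping is this parity obstruction: splitting a positive exponent into two positive ones lowers the total index by two, whereas a negative exponent can only be split into negative pieces with no such gain, so the naive ``all of $x$ on the left'' choice wastes exactly one unit of index budget precisely when $p$ and $m$ are odd; one repairs this by transferring part of the (positive) $x$-exponent to the right-hand factor. All remaining verifications are straightforward applications of \eqref{crelations}.
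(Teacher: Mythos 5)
Your argument is correct. I checked the parity count you only sketch: with $q_1=p-1-m$ and $q_2=p$, the greedy choice (all of $x^{c_m}$ in $l_1$, splitting only the $y$-index) fails exactly when $p$ and $m$ are odd and $m+n=2p-1$, as you claim, and your explicit repair in that case does give $l_1\in{\mathcal L}_{p-1}$, $l_2\in{\mathcal L}_p$ with $l_1l_2=L$. One small point: verifying $l_2=x^{\rho-1}y^{-((p+3)/2-\rho)}\in{\mathcal L}_p$ uses that the $y$-exponent is nonpositive, i.e.\ $\rho\le(p+3)/2$; this does not follow from $\rho\le p-1$ alone, but it does follow from your standing normalization $m\le n$ (which forces $\rho\le(2p+1)/4$), so cite that instead of $\rho\le p-1$.

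Your route differs from the paper's in organization rather than in substance. The paper also reduces to monomials and factors them via \eqref{crelations}, but it argues by induction on $p$: anything in ${\mathcal L}_{2p-3}$ is covered by the induction hypothesis, so only the components of $\phi_{2p-2}$ and $\phi_{2p-1}$ must be factored, and for those the paper writes an explicit closed-form factorization in each of the four parity cases (of $p$ and of the position $t$ in the vector), disposing of the second half of each vector by the same $x\leftrightarrow y$ symmetry you invoke. The paper's version buys a concrete factorization of every top-level monomial with no feasibility discussion; yours buys a uniform treatment of an arbitrary monomial of ${\mathcal L}_{2p-1}$ together with a conceptual explanation (the one-unit index gain available only when splitting a positive exponent) of why exactly one exceptional family appears. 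If you write yours up, include the parity count in full, since it is the entire content of the step ``an admissible pair $(b_1,b_2)$ exists in every case except\dots''.
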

\begin{proof}
We proceed by induction over $p$. It is immediate to prove the result for $p=2$ and $p=3$. If we suppose that the property holds for every Laurent polynomial in ${\mathcal L}_{2p-3}$, since every Laurent polynomial in ${\mathcal L}_{2p-1}$ is a linear combination of $\phi_{2p-1}$, $\phi_{2p-2}$ and $\psi \in {\mathcal L}_{2p-3}$, it is clear that the proof follows if we prove the result for $\phi_s$, $s \in \{2p-2,2p-1\}$. Actually, what we need is to prove it for the first $E\left[ \frac{s}{2} \right]+1$ components of the vectors $\phi_s$, otherwise the proof follows from the same reasoning by interchanging the roles of the variables $x$ and $y$.

Let us start with the case $s=2p-1$. We analyze the $(t+1)$-th component of the vector $\phi_{2p-1}$ by using the first two relations in (\ref{crelations}) and by distinguishing two cases:
\begin{itemize}
\item Case $p=2k$. If $t=2\nu$ with $0 \leq \nu \leq k-1$, then this monomial is given by
$$x^{c_{4k-1-t}}y^{c_{t}}=x^{c_{2(k+k-\nu)-1}}y^{c_{2\nu}}=\underbrace{x^{c_{2k-1}}}_{\in \phi_{p-1}}\cdot \underbrace{x^{c_{2(k-\nu)-1}}y^{c_{2\nu}}}_{\in \phi_{p-1}},$$
whereas if $t=2\nu+1$ with $0 \leq \nu \leq k-1$, then it is given by
$$x^{c_{4k-1-t}}y^{c_{t}}=x^{c_{2(k+k-1-\nu)}}y^{c_{2\nu+1}}=\underbrace{x^{c_{2k}}}_{\in \phi_{p}}\cdot \underbrace{x^{c_{2(k-1-\nu)}}y^{c_{2\nu+1}}}_{\in \phi_{p-1}}.$$
\item Case $p=2k+1$. If $t=2\nu$ with $0 \leq \nu \leq k$, then the corresponding component of the vector $\phi_{2p-1}$ is given by
$$x^{c_{4k+1-t}}y^{c_{t}}=x^{c_{2(k+1+k-\nu)-1}}y^{c_{2\nu}}=\underbrace{x^{c_{2(k+1)-1}}}_{\in \phi_{p}}\cdot \underbrace{x^{c_{2(k-\nu)-1}}y^{c_{2\nu}}}_{\in \phi_{p-2}},$$
whereas if $t=2\nu+1$ with $0 \leq \nu \leq k-1$, then it is given by
$$x^{c_{4k+1-t}}y^{c_{t}}=x^{c_{2(k+k-\nu)}}y^{c_{2\nu+1}}=\underbrace{x^{c_{2k}}}_{\in \phi_{p-1}}\cdot \underbrace{x^{c_{2(k-\nu)}}y^{c_{2\nu+1}}}_{\in \phi_{p}}.$$
\end{itemize}
The case $s=2p-2$ is similar. We analyze the $(t+1)$-th component of the vector $\phi_{2p-2}$ again from the first two relations in (\ref{crelations}) and by distinguishing two cases:
\begin{itemize}
\item Case $p=2k$. If $t=2\nu$ with $0 \leq \nu \leq k-1$, then this monomial is given by
$$x^{c_{4k-2-t}}y^{c_{t}}=x^{c_{2(k-1+k-\nu)}}y^{c_{2\nu}}=\underbrace{x^{c_{2(k-1)}}}_{\in \phi_{p-2}}\cdot \underbrace{x^{c_{2(k-\nu)}}y^{c_{2\nu}}}_{\in \phi_{p}},$$
whereas if $t=2\nu+1$ with $0 \leq \nu \leq k-1$, then it is given by
$$x^{c_{4k-2-t}}y^{c_{t}}=x^{c_{2(k-1+k-\nu)-1}}y^{c_{2\nu+1}}=\underbrace{x^{c_{2(k-1)-1}}}_{\in \phi_{p-3}}\cdot \underbrace{x^{c_{2(k-\nu)-1}}y^{c_{2\nu+1}}}_{\in \phi_{p}}.$$
\item Case $p=2k+1$. If $t=2\nu$ with $0 \leq \nu \leq k$, then the corresponding component of the vector $\phi_{2p-2}$ is given by
$$x^{c_{4k-t}}y^{c_{t}}=x^{c_{2(k+k-\nu)}}y^{c_{2\nu}}=\underbrace{x^{c_{2k}}}_{\in \phi_{p-1}}\cdot \underbrace{x^{c_{2(k-\nu)}}y^{c_{2\nu}}}_{\in \phi_{p-1}},$$
whereas if $t=2\nu+1$ with $0 \leq \nu \leq k-1$, then it is given by
$$x^{c_{4k-t}}y^{c_{t}}=x^{c_{2(k+k-\nu)-1}}y^{c_{2\nu+1}}=\underbrace{x^{c_{2k-1}}}_{\in \phi_{p-2}}\cdot \underbrace{x^{c_{2(k-\nu)-1}}y^{c_{2\nu+1}}}_{\in \phi_{p-1}}.$$
\end{itemize}
This concludes the proof
\end{proof}

\begin{lemma}\label{Harris}
Suppose that $\{ \lambda_i \}_{i=1}^{n}$ are real numbers, $\{ (x_i,y_i) \}_{i=1}^{n} \subset D$ is a set of $n$ distinct points, $p\geq 1$ and let ${\mathcal K}_{p-1}$ be the $(p-1)$-th reproducing kernel \eqref{eq-ker}. Define
$$
{\mathcal S}_p=\left\{ \eta \in {\mathcal L}_{p} \;:\; \langle \eta,\chi \rangle \;\textrm{for all}\; \chi \in {\mathcal L}_{p-1} \right\}
$$
and suppose that a corresponding set of Lagrange Laurent polynomials $\{ \zeta_i(x,y) \}_{i=1}^{n}$ exist: $\zeta_i \in {\mathcal L}_p$, $\zeta_i(x_j,y_j)=\delta_{i,j}$, $1 \leq i,j \leq n$.

Suppose that for all $\psi \in {\mathcal L}_{p}$, there exists $\eta \in {\mathcal S}_p$ such that
\begin{equation}\label{Harris1}
\psi(x,y)=\sum_{i=1}^{n} \psi(x_i,y_i)\zeta_i(x,y)+\eta(x,y)
\end{equation}
with
\begin{equation}\label{Harris2}
\zeta_i(x,y)=\lambda_i{\mathcal K}_{p-1}(x,y,x_i,y_i)+\eta_i(x,y), \quad \eta_i \in {\mathcal S}_p, \quad i=1,\ldots,n.
\end{equation}
Then
\begin{equation}\label{Harris3}
I_{\mu}(\psi)=\iint_{D} \psi(x,y)d\mu(x,y) = I_n(\psi)=\sum_{i=1}^{n} \lambda_i \psi(x_i,y_i), \quad \textrm{for all} \; \psi \in {\mathcal L}_{2p-1}.
\end{equation}
\end{lemma}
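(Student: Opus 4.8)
The plan is to mimic the classical proof of the analogous statement for ordinary polynomials (Harris, Lemma~1 in \cite{harris}), adapting the degree bookkeeping to the Laurent setting where the obstruction $L_p\cdot\tilde L_p\in{\mathcal L}_{2p+2}$ rather than ${\mathcal L}_{2p}$ forces us to work inside ${\mathcal L}_{2p-1}$ and use Lemma~\ref{Harrisprev}. First I would observe that $I_\mu$ and $I_n$ are both linear functionals on ${\mathcal L}_{2p-1}$, so by Lemma~\ref{Harrisprev} it suffices to check \eqref{Harris3} for products $\psi=l_1 l_2$ with $l_1\in{\mathcal L}_{p-1}$ and $l_2\in{\mathcal L}_p$. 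Fixing such a product, I would apply hypothesis \eqref{Harris1} to the factor $l_2\in{\mathcal L}_p$, writing $l_2=\sum_{i=1}^n l_2(x_i,y_i)\zeta_i+\eta$ with $\eta\in{\mathcal S}_p$, and then multiply through by $l_1$, so that $\psi=l_1 l_2=\sum_i l_2(x_i,y_i)\,l_1\zeta_i+l_1\eta$.

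The next step is to integrate this identity against $\mu$ and evaluate each piece. For the remainder term, $\langle l_1\eta\rangle_\mu=\langle l_1,\eta\rangle$ vanishes because $l_1\in{\mathcal L}_{p-1}$ and $\eta\in{\mathcal S}_p\perp{\mathcal L}_{p-1}$ (here I would note that the definition of ${\mathcal S}_p$ in the statement is missing ``$=0$'' after $\langle\eta,\chi\rangle$, and I would read it as ${\mathcal S}_p=\{\eta\in{\mathcal L}_p:\langle\eta,\chi\rangle=0\ \forall\chi\in{\mathcal L}_{p-1}\}$). For the main sum I would use \eqref{Harris2}:
\begin{equation}\label{HarrisproofA}
I_\mu(l_1\zeta_i)=\langle l_1,\zeta_i\rangle=\lambda_i\,\langle l_1,{\mathcal K}_{p-1}(\cdot,\cdot,x_i,y_i)\rangle+\langle l_1,\eta_i\rangle.
\end{equation}
Again $\langle l_1,\eta_i\rangle=0$ since $\eta_i\in{\mathcal S}_p$, while the reproducing property of ${\mathcal K}_{p-1}$ (valid because $l_1\in{\mathcal L}_{p-1}$) gives $\langle l_1,{\mathcal K}_{p-1}(\cdot,\cdot,x_i,y_i)\rangle=l_1(x_i,y_i)$. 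Hence $I_\mu(l_1\zeta_i)=\lambda_i\,l_1(x_i,y_i)$.

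Combining, $I_\mu(\psi)=\sum_{i=1}^n l_2(x_i,y_i)\,\lambda_i\,l_1(x_i,y_i)=\sum_{i=1}^n\lambda_i\,\psi(x_i,y_i)=I_n(\psi)$, which is \eqref{Harris3} for $\psi=l_1l_2$; linearity and Lemma~\ref{Harrisprev} then extend it to all of ${\mathcal L}_{2p-1}$. The steps that require genuine care, rather than the obstacle being deep, are: justifying that the reproducing property of ${\mathcal K}_{p-1}$ applies to the \emph{scalar} function $l_1\in{\mathcal L}_{p-1}$ (the property stated after \eqref{eq-ker} is for $\psi\in{\mathcal L}_n$, and since every such $\psi$ is a linear combination of the components of $\phi_0,\dots,\phi_{p-1}$ this is immediate); and being explicit that the decomposition $\psi=l_1l_2$ is the whole point where the Laurent degree identity \eqref{crelations} --- equivalently Lemma~\ref{Harrisprev} --- is invoked, since without it a generic element of ${\mathcal L}_{2p-1}$ need not factor, and indeed would land in ${\mathcal L}_{2p+2}$ if one naively multiplied two elements of ${\mathcal L}_p$. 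I would flag Lemma~\ref{Harrisprev} (hence ultimately \eqref{crelations}) as the only non-routine ingredient; everything else is the orthogonality $l_1\perp{\mathcal S}_p$ and the reproducing property used twice.
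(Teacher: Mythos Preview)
Your proof is correct and follows essentially the same approach as the paper: both reduce via Lemma~\ref{Harrisprev} to products $l_1l_2$ with $l_1\in{\mathcal L}_{p-1}$, $l_2\in{\mathcal L}_p$, then combine \eqref{Harris2} with the reproducing property and $l_1\perp{\mathcal S}_p$ to get $\langle l_1,\zeta_i\rangle=\lambda_i l_1(x_i,y_i)$, and finally apply \eqref{Harris1} to $l_2$ and integrate against $l_1$. Your observation about the missing ``$=0$'' in the definition of ${\mathcal S}_p$ is correct and matches the paper's intended meaning.
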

\begin{proof}
Let $\psi_1 \in {\mathcal L}_{p-1}$. From the reproducing property, \eqref{Harris2} and $\langle \psi_1 , \eta_i \rangle=0$, we have for all $i=1,\ldots,n$ that
$$
\lambda_i \psi_1(x_i,y_i)=\langle \psi_1 , \lambda_i{\mathcal K}_{p-1}(\cdot,\cdot,x_i,y_i)\rangle = \langle \psi_1 , \zeta_i - \eta_i \rangle = \langle \psi_1 , \zeta_i \rangle.
$$
If $\psi_2 \in {\mathcal L}_{p}$ it follows from \eqref{Harris1} that
$$
\langle \psi_1 , \psi_2 \rangle=\sum_{i=1}^{n} \psi_2(x_i,y_i)\langle \psi_1,\zeta_i \rangle + \langle \psi_1,\eta\rangle = \sum_{i=1}^{n} \lambda_i \psi_1(x_i,y_i)\psi_2(x_i,y_i).
$$
So, \eqref{Harris3} holds for functions of the form $\psi_1 \psi_2$ and from Lemma \ref{Harrisprev}, it holds for all $\psi \in {\mathcal L}_{2p-1}$.
\end{proof}

It is a complicated problem to determine when a family of Lagrange Laurent polynomials exists, but nevertheless, this Lemma allowed Harris, in the ordinary polynomial case, to construct two types of particular cubature formulas. In Harris' paper, this lemma is an equivalence, although what is actually used to construct the cubature formulas is only this implication (see Sections 5 and 6 in \cite{harris}). It remains an open problem whether our generalization to the Laurent case can also allow the construction of particular L-Gaussian cubature formulas.

Some recent papers present alternative approaches to the construction of cubature formulas that are also of interest. For example, a characterization of cubature through Hankel operators is carried out in \cite{CoHu}. Algorithms to compute all minimal cubatures for a given domain and a given degree are elaborated there from moment theory. Also in \cite{Gau}, two new classes of stable high-order (with nonnegative weights) cubature rules are proposed by making use of a sufficiently large number of nodes (larger than the number of basis functions for which the cubature is exact) and that do not lie on an algebraic manifold generated by a polynomial of the same degree as the degree of accuracy of the cubature rule. This yields a linear system of equations and cubature weights are selected from the space of solutions minimizing certain norms related to the stability of the procedure.

\subsection{Multivariate strong moment problems}\label{Appl2}

In the univariate case, the order of a quadrature formula corresponds to matching some of the moments of the measure. The converse is also a problem of interest: given an infinite sequence of moments, is it possible to recover the underlying measure? Orthogonal polynomials and the analysis of the convergence of quadrature formulas are connected with the solutions of this problem. If a solution exists, it is of interest also to characterize all the possible measures that solve the moment problem (indeterminate case) and to obtain conditions such that the solution is unique (determinate case). There are several classical moment problems associated with polynomials (see \cite{Akh,Sho}): starting from a sequence $\{ \mu_k \}_{k\geq 0}$, to find conditions such that there exists a positive Borel measure $\mu$ satisfying $\int z^kd\mu(z)=\mu_k$, $k\in \NN\cup \{ 0 \}$. The most frecuently studied situations are those where the support of the measure is a compact interval (Hausdorff, see \cite{Hau}), the half-line (Stieltjes, see \cite{Sti}) and the whole real line (Hamburger, see \cite{Ham1,Ham3}).

An alternative to this problem is to use Laurent polynomials instead of ordinary polynomials, considering the sequence of moments $\{ \mu_k \}_{k\in \ZZ}$. The problem in this case is usually called {\em strong moment problem}, see \cite{BH,CDM2,CDM4,JTW,ON1,ON2}. This variant is linked to orthogonal Laurent polynomials and the construction of quadrature formulas exact in subspaces of Laurent polynomials, see e.g. these references.

The multidimensional moment problem has been also considered in the literature, see e.g. \cite{Berg,Fu,Havil1,Havil2,Xu2}. As indicated in \cite{Xu} (Section 3.2.3), the moment problem in several variables is much more difficult than its one-variable counterpart, and it is still not completely solved. However, some characterizations and sufficient conditions for a sequence to be determinate have been obtained (see e.g. these references for further details).

Due to the absence of a Theory of Orthogonal Laurent Polynomials in several variables up to now in the literature, {\em multivariate strong moment problems} have not been considered yet. In this respect, our results may represent a starting point to develop this theory, being key to this to have obtained a way to order the Laurent monomials in a suitable way, to have considered the multiplication plus inverse multiplication operator on each variable and to have deduced recurrence relations and the corresponding Favard theorem.

\section{Conclusions}\label{SecConc}

We have introduced for the first time in the literature the theory of sequences of orthogonal Laurent polynomials in two real variables $(x,y)$ (for the sake of simplicity, but it can be generalized to several variables) with respect to a positive Borel measure $\mu$ defined on $\RR^2$ such that $\{ x=0 \}\cup \{ y=0 \} \not\in \textrm{supp}(\mu)$. We have considered an appropriate ordering for the Laurent monomials $x^{i}y^{j}$, $i,j \in \ZZ$ that lets us obtain five term relations involving multiplication by $x+\frac{1}{x}$ and $y+\frac{1}{y}$. The corresponding matrices representations of these operators are block symmetric five diagonals. Our approach enables us to extend some known results for the ordinary polynomials to the Laurent case. In this respect, we have included a Favard's theorem and Christoffel-Darboux and confluent formulas. Also, a connection with the one variable case is done when the measure $\mu$ is a product measure of separate variables defined on the rectangle ${\mathcal R}=[a,b] \times [c,d]$, $0<a<b<\infty$, $0<c<d<\infty$.

In the one variable case, there are very few measures that give rise to explicit expressions for sequences of orthogonal Laurent polynomials. We could almost say that the only ones are practically the weight functions $\{ \omega_i \}_{i=1}^{6}$ mentioned at the end of Section 4. In general, these families are computed making use of Theorem \ref{recu1var}, under the knowledge of the corresponding moments. In the several variables case, there is not any sequence of orthogonal Laurent polynomials explicitly known, except in the situations described in Section 4. However, these families can always be obtained from the five-term relations obtained in Section 2 as long as the moments (\ref{moments}) exist and are computable. Also, unlike the situation in the one variable case, there are not known applications in the literature for the moment of orthogonal Laurent polynomials in two or more real variables. The introduction in this paper of the theory of orthogonal Laurent polynomials of two real variables allows us to extend results known for the univariate case to the multivariate. We have included a section with two applications for future research. In particular, cubature rules based on Laurent polynomials will be considered in a forthcoming paper.

\section{Acknowledgments}

The work of the first author is partially supported by IMAG-Mar\'ia de Maeztu grant CEX2020-001105-M.

\bibliography{biblio}
\bibliographystyle{plain}

\end{document}